\newcommand{\dsp}{\displaystyle}
\newcommand{\eps}{\varepsilon}
\newcommand{\om}{\omega}
\newcommand{\Om}{\Omega}
\newcommand{\mrm}[1]{\mathrm{#1}}
\newcommand{\Cplx}{\mathbb{C}}
\newcommand{\N}{\mathbb{N}}
\newcommand{\R}{\mathbb{R}}
\newcommand{\mL}{\mrm{L}}
\newcommand{\mH}{\mrm{H}}
\newcommand{\mX}{\mathscr{S}}
\renewcommand{\dim}{\mrm{dim}}
\newtheorem{theorem}{Theorem}[section]
\newtheorem{remark}[theorem]{Remark}
\newtheorem{definition}[theorem]{Definition}
\newtheorem{proposition}[theorem]{Proposition}
\begin{document}

~\vspace{-0.6cm}
\begin{center}

{\sc \bf\fontsize{20}{20}\selectfont 
Eigenvalue falls in thin broken quantum strips} 
 
\end{center}

\begin{center}
\textsc{Lucas Chesnel}$^1$, \textsc{Sergei A. Nazarov}$^{2,3}$\\[16pt]
\begin{minipage}{0.86\textwidth}
{\small
$^1$ Inria, Ensta Paris, Institut Polytechnique de Paris, 828 Boulevard des Mar\'echaux, 91762 Palaiseau, France;\\
$^2$ St. Petersburg Department of Steklov Mathematical Institute of RAS, 27, Fontanka, 191023 St.Petersburg, Russia.\\
E-mails: \texttt{lucas.chesnel@inria.fr}, \texttt{srgnazarov@yahoo.co.uk}, \texttt{serna108@mail.ru}\\[-14pt]
\begin{center}
(\today)
\end{center}
}
\end{minipage}

\vspace{0.2cm}

\begin{minipage}{0.88\textwidth}
\noindent\textbf{Abstract.} We are interested in the spectrum of the Dirichlet Laplacian in thin broken strips with angle $\alpha$. Playing with symmetries, this leads us to investigate spectral problems for the Laplace operator with mixed boundary conditions in trapezoids of thickness $\eps>0$ small. We give an asymptotic expansion of the first eigenvalues and corresponding eigenfunctions as $\eps$ tends to zero. The new point in this work is to study the dependence with respect to $\alpha$. We highlight a curious phenomenon of diving eigenvalues: when the strip is more and more broken, at certain critical angles, that we characterize, an eigenvalue moves down very rapidly below the pack of other eigenvalues. We prove that this occurs more gently at $\alpha=0$ than at positive critical angles.\\[4pt]
\noindent\textbf{Key words.} Quantum waveguide, multiscale spectral problem, thin broken strips, dimension reduction, threshold scattering matrix, threshold resonance. 
\end{minipage}
\end{center}

\section{Introduction}

\begin{figure}[!ht]
\centering
\begin{tikzpicture}[scale=0.9]
\filldraw[fill=gray!20] (0,0) -- ++(8,0) -- ++(0,0.5) -- ++(-8,0)--cycle;
\draw[black,<->] (7.4,0)--(7.4,0.5);
\node at (7.4+0.2,0.25) {\small $\eps $};
\node at (0,-0.3) {\small $-1$};
\node at (8,-0.3) {\small $1$};
\node at (4,-0.3) {\small $O$};
\end{tikzpicture}\qquad 
\begin{tikzpicture}[scale=0.9]
\filldraw[fill=gray!20] (0,0) -- ++(4,0) -- ++(0,0.5) -- ++({0.5*sin(10)-4},0)--++({-4*cos(10)},{4*sin(10)})--++({-0.5*sin(10)},{-0.5*cos(10)})--cycle;
\draw[black,<->] (3.4,0)--(3.4,0.5);
\node at (3.4+0.2,0.25) {\small $\eps $};
\node at (4,-0.3) {\small $1$};
\node at (0,-0.3) {\small $O$};
\end{tikzpicture}\vspace{-0.2cm}
\caption{Thin reference (left) and broken (right) strips. \label{GeomBrokenStrip}}  
\end{figure}
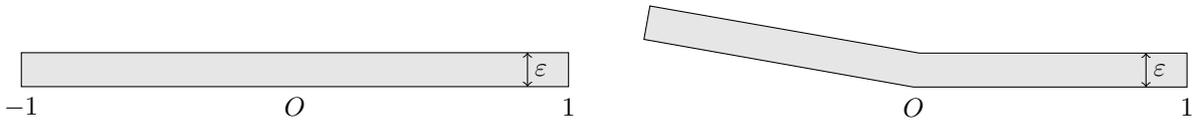

With the recent developments in the study of graphene which can be used to design devices with extraordinary properties \cite{GeNo07,Geim09,NGPNG09}, important efforts have been made to understand wave propagation phenomena in quantum waveguides in the mathematical community. In certain simple situations, this leads one to investigate the properties of the Laplace operator with Dirichlet boundary conditions in thin domains. \\
\newline
Many works concern the case where the geometry has a thin transverse section and is unbounded in some longitudinal direction. In these multiscale waveguides, because waves cannot radiate as easily as in free space, it is known that trapped modes, also called bound states, can exist. They are defined as non trivial solutions to the homogeneous problem which are of finite energy, \textit{i.e.} in the Sobolev space $\mH^1$. They have been the object of intense studies first because in general one wants to avoid their unpleasant effects. But one can also search specifically for bounded states in the continuous spectrum \cite{SaBR06,Mois09,GPRO10} because when slightly perturbed, they give rise to complex resonances with small imaginary parts in the vicinity of which one observes interesting versatile scattering phenomena (see the literature concerning the so-called Fano resonance \cite{ShTu12,ChNa18}). In this context, it has been shown in particular that as soon as a straight waveguide is bent \cite{ExSe89,DuEx95} or broken with an angle \cite{ExL,ABGM91,CLMM93}, eigenvalues appear below the continuous spectrum of the Dirichlet Laplacian.\\ 
\newline
On the other hand, some authors have focus their attention on the derivation of reduced models to describe wave propagation in latices made of thin ligaments, which are particular elements of the family of multiscale domains. This has given rise to a large body of works, among them \cite{Kuch02,Post05,MoVa07,Grie08,Post12,BeKu13,Naza14,ExKo15,NaRU15,Pank17}. 
The limit problems involve ODEs in 1D ligaments connected at some junctions points. In this field, the main difficulty lies in writing relevant transmission conditions at the nodes of the network. For the Neumann Laplacian, it has been known for a long time that Kirchhoff transmission conditions, that is continuity of the field and zero outgoing flux, must be imposed. Things are more subtle for the Dirichlet Laplacian. In general, a homogeneous Dirichlet condition must be imposed, so that information cannot propagate in the structure. But in certain particular situations, this depends on the spectral properties of the operator obtained by zooming at the junction and more precisely on the existence of so-called threshold resonances (see Definition \ref{DefTR} below), the relevant law is a transmission condition, for example Kirchhoff or anti-Kirchhoff.\\
\newline
The goal of the present article is to cross the different studies mentioned above to describe the features of the spectrum of the Dirichlet Laplacian in thin bounded broken strips characterized by a parameter $\eps>0$ small as depicted in Figure \ref{GeomBrokenStrip}. First, due to the Dirichlet boundary conditions, all eigenvalues move to $+\infty$ as $O(\eps^{-2})$ when $\eps$ tends to zero. But more originally, we prove that for a fixed small $\eps$, when the angle of the strip varies continuously, for certain values, an eigenvalue dives, \textit{i.e.} moves down very rapidly, below the normalized threshold $\pi^2/\eps^2$. \\
\newline
Concerning the analysis of the spectrum of the Dirichlet Laplacian in other thin bounded domains, one may consult \cite{KaNa00,Frei07,FrSo09,BoFr09}. Finally let us mention that the Dirichlet Laplacian in 2D domains $\om$ also appears when one is interested in the Maxwell's equations in 3D geometries of the shape $\om\times(0;c)$, $c>0$ (see \cite{GoJa92,CLMM93,BoCF25}).\\
\newline
The outline is as follows. In Section \ref{SectionSetting}, we start by defining the geometry and introduce the notation. Section \ref{SectionNumerics1} is devoted to the presentation of first numerical experiments to get some intuition of the features of the spectrum of our problem. In order to perform the asymptotic analysis as $\eps$ tends to zero, in Section \ref{SectionNFPb} we detail the properties of the near field operator obtained by zooming at $O$ and which plays a key role in the analysis. Then in Section \ref{SectionAsymptoAlpha}, we state and prove the first main result of the article, namely Theorem \ref{MainThm1}, where we deal with the asymptotics of the eigenvalues as $\eps$ tends to zero at a fixed angle $\alpha$. This allows us to identify particular angles, the critical angles, where the relevant 1D limit model consists in imposing a Neumann condition at $O$ and not a Dirichlet condition as in the generic case. We complement the numerics in Section \ref{SectionNumericalResults} and explain there how to compute these critical angles. Section \ref{SectionModelPb} contains the main originality of the paper: we derive model problems describing the transition of the spectrum for angles varying around the critical angles. The results are encapsulated in Theorems \ref{ThmModel1} and \ref{ThmModel2} where we work respectively around positive critical angles and around the null angle. In Section \ref{SectionAuxRes} we prove two auxiliary results needed in the study and finally we conclude by a few remarks, listing in particular some open questions.

\section{Setting}\label{SectionSetting}

\begin{figure}[h!]
\centering
\begin{tikzpicture}[scale=1.2]
\filldraw[fill=gray!20] (0,0) -- ++(8,0) -- ++(0,1) -- ++(-7.5,0)--cycle;
\draw[->] (0,0) -- (1,0) node[anchor=north]{$x$};
\draw[->] (0,0) -- (0,1) node[anchor=east]{$y$};
\draw[<-] (0.2,0.5) --++ (-0.6,0) node[anchor=east]{$x=y\,\tan\alpha$};
\draw[<-] (0,0) ++(90:.8) arc (90:65:.8); 
\draw[black,<->] (5,0)--(5,1);
\node at (5+0.3,0.5) { $\eps $};
\node at (0.25,0.9) { $\alpha$};
\node at (4,-0.2) { $\Sigma^\eps$};
\node at (0.6,0.5) { $\Gamma^\eps$};
\node at (4,1.2) { $\Sigma^\eps$};
\node at (8+0.3,0.5) { $\Sigma^\eps$};
\node at (8,-0.2) {\small $1$};
\node at (0,-0.2) {\small $O$};
\end{tikzpicture}
\caption{Trapezoid $\mathrm{T}^\eps$. \label{GeomTrapezoid}}
\end{figure}

\noindent Our work is motivated by the analysis of the spectrum of the Laplace operator with Dirichlet Boundary Condition (BC) in finite length thin broken strips characterized by a parameter $\eps>0$ small as displayed in Figure \ref{GeomBrokenStrip}. In particular, we are interested in describing the behavior of the first eigenvalues and corresponding eigenfunctions as $\eps>0$ tends to zero. Exploiting the symmetry of the problem, one can reduce the study to that of the spectrum of the Laplace operator with mixed BCs in the geometry of Figure \ref{GeomTrapezoid}. More precisely, for $\alpha\in(-\pi/2;\pi/2)$ and $\eps>0$, define the trapezoid
\begin{equation}\label{DefTrapezoid}
\mathrm{T}^\eps \coloneqq \{ z\coloneqq(x,y)\in \R^2\,|\,y\in(0;\eps)\mbox{ and }x\in (y\tan\alpha;1)\}.
\end{equation}
Note that for $\alpha=0$, $\mathrm{T}^\eps$ coincides with the rectangle $(0;1)\times(0;\eps)$. Let us give notation to the different components of the boundary $\partial\mathrm{T}^\eps$ of the domain (\ref{DefTrapezoid}). Set 
\[
\Sigma^\eps\coloneqq \{ z\in\partial\mathrm{T}^\eps\,|\,y = 0\mbox{ or }y=\eps\mbox{ or }x=1\}\quad \mbox{ and }\quad \Gamma^\eps \coloneqq \partial\mathrm{T}^\eps\setminus\overline{\Sigma^\eps}=\{ (y\tan\alpha,y)\,|\,y\in(0;\eps)\}.
\]
From time to time below, we will indicate the dependence with respect to $\alpha$, writing for example $\mathrm{T}^\eps(\alpha)$, $\Sigma^\eps(\alpha)$, $\Gamma^\eps(\alpha)$. We study the spectral problem with mixed BCs
\begin{equation}\label{MainProblem}
\begin{array}{|rcll}
- \Delta u &=& \lambda u & \quad\mbox{in }\mathrm{T}^\eps\\[3pt]
u &=& 0 & \quad\mbox{on }\Sigma^\eps \\[3pt]
\partial_n u  &=& 0 & \quad\mbox{on }\Gamma^\eps,
\end{array}
\end{equation}
where $\partial_n$ is the outward normal derivative on $\partial\mathrm{T}^\eps$. Note that on $\Gamma^\eps$, since $n=(-\cos\alpha,\sin\alpha)^\top$, there holds 
\[
\partial_n u=-\cos\alpha\,\partial_xu+\sin\alpha\,\partial_xu.
\]
Let us mention that the problem with Dirichlet BC on $\Gamma^\eps$ is less attractive because in this case the near field operator (see $A^\Om$ in (\ref{NearFieldOp})) has no discrete spectrum, that annuls interesting effects. Denote by $\mH_0^1(\mathrm{T}^\eps; \Sigma^\eps)$ the Sobolev space of functions in $\mH^1(\mathrm{T}^\eps)$ vanishing on $\Sigma^\eps$. Classically (see \textit{e.g.} \cite{Lad,LiMa68}), the variational formulation  of Problem (\ref{MainProblem}) writes
\begin{equation} \label{FV}
\begin{array}{|l}
\mbox{ Find }(\lambda,u)\in\R\times \mH_0^1(\mathrm{T}^\eps ; \Sigma^\eps)\setminus\{0\}\mbox{ such that}\\[5pt]
\,\dsp\int_{\mathrm{T}^\eps} \nabla u\cdot\nabla v\,dz = \lambda\,\int_{\mathrm{T}^\eps} u   v\,dz
\qquad\forall v \in \mH_0^1(\mathrm{T}^\eps ; \Sigma^\eps).
\end{array}
\end{equation}
As known for example from \cite[\S10.1]{BiSo87} or \cite[Chap.\,VIII.6]{RS78}, the variational problem (\ref{FV}) gives rise to the unbounded operator $A^\eps$ of $\mL^2(\mathrm{T}^\eps)$ such that
\begin{equation}\label{DefAeps}
\begin{array}{rccl}
A^\eps:&\mathcal{D}(A^\eps)&\quad\to&\quad\mL^2(\mathrm{T}^\eps)\\[6pt]
 & u & \quad\mapsto&\quad A^\eps u=-\Delta u,
\end{array}
\end{equation}
with $\mathcal{D}(A^\eps)\coloneqq\{u\in\mH_0^1(\mathrm{T}^\eps ; \Sigma^\eps)\,|\,\Delta u\in\mL^2(\mathrm{T}^\eps)\mbox{ and }\partial_n u=0\mbox{ on }\Gamma^\eps\}$. By applying the classical theory of regularity of solutions to elliptic problems in domains with singular geometries, see \textit{e.g.} \cite{Kond67,KoMR97}, one can characterize explicitly the domain $\mathcal{D}(A^\eps)$ (see (\ref{CalculSingularites}) for more details). The operator $A^\eps$ is positive definite and selfadjoint. Since $\mathrm{T}^\eps$ is bounded, the embedding $\mH_0^1(\mathrm{T}^\eps ; \Sigma^\eps)\subset \mL^2(\mathrm{T}^\eps)$ is compact and the spectrum of $A^\eps$ is discrete, made of the sequence of normal eigenvalues
\begin{equation}\label{DefEigenfunctions}
0<\lambda^\eps_1 < \lambda^\eps_{2} \le \dots \le \lambda^\eps_p \le \dots \quad\to +\infty,
\end{equation}
where the $\lambda^\eps_p$ are counted according to their multiplicity. Note that the Krein-Rutman theorem (see \textit{e.g.} \cite[Thm.\,1.2.5]{Henr06}) ensures that $\lambda^\eps_1$ is a simple eigenvalue. The corresponding eigenfunctions $u^\eps_p\in
\mH_0^1(\mathrm{T}^\eps ; \Sigma^\eps)$ can be subject to the orthogonality and normalization conditions 
\[
\int_{\mathrm{T}^\eps} u^\eps_p u^\eps_q\,dz =\delta_{p,q}\qquad\forall p,q\in\N^\ast\coloneqq\{1,2,3,\dots\},
\]
where $\delta_{p,q}$ is the Kronecker symbol. The general purpose of this work is to describe the behavior of the eigenpairs $(\lambda^\eps_p,u^\eps_p)$ with respect to the angle $\alpha$ for small values of $\eps$. 

\section{First numerical experiments}\label{SectionNumerics1}
In this section, we present some numerics to get an intuition of the dependence of the spectrum of $A^\eps$ with respect to $\eps$. In the case $\alpha=0$, we can use decomposition in Fourier series. For the first eigenvalues, we find  
\begin{equation}\label{CaseAlphaNull}
\lambda^\eps_p=\pi^2/\eps^2+(p+1/2)^2\pi^2,\quad p\in\N,\quad\mbox{ with }\quad u^\eps_p(x,y)=2\cos(\pi(p+1/2)x)\sin(\pi y/\eps)/\sqrt{\eps}.
\end{equation}
In particular, the eigenfunctions decompose on the fundamental mode in the transverse direction and can have several oscillations in the longitudinal direction. Notice that all eigenvalues move to $+\infty$ as $O(\eps^{-2})$ when $\eps$ tends to zero. This is expected in this thin geometry due to the Dirichlet conditions on the horizontal parts of the boundary and will be true for all $\alpha\in(-\pi/2;\pi/2)$. To get an idea of what happens for $\alpha\ne0$, we can solve numerically (\ref{MainProblem}), for example by using a finite element method. 

\begin{figure}[!ht]
\centering
\includegraphics[width=11cm]{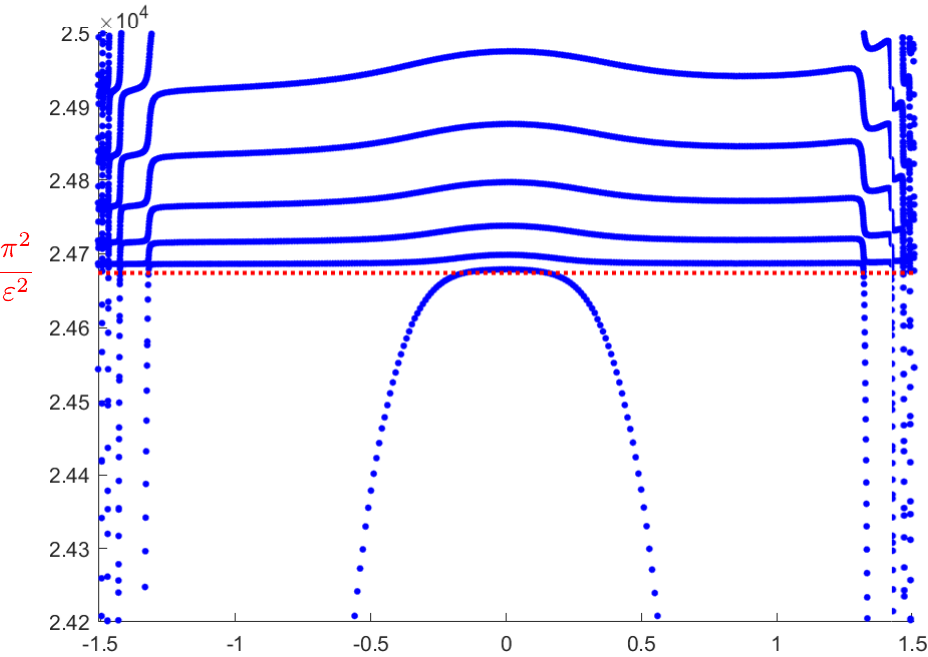}\vspace{-0.3cm}
\caption{Spectrum of $A^\eps$ with respect to $\alpha\in(-0.48\pi;0.48\pi)$ for $\eps=0.02$. The horizontal red dotted line corresponds to the normalized threshold $\pi^2/\eps^2$. \label{SpectrumVsAlpha}}
\end{figure}
\begin{figure}[!ht]
\centering
\includegraphics[width=11cm]{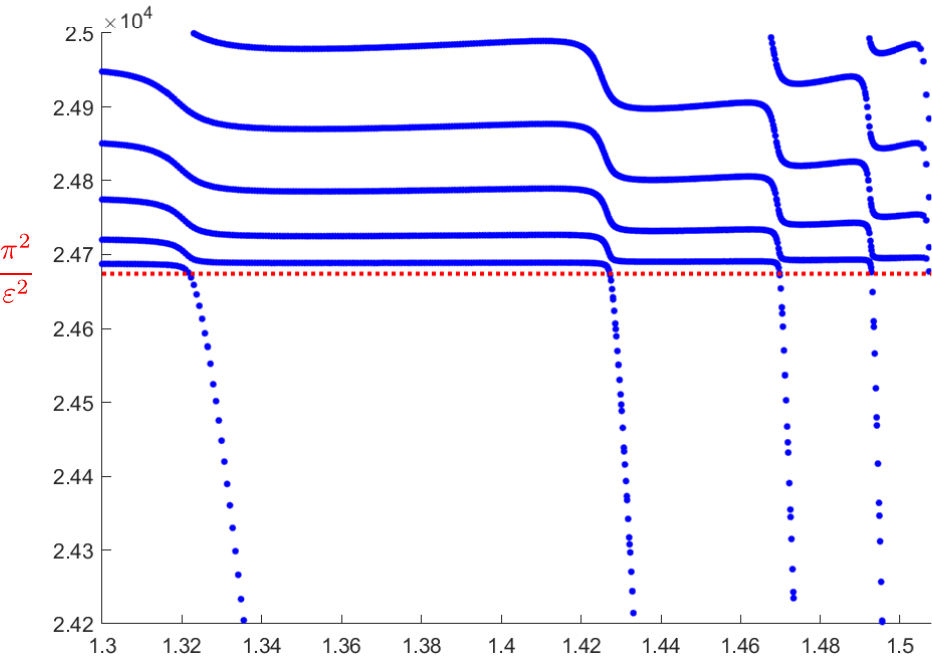}\vspace{-0.3cm}
\caption{Same quantities as in Figure \ref{SpectrumVsAlpha} with a zoom for $\alpha\in(1.3;0.48\pi)$. \label{SpectrumVsAlphaZoom}}
\end{figure}

\noindent In Figure \ref{SpectrumVsAlpha}--\ref{SpectrumVsAlphaZoom}, we display the first eigenvalues of (\ref{MainProblem}) obtained with \texttt{Freefem++} (see  \cite{Hech12}) with respect to $\alpha\in(-0.48\pi;0.48\pi)$ for a fixed small $\eps$ (we take $\eps=0.02$). First, we observe that the spectrum is approximately the same if we replace $\alpha$ by $-\alpha$. This is due to the fact that as $\eps$ tends to zero, $\mathrm{T}^\eps(\alpha)$ and $\mathrm{T}^\eps(-\alpha)$ are asymptotically congruent. For this reason, in the following we shall assume that $\alpha\ge0$. We also note that at certain particular $\alpha$, an eigenvalue dives below $\pi^2/\eps^2$, a quantity that what we will call the normalized threshold and which is marked by the horizontal red dotted line (see in particular around $\alpha=1.32$ in Figure \ref{SpectrumVsAlphaZoom}). The first goal of this work is to explain this property. Additionally, we remark that the second  eigenvalue which dives below $\pi^2/\eps^2$ when $\alpha>0$ increases, dives much more rapidly than the first one. This is also a point that we want to understand.\\ 
\newline
Finally in Figure \ref{FigEigenfunctions}, we display eigenfunctions associated with the five first eigenvalues of $A^\eps$ again for $\eps=0.02$ and for three different values of $\alpha$. Depending on $\alpha$, we see that we can have or not eigenfunctions which are localized in a neighborhood of the tip of the trapezoid. We will explain why below. 

\begin{figure}[!ht]
\centering
\includegraphics[width=15cm]{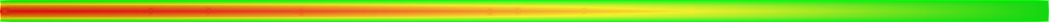}\\[-1pt]\includegraphics[width=15cm]{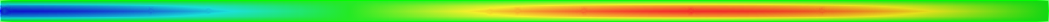}\\[-1pt]
\includegraphics[width=15cm]{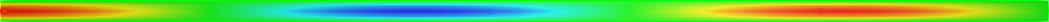}\\[-1pt]
\includegraphics[width=15cm]{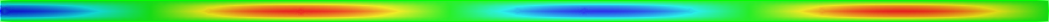}\\[-1pt]
\includegraphics[width=15cm]{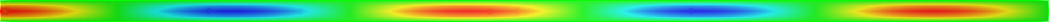}\\[12pt]
\includegraphics[width=15cm]{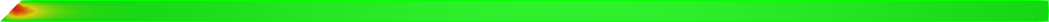}\\[-1pt]\includegraphics[width=15cm]{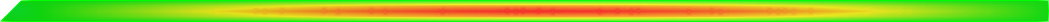}\\[-1pt]
\includegraphics[width=15cm]{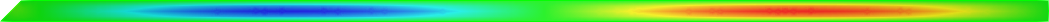}\\[-1pt]
\includegraphics[width=15cm]{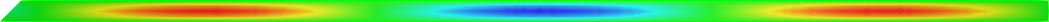}\\[-1pt]
\includegraphics[width=15cm]{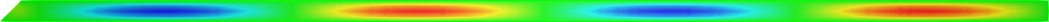}\\[12pt]
\includegraphics[width=15cm]{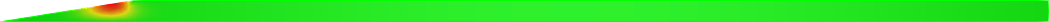}\\[-1pt]\includegraphics[width=15cm]{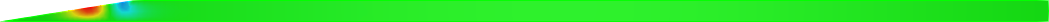}\\[-1pt]
\includegraphics[width=15cm]{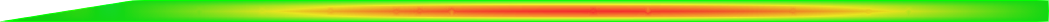}\\[-1pt]
\includegraphics[width=15cm]{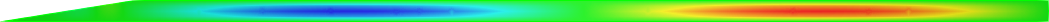}\\[-1pt]
\includegraphics[width=15cm]{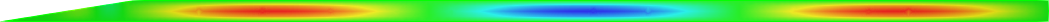}
\caption{Eigenfunctions associated with the 5 first eigenvalues of $A^\eps$ for $\eps=0.02$. Top: $\alpha=0$, middle: $\alpha=\pi/4$, bottom: $\alpha=0.45\pi\approx 1.41$.\label{FigEigenfunctions}}
\end{figure}

\noindent Our analysis will be based on the derivation of an asymptotic expansion of the eigenvalues and eigenfunctions of $A^\eps$ as $\eps$ tends to zero, allowing us to explicit the dependence with respect to the angle $\alpha$. To proceed, following the articles concerning dimension reduction in almost 1D quantum graphs, see \textit{i.e.} \cite{Grie08,Naza14,ChNa23}, we shall work with the technique of matched asymptotic expansions. Roughly speaking, we will propose different representations for the eigenfunctions of $A^\eps$, far and near the origin, that we will match in some intermediate region. In the next section, we present several objects that will be useful to define the near field expansion of the eigenfunctions of $A^\eps$.

\section{Near field operator}\label{SectionNFPb}

\begin{figure}[!ht]
\centering
\begin{tikzpicture}
\filldraw[fill=gray!20,draw=none] (0,0) -- ++(6,0) -- ++(0,2) -- ++(-5,0)--cycle;
\draw (6,0) -- ++(-6,0) -- ++(1,2) -- ++(5,0);
\draw[dashed] (6,0)-- ++(0,2);
\draw[->] (0,0) -- (1,0) node[anchor=north]{$X$};
\draw[->] (0,0) -- (0,1) node[anchor=east]{$Y$};
\draw[<-] (0.6,1.4) --++ (-1.2,0) node[anchor=east]{$X=Y\,\tan\alpha$};
\draw[<-] (0,0) ++(90:.6) arc (90:65:.6); 
\node at (0.2,0.8) {$\alpha$};
\draw[black,<->] (5,0)--(5,2);
\node at (5+0.3,1) { $1$};
\node at (3,-0.3) { $\Sigma$};
\node at (0.9,1) { $\Gamma$};
\node at (3,2.2) { $\Sigma$};
\end{tikzpicture}
\caption{Near field geometry $\Om$.}\label{GeomInnerField}
\end{figure}
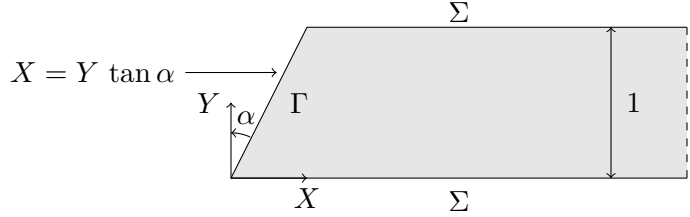

\noindent In a neighborhood of the origin, we expect the eigenfunctions of $A^\eps$ to exhibit fast variations. To catch them, we will work with the rapid variables
\[
z/\eps=(x/\eps,y/\eps),
\]
which allow us to zoom at $O$. Then, as $\eps$ tends to zero,
we are led to consider the unbounded geometry 
\[
\Om \coloneqq \{ Z\coloneqq(X,Y)\in \R^2\,|\,Y\in(0;1)\mbox{ and }X\in (Y\tan\alpha;+\infty)\}
\]
(see Figure \ref{GeomInnerField}) and examine the spectral problem 
\begin{equation}\label{PbSpectralZoom}
\begin{array}{|rcll}
-\Delta U&=&\mu U &\mbox{ in }\Om\\[3pt]
U&=&0&\mbox{ on } \Sigma\coloneqq\{ Z\in\partial\Om\,|\,Y = 0\mbox{ or }Y=1\} \\[3pt]
\partial_n U  &=& 0 & \mbox{ on }\Gamma\coloneqq\partial\Om\setminus\overline{\Sigma}.
\end{array}
\end{equation}
Note that (\ref{PbSpectralZoom}) is independent of $\eps$. Let $\mH_0^1(\Om ; \Sigma)$ stand for the Sobolev space of functions of $\mH^1(\Om)$ vanishing on $\Sigma$. We denote by $A^\Om$ the unbounded, positive definite, selfadjoint operator associated with (\ref{PbSpectralZoom}) defined in the Hilbert space $\mL^2(\Om)$, with the domain 
\begin{equation}\label{NearFieldOp}
\mathcal{D}(A^\Om)\coloneqq\{U\in\mH_0^1(\Om ; \Sigma)\,|\,\Delta U\in\mL^2(\Om)\mbox{ and }\partial_n U=0\mbox{ on }\Gamma\}.
\end{equation}
As for $A^\eps$ in (\ref{DefAeps}), one can explicit the singularity of the functions of $\mathcal{D}(A^\Om)$ at $O$ by using the theory of regularity of solutions to elliptic problems in domains with singular geometries (see the calculus in (\ref{CalculSingularites})). The continuous spectrum of $A^\Om$ coincides with the ray $[\pi^2;+\infty)$. When $\alpha = 0$ (straight end), classically one shows that the discrete spectrum of $A^\Om$, denoted by $\sigma_{\mrm{d}}(A^\Om)$, is empty. More precisely, for all $X>0$, one has the 1D Poincar\'e-Friedrichs inequality 
\[
\pi^2\int_{0}^1 U^2(X,Y)\,dY \le \int_{0}^1 (\partial_YU(X,Y))^2\,dY,\qquad\forall U\in\mH_0^1(\Om ; \Sigma)\cap\mathscr{C}^\infty(\overline{\Om}).
\]
Integrating it with respect to $X\in(0;+\infty)$, and using the density of $\mH_0^1(\Om ; \Sigma)\cap\mathscr{C}^\infty(\overline{\Om})$ in $\mH_0^1(\Om ; \Sigma)$, we obtain
\[
\pi^2\int_{\Om} U^2\,dZ \le \int_{\Om} |\nabla U|^2\,dZ,\qquad\forall U\in\mH_0^1(\Om ; \Sigma).
\]
From the minimum principle (see \textit{e.g.} \cite[Thm.\,10.2.1]{BiSo87}), we deduce that $\sigma_{\mrm{d}}(A^\Om)=\emptyset$ because the Rayleigh quotient cannot be smaller that $\pi^2$. On the other hand, for all $\alpha \in (0;\pi/2)$, it is known from \cite{ABGM91} that $A^\Om$ has at least one eigenvalue below its continuous spectrum (see also \cite{Naza12a} for more general shapes and dimensions). Using that extending $\Om$ by reflection with respect to $\Gamma$ provides a V-shaped domain, we can exploit the results from \cite{NaSh14} (see also \cite{DaRa12} as well as the amendments in \cite{Naza14c}) to get information on $\mu_1$, the smallest eigenvalue of $A^\Om$, as well as on the multiplicity of the discrete spectrum. In particular, in these works it is shown that the function $\alpha\mapsto \mu_1(\alpha)$ is smooth and strictly decreasing on $(0;\pi/2)$. Additionally, we have 
\begin{equation}\label{defAlpha}
\lim_{\alpha \to 0^+ } \mu_1(\alpha) =  \pi^2,\qquad
\lim_{\alpha \to (\pi/2)^-} \mu_1(\alpha) =  \frac{\pi^2}{4}\qquad\mbox{ and }\qquad\lim_{\alpha \to (\pi/2)^-} \#(\sigma_{\mrm{d}}(A^\Om))=+\infty,
\end{equation}
where $\#(\cdot)$ stands for the cardinality of a set. The third limit of (\ref{defAlpha}) means that the multiplicity of the discrete spectrum of $A^\Om$ can be made as large as desired for very sharp tips.
According to \cite{Grie08}, the properties of (\ref{PbSpectralZoom}) with $\mu$ coinciding with the threshold of the continuous spectrum of $A^\Om$ are determinant in the asymptotics of the eigenvalues of $A^\eps$ above the normalized threshold $\pi^2/\eps^2$ as $\eps\to0$ (see after (\ref{NearField})). This leads us to consider the problem 
\begin{equation}\label{NearFieldPb}
\begin{array}{|rcll}
\Delta U+\pi^2U&=&0&\mbox{ in }\Om\\
U&=&0&\mbox{ on } \Sigma\\[3pt]
\partial_n U  &=& 0 & \mbox{ on }\Gamma.
\end{array}
\end{equation}
To study (\ref{NearFieldPb}), adapted objects must be introduced. First, define the waves $w_0$, $w_1$ such that
\[
w_0(Z)=\sin(\pi Y),\qquad\quad  w_1(Z)=X\sin(\pi Y),
\]
which solve the equations of (\ref{NearFieldPb}) for $X>\tan\alpha$. The waves $w_0$, $w_1$ are somehow degenerated because their flux of energy through a transverse section is zero. More precisely, for $L>\tan\alpha$, if we set $\Gamma_L\coloneqq\{L\}\times(0;1)$, for $j=0,1$, one has
\[
\Im m\,\left(\int_{\Gamma_L}\partial_Xw_j\overline{w_j}\,dY\right)=0.
\]
Following \cite[Chap.\,5]{NaPl94}, this degeneration leads us to define the linear wave packets $w^{\mrm{out}}$, $w^{\mrm{in}}$ such that
\begin{equation}\label{defModeswinout}
\begin{array}{|lcl}
w^{\mrm{out}}(Z) &\hspace{-0.2cm}=& \hspace{-0.2cm}w_1(Z)- iw_0(Z) =(X- i)\sin(\pi Y)\\[5pt]
w^{\mrm{in}}(Z) &\hspace{-0.2cm}=& \hspace{-0.2cm}w_1(Z)+ iw_0(Z) =(X+ i)\sin(\pi Y).
\end{array}
\end{equation}
Here the notation ``in/out'' stands for ``incoming/outgoing''. Indeed, one finds
\[
\Im m\,\left(2\int_{\Gamma_L}\partial_Xw^{\mrm{out}}\,\overline{w^{\mrm{out}}}\,dY\right)=1,\qquad\quad \Im m\,\left(2\int_{\Gamma_L}\partial_Xw^{\mrm{in}}\,\overline{w^{\mrm{in}}}\,dY\right)=-1.
\]
Then the theory presented in \cite[Chap.\,5]{NaPl94} guarantees that Problem (\ref{NearFieldPb}) admits a solution with the expansion
\begin{equation}\label{ScaSol}
W  = w^{\mrm{in}}+\mathbb{S}\,w^{\mrm{out}}+\widetilde{W},
\end{equation}
where $\mathbb{S}\in\Cplx$ and $\widetilde{W}$ decays exponentially at infinity. In this simple situation the so-called threshold scattering matrix $\mathbb{S}$ becomes a scalar and the fact that $\mathbb{S}$ is unitary translates into the relation 
\begin{equation}\label{Sunitary}
|\mathbb{S}|=1.
\end{equation}
In other words, $\mathbb{S}$ is located on the unit circle in the complex plane. Identity (\ref{Sunitary}) is simply obtained by taking the imaginary part and the limit $L\to+\infty$ in the equation
\[
0=\int_{\Om_L}(\Delta W+\pi^2 W)\overline{W}\,dZ=\int_{\Om_L}-|\nabla W|^2+\pi^2|W|^2\,dZ+\int_{\Gamma_L}\partial_XW\,\overline{W}\,dY
\]
where $\Om_L\coloneqq\{Z\in\Om\,|\,X<L\}$. It may happen, we do not know, that (\ref{NearFieldPb}) also admits solutions which are exponentially decaying at infinity (trapped modes). In this situation, the function (\ref{ScaSol}) is not uniquely defined. However these trapped modes do not modify the value of $\mathbb{S}$ because they decay exponentially at infinity. Following \cite{MoVa08,Naza17bis,KoNS16,Naza20Threshold}, now we introduce some vocabulary. Define the vector spaces
\begin{equation}\label{DefSpaceX}
\begin{array}{lcl}
\mX&\hspace{-0.2cm}\coloneqq&\hspace{-0.2cm}\dsp\{U=C_1 w_1+C_0 w_0+\widetilde{U}\mbox{ satisfying  }(\ref{NearFieldPb})\mbox{ with }C_0,\,C_1\in\Cplx\mbox{ and }\widetilde{U}\in\mH^1(\Om)\};\\[10pt]
\mX_{\mrm{bo}}&\hspace{-0.2cm}\coloneqq&\hspace{-0.2cm}\dsp\{U= C_0 w_0+\widetilde{U}\mbox{ satisfying  }(\ref{NearFieldPb})\mbox{ with }C_0\in\Cplx\mbox{ and }\widetilde{U}\in\mH^1(\Om)\};\\[10pt]
\mX_{\mrm{tr}}&\hspace{-0.2cm}\coloneqq&\hspace{-0.2cm}\dsp\{U\in\mH^1(\Om)\mbox{ satisfying  }(\ref{NearFieldPb})\}.
\end{array}\hspace{-0.3cm}
\end{equation}
Here $\mX_{\mrm{bo}}$ is the space of bounded solutions of (\ref{NearFieldPb}) while $\mX_{\mrm{tr}}$ stands for the space of trapped modes. Generally speaking, the existence of trapped modes at the threshold is in some sense a rare phenomenon. In particular, it is unstable with respect to perturbations of the geometry. Therefore we think that there holds  $\mX_{\mrm{tr}}=\{0\}$ for most $\alpha$ though it is an open problem to prove such result. On the other hand, we do not know if we can have $\mX_{\mrm{tr}}\ne\{0\}$ for certain values of $\alpha$.

\begin{definition}\label{DefTR}
We say that $A^{\Om}$ has a Threshold Resonance (TR) if $\mX_{\mrm{bo}}\ne\{0\}$, i.e. if (\ref{NearFieldPb}) admits a non zero bounded solution.
\end{definition}

\begin{definition}
We say that $A^{\Om}$ has a proper TR if the quotient space
\[
\mX_{\dagger}\coloneqq\mX_{\mrm{bo}}/\mX_{\mrm{tr}}
\]
contains a non trivial element. This is equivalent to say that $A^{\Om}$ has a proper TR if (\ref{NearFieldPb}) admits a bounded solution which does not decay at infinity. If $\mX_{\mrm{bo}}=\mX_{\mrm{tr}}\ne\{0\}$, we say that the TR is improper. 
\end{definition}
\noindent The quotient space $\mX_{\dagger}$ is sometimes called the space of almost standing waves of (\ref{NearFieldPb}). 
From the definition of $\mX_{\dagger}$ in  (\ref{DefSpaceX}), we see that one has always $\dim\,\mX_{\dagger}\le1$. The following statement (see \cite[Thm.\,7.1]{Naza16} or \cite{Naza14} in more general contexts) provides a characterization of the dimension of $\mX_{\dagger}$. 
\begin{proposition}\label{PropoCharacterisation}
There holds
\[
\begin{array}{|ll}
\mX_{\dagger}=\{0\}&\mbox{ if }\mathbb{S}\ne-1\\[2pt]
\dim\,\mX_{\dagger}=1 &\mbox{ if }\mathbb{S}=-1.
\end{array}
\]
\end{proposition}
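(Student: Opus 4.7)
The proof is essentially algebraic: one translates the asymptotic coefficients $(C_1,C_0)$ of an element of $\mX$ into the incoming/outgoing basis, imposes the unique linear constraint coming from the scattering matrix $\mathbb{S}$, then intersects with the condition $C_1=0$ defining $\mX_{\mrm{bo}}$. Concretely, if $W = C_1 w_1 + C_0 w_0 + \widetilde W \in\mX$ with $\widetilde W\in\mH^1(\Om)$, then (\ref{defModeswinout}) gives
\begin{equation*}
C_1 w_1 + C_0 w_0 = a\,w^{\mrm{in}} + b\,w^{\mrm{out}},\qquad a=\tfrac{1}{2}(C_1 - iC_0),\quad b=\tfrac{1}{2}(C_1+iC_0).
\end{equation*}
The scattering theory of \cite{NaPl94}, applied to Problem (\ref{NearFieldPb}), shows that modulo $\mX_{\mrm{tr}}$ any solution with incoming amplitude $a$ coincides with $a\,W_+$, so that $b=\mathbb{S}\,a$. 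Substituting yields the linear constraint
\begin{equation*}
(1-\mathbb{S})\,C_1 + i\,(1+\mathbb{S})\,C_0 = 0,
\end{equation*}
which characterizes the image of the natural map $\mX/\mX_{\mrm{tr}}\to\Cplx^2$, $W\mapsto(C_1,C_0)$.

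By definition, an element of $\mX_{\mrm{bo}}$ satisfies $C_1=0$, so the constraint above reduces to $(1+\mathbb{S})\,C_0 = 0$. In the case $\mathbb{S}\ne-1$, this forces $C_0=0$; hence $W = \widetilde W \in \mH^1(\Om)$, meaning $W\in\mX_{\mrm{tr}}$, so $\mX_{\dagger}=\{0\}$. In the case $\mathbb{S}=-1$, the constraint is vacuous, so the linear form $W\mapsto C_0$ on $\mX_{\mrm{bo}}$ has kernel exactly $\mX_{\mrm{tr}}$ and descends to an injection $\mX_{\dagger}\hookrightarrow\Cplx$. Surjectivity (hence $\dim\,\mX_{\dagger}=1$) follows from the fact that for $\mathbb{S}=-1$ the solution $W_+$ in (\ref{ScaSol}) reads
\begin{equation*}
W_+ = (1+\mathbb{S})\,w_1 + i(1-\mathbb{S})\,w_0 + \widetilde W_+ = 2i\,w_0 + \widetilde W_+,
\end{equation*}
which is bounded with $C_0 = 2i \ne 0$, and therefore provides a non-trivial generator of $\mX_{\dagger}$.

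The only point requiring care is the scattering relation itself, namely that $\mX/\mX_{\mrm{tr}}$ is one-dimensional with the stated constraint. Beyond citing \cite{NaPl94}, it can be verified by a direct Green's formula: for two solutions $W,V\in\mX$, the identity $\int_{\Om_R}(W\Delta V - V\Delta W)\,dz = 0$ on the truncation $\Om_R\coloneqq\{Z\in\Om\,|\,X<R\}$, together with the BCs on $\Sigma\cup\Gamma$, reduces to the boundary contribution on $\{X=R\}$, which in the limit $R\to+\infty$ yields (using $\int_0^1\sin^2(\pi Y)\,dY=1/2$) the bilinear relation $C_0^W C_1^V - C_1^W C_0^V = 0$. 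Taking $V=W_+$, with $(C_1^V,C_0^V)=(1+\mathbb{S},\,i(1-\mathbb{S}))$, then recovers the asserted constraint and completes the proof.
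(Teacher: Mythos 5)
Your proof is correct and takes essentially the same route as the paper: both rest on $W_+$ being the generator of the one-dimensional quotient $\mX/\mX_{\mrm{tr}}$ and on reading off its leading coefficients $(C_1,C_0)=(1+\mathbb{S},\,i(1-\mathbb{S}))$, so that $W_+\in\mX_{\mrm{bo}}$ precisely when $\mathbb{S}=-1$. You usefully spell out, via the Green's-formula bilinear identity $C_0^W C_1^V - C_1^W C_0^V = 0$, the step the paper compresses into ``using energy considerations, one proves that one has always $\dim\,(\mX/\mX_{\mrm{tr}})=1$.''
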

\begin{proof}
When $\mathbb{S}=-1$, the function $W$ introduced in (\ref{ScaSol}) decomposes as
\begin{equation}\label{ProofBounded}
W  = w^{\mrm{in}}-w^{\mrm{out}}+\widetilde{W}=2iw_0+\widetilde{W}.
\end{equation}
Therefore it is an element of $\mX_{\mrm{bo}}\setminus\mX_{\mrm{tr}}$, which shows that $\dim\,\mX_{\dagger}=1$. On the other hand, using energy considerations, one proves that one has always $\dim\,(\mX/\mX_{\mrm{tr}})=1$. When $\mathbb{S}\ne-1$, $W$ belongs to $\mX\setminus\mX_{\mrm{bo}}$. This implies that $\mX_{\dagger}=\mX_{\mrm{bo}}/\mX_{\mrm{tr}}=\{0\}$ in this case. 
\end{proof}

\noindent The particular values of $\alpha$ such that $\mathbb{S}(\alpha)=-1$ will play an important role in the sequel. When $\alpha$ varies in $[0;\pi/2)$, according to 
(\ref{Sunitary}), $\mathbb{S}(\alpha)$ runs on the unit circle in the complex plane and from time to time hits the value $-1$. More precisely, one can show first that the map $\alpha\mapsto \mathbb{S}(\alpha)$ is continuous on $[0;\pi/2)$. To proceed, quite classically, one can work with smooth diffeomorphisms to transform small (geometrical) changes of $\alpha$ into small linear perturbations of an operator defined in a fixed domain. Then one can apply usual results of the perturbation theory for linear operators, see \textit{e.g.} \cite[Chap.\,7]{Kato95}, \cite[Chap.\,10]{BiSo87}, \cite[Chap.\,XII]{RS78}. Besides, we have the following statement whose proof is postponed to Section \ref{SectionAuxRes}.
\begin{proposition}\label{PropoDiscrete}
Assume that $\alpha^\star\in(0;\pi/2)$ is such that $\mathbb{S}(\alpha^\star)=-1$. Then there holds
\[
\cfrac{\partial\mathbb{S}}{\partial\alpha}(\alpha^\star)\ne0.
\]
\end{proposition} 
\noindent This result ensures that the set of values of $\alpha\in[0;\pi/2)$ such that $\mathbb{S}(\alpha)=-1$ is discrete. We know that this set is non-empty because it contains the value $0$. Indeed, for $\alpha=0$, we simply have 
\[
W(Z)= w^{\mrm{in}}-w^{\mrm{out}}=2i\sin(\pi Y)
\]
and so $\mathbb{S}(0)=-1$. Let us denote by 
\begin{equation}\label{DefAlphaP}
0=\alpha_0^\star<\alpha_1^\star<\dots<\alpha_k^\star<\dots
\end{equation}
the sequence of values of $\alpha$ such that $\mathbb{S}(\alpha^\star_k)=-1$. In the following, these $\alpha_k^\star$ will be called the critical angles. We will assume that there exist positive critical angles, a property that unfortunately we have not been able to prove. More precisely, the increase of the multiplicity of the discrete spectrum of $A^\Om$ as $\alpha$ tends to $\pi/2$ is due to detaching of an eigenvalue from the threshold $\pi^2$ at a TR. However we cannot prove that these TRs are proper. Let us emphasize that the numerics of Section \ref{SectionNumericalResults} (see in particular Figure \ref{TRPhase}) seems to indicate that positive critical angles indeed exist.

\section{Asymptotics at a fixed $\alpha$}\label{SectionAsymptoAlpha}

Now we can describe the asymptotic analysis of the eigenvalues and eigenfunctions of $A^\eps$ as $\eps$ tends to zero for a given $\alpha\in[0;\pi/2)$. For the sake of conciseness, we shall only give the main ideas.\\
\newline
Denote by 
\begin{equation}\label{DefEigenValDS}
0<\mu_1<\mu_2\le \mu_3 \le \cdots \le \mu_{N_\circ}<\pi
\end{equation}
the eigenvalues of the discrete spectrum of $A^\Om$. Set also  
\begin{equation}\label{DefNbEigTR}
N_\dagger\coloneqq\dim\,\mX_{\mrm{tr}}\in\N.
\end{equation}
Observe that the quantities $\mu_{N_\circ}$, $N_\dagger$ depend on $\alpha$ but we do not write this dependence explicitly here. In the particular case $\alpha=0$ (straight end), as already said, we have $N_\circ=N_\dagger=0$. On the other hand, there holds $N_\circ\ge1$ when $\alpha\in(0;\pi/2)$.

\begin{theorem}\label{MainThm1}
Pick $\alpha\in[0;\pi/2)$. For $p\in\N^\ast$, there are positive constants $C_p$, $\delta_p$, $\eps_p$ such that the eigenvalue $\lambda^\eps_p$ of $A^\eps$ introduced in (\ref{DefEigenfunctions}) satisfies the following estimate:\\[6pt]
$\bullet$ $\mbox{If }p\in\{1,\dots,N_\circ\}:$
\begin{equation}\label{result_thm_1}
\big|\lambda^\eps_p-\eps^{-2}\mu_p\big| \le C_p\,e^{-\delta_p\sqrt{\pi^2-\mu_p}/\eps}\qquad\forall\eps\in(0;\eps_p];\\[4pt]
\end{equation}
$\bullet$ $\mbox{If }p\in\{N_\circ+1,\dots,N_\circ+N_\dagger\}:$
\[
\big|\lambda^\eps_p-\eps^{-2}\pi^2\big| \le C_p\,e^{-\delta_p\pi\sqrt{3}/\eps}\qquad\forall\eps\in(0;\eps_p];\\[4pt]
\]
$\bullet$ $\mbox{If }p=N_\circ+N_\dagger+q,\,q\in\N^{\ast}:$
\begin{equation}\label{result_thm_3}
\begin{array}{ll}
\phantom{i}i)\mbox{ if }\alpha\ne\alpha_k^\star\mbox{ for all }k\in\N, & \big|\lambda^\eps_p-(\eps^{-2}\pi^2+q^2\pi^2)\big|\le C_p\,\eps^{\delta_p}\qquad\forall\eps\in(0;\eps_p];\\[10pt]
\phantom{}ii)\mbox{ if }\alpha=\alpha_k^\star\mbox{ for some }k\in\N, &  \big|\lambda^\eps_p-(\eps^{-2}\pi^2+(q-1/2)^2\pi^2)\big|\le C_p\,\eps^{\delta_p}\qquad\forall\eps\in(0;\eps_p].
\end{array}
\end{equation}
Here the quantities $\mu_p$, $N_\circ$ (resp. $N_\dagger$) are introduced in (\ref{DefEigenValDS}) (resp. (\ref{DefNbEigTR})) while the $\alpha^\star_k$ are the critical angles defined in (\ref{DefAlphaP}).
\end{theorem}
\noindent Let us comment this statement. First, it indicates that the asymptotics of the first eigenvalues of $A^\eps$ is directly dictated by the discrete spectrum of $A^\Om$. Second, the behavior of the next eigenvalues of $A^\eps$ depends on the geometry of $\Om$ and more specifically on the existence or absence of proper TR for $A^\Om$. Note that $C_p$ and $\eps_p$ depend on $\delta_p$. Moreover $\eps_p$, $C_p$ tend respectively to $0$, $+\infty$ as $p$ tends to $+\infty$.
\begin{proof}
Let us start with the asymptotics of the $\lambda^\eps_p$ for $p\le N_\circ$. Let $u^\eps_p$ be an eigenfunction associated with $\lambda^\eps_p$. As a first approximation when $\eps\to0$, it is natural to consider the expansions
\begin{equation}\label{DefAn}
\lambda^\eps_p=\eps^{-2}\mu_p+\dots,\qquad u^\eps_p=v(z/\eps)+\dots
\end{equation}
where $\mu_p\in(0;\pi^2)$ stands for an eigenvalue of the discrete spectrum of $A^{\Om}$ introduced in (\ref{DefEigenValDS}) and $v\in \mathcal{D}(A^\Om)$ is a corresponding eigenfunction. More precisely, using decomposition in Fourier series, for $v$ one obtains the representation, for $Z$ such that $X>\tan\alpha$,
\[
v(Z)=\sum_{m=1}^\infty a_m\,e^{-\sqrt{m^2\pi^2-\mu_p}X}\sin(m\pi Y),
\]
for some constants $a_m\in\R$, $m\in\N^\ast$. As a consequence, inserting the pair $(\eps^{-2}\mu_p,v(\cdot/\eps))$ in Problem (\ref{MainProblem}) only leaves a small discrepancy in $O(e^{-\sqrt{\pi^2-\mu_p}/\eps})$ on the boundary at $x=1$, which can be exploited to establish the error estimate (\ref{result_thm_1}). 
This asymptotic expansion explains the localization effect observed in Figure \ref{FigEigenfunctions} for the eigenfunctions associated with the first eigenvalue (resp. two first eigenvalues) in the case $\alpha=\pi/4$ (resp. $\alpha=0.45\pi$). The analysis is similar to describe the behavior of the $\lambda^\eps_p$, $p=N_\circ+1,\dots,N_\circ+N_\dagger$. \\
\newline
Now we take $p=N_\circ+N_\dagger+q$ for some $q\in\N^{\ast}$. Again, let $u^\eps_p$ be an eigenfunction associated with $\lambda^\eps_p$. To simplify the notation, let us remove the subscript ${}_p$. To capture the different scales of variations of $u^\eps$ in the domain $\mathrm{T}^\eps$, we use the method of matched asymptotic expansions (see the monographs \cite{VD,Ilin}, \cite[Chap.\,2]{MaNaPl} and others). Here it consists in working with two expansions of the field: one far from the tip with respect to a slow variable and another one with respect to a rapid variable $z/\eps$ in a small neighborhood of the slanted side. These expansions are then matched in some intermediate region to obtain a global representation of the eigenfunctions. As an approximation when $\eps\to0$ far from the tip, we consider the ans\"atze 
\begin{equation}\label{ansatzvp}
\lambda^\eps=\eps^{-2}\pi^2+\eta+\dots
\end{equation}
with 
\begin{equation}\label{ExpansionTR0_2}
u^\eps(z)=\gamma(x)\sin(\pi y/\eps)+\dots\qquad\mbox{(far field expansion)}.
\end{equation}
Here the constant $\eta$ and the function $\gamma$ are to be determined. Inserting (\ref{ansatzvp})--(\ref{ExpansionTR0_2}) into Problem (\ref{MainProblem}), we obtain 
\begin{equation}\label{farfield}
\begin{array}{|rcl}
\partial^2_x\gamma+\eta\gamma&=&0\quad\mbox{ in }I\coloneqq(0;1)\\[3pt]
\gamma(1)&=&0.
\end{array}
\end{equation}
To define $\gamma$ completely, we need to complement (\ref{farfield}) with some condition at the origin. To derive it, we match the behavior of $\gamma$ with the one of some near field expansion of $u^\eps$. More precisely, in a neighborhood of the origin we look for an expansion of $u^\eps$ in the form 
\begin{equation}\label{NearField}
u^\eps(z)=U^0(z/\eps)+\eps U'(z/\eps)+\dots \qquad\mbox{(near field expansion)}
\end{equation}
with $U^0$, $U'$ to be determined. Inserting (\ref{NearField}) and (\ref{ansatzvp}) in (\ref{MainProblem}), we find that $U^0$, $U'$ must satisfy the near field problem (\ref{NearFieldPb}) at the threshold value of the continuous spectrum of $A^\Om$. Let us look for $U^0$, $U'$, in the form 
\[
U^0=c^0W,\qquad \quad U'=c'W,\qquad
\]
where $c^0,\,c'\in\R$ are constant to set and $W$ is the function introduced in (\ref{ScaSol}). For the far field expansion, at the origin we have the Taylor series
\begin{equation}\label{TaylorFarField0}
\begin{array}{rcl}
\gamma(x)\sin(\pi y/\eps) &= &(\gamma(0)+x\partial_x\gamma(0)+\dots)\sin(\pi y/\eps) \\[4pt]
&= &(\gamma(0)+\eps (x/\eps)\partial_x\gamma(0)+\dots)\sin(\pi y/\eps).
\end{array}
\end{equation}
On the other hand, in the near field expansion, when $x/\eps\to+\infty$, one has
\begin{equation}\label{NearField0}
\begin{array}{rcl}
W(z/\eps)&=&\left((x/\eps)+i)+\mathbb{S}((x/\eps)-i)\right)\sin(\pi y/\eps)+\dots\\[4pt]
&=&\left(i(1-\mathbb{S})+(x/\eps)(1+\mathbb{S})\right)\sin(\pi y/\eps)+\dots
\end{array}
\end{equation}
By matching (\ref{ExpansionTR0_2}) and (\ref{NearField}) in some intermediate region where $x\to0$ and $x/\eps\to+\infty$ as $\eps\to0$ (for example in the zone $\{z\in \mathrm{T}^\eps_\tau\,|\,\sqrt{\eps}<x<2\sqrt{\eps}\}$), exploiting (\ref{TaylorFarField0}) and (\ref{NearField0}), we find that we must impose
\begin{equation}\label{Matching1}
\gamma(0)=c^0i(1-\mathbb{S}),\qquad\quad 0=c^0(1+\mathbb{S}),\qquad\quad \partial_x\gamma(0)=c'i(1+\mathbb{S}).
\end{equation}
At this stage, we have to divide the analysis according to the value of $\mathbb{S}$.\\
\newline
Consider first the generic situation $\alpha\ne\alpha_k^\star$ for all $k\in\N$, \textit{i.e.} $\mathbb{S}\ne-1$. In that case, the second relation of (\ref{Matching1}) leads us to set $c^0=0$.
 Thus we obtain $U^0\equiv0$ and from the first relation of (\ref{Matching1}), we find that we must impose the Dirichlet condition 
\begin{equation}\label{farfield_BC}
\gamma(0)=0
\end{equation}
to complement the 1D model problem (\ref{farfield}). Solving the spectral problem (\ref{farfield}), (\ref{farfield_BC}), we obtain 
\begin{equation}\label{DefModel1DD}
\eta=q^2\pi^2 \quad\mbox{ for }q\in\N^\ast,\qquad\qquad  \gamma(x)=C \sin(q\pi x),
\end{equation}
where $C$ is a normalization factor. This establishes the item $i)$ of (\ref{result_thm_3}). Note that here, contrary to above, the eigenfunctions are not localized at the tip of the trapezoid $\mathrm{T}^\eps$. This is what we observe in Figure \ref{FigEigenfunctions}. There we also find that the shape of the eigenfunctions associated with the eigenvalues above the normalized threshold $\pi^2/\eps^2$ is in agreement with the expansion (\ref{ExpansionTR0_2}) when $\alpha=\pi/4$ and $\alpha=0.45\pi$. In particular, we indeed note at the tip a behavior in $x$ corresponding to a Dirichlet BC at the origin.\\
\newline
Finally, we consider the particular situation where $\alpha$ coincides with a critical angle, \textit{i.e.} $\alpha=\alpha_k^\star$ for some $k\in\N$, so that $\mathbb{S}=-1$. In that case, from the third relation of (\ref{Matching1}), we infer that we must complement the 1D model problem (\ref{farfield}) with the Neumann BC
\begin{equation}\label{farfield_BC_N}
\partial_x\gamma(0)=0.
\end{equation}
Solving the spectral problem (\ref{farfield}), (\ref{farfield_BC_N}), we get 
\[
\eta=(q-1/2)^2\pi^2 \quad\mbox{ for }q\in\N^\ast,\qquad\qquad  \gamma(x)=C \cos(\pi(q-1/2)x),
\]
where $C$ is a normalization factor. Then, from the first relation of (\ref{Matching1}), we find $c^0=-i\gamma(0)/2$ and so $U^0=-i\gamma(0)W/2$. This shows the item $ii)$ of (\ref{result_thm_3}). In the particular case $\alpha=\alpha^\star_0=0$, we recover the formulas (\ref{CaseAlphaNull}) and the higher order terms in (\ref{ansatzvp})--(\ref{ExpansionTR0_2}) are null. \\
\newline
Admittedly the above asymptotic analysis is rather formal. We emphasize that rigorous error estimates can be obtained by adapting the techniques presented for example in \cite{Post05,Grie08,Post12,Naza17,Naza18,Naza23} for similar problems. Let us describe the mechanism of the proof, which is in two steps. First, one exploits the classical lemma on ``almost eigenvalues and eigenfunctions'' (see \cite{ViLu} for the original version or, \textit{e.g.}, \cite[Lem.\,1]{Naza23}). For example, for the justification of (\ref{result_thm_1}), from the $v$ introduced in (\ref{DefAn}), one constructs some functions $v^{\mrm{app}}\in \mathcal{D}(A^\eps)$ such that
\[
\|A^\eps v^{\mrm{app}}-\eps^{-2}\mu_p\, v^{\mrm{app}}\|_{\mL^2(\mathrm{T}^\eps)} \le  C_p\,e^{-\sqrt{\pi^2-\mu_p}/\eps}.
\]
This ensures that $A^\eps$ has an eigenvalue $\lambda_{N^\eps_p}$, with an unknown $N^\eps_p$, in the segment 
\[
[\eps^{-2}\mu_p-C_p\,e^{-\sqrt{\pi^2-\mu_p}/\eps};\eps^{-2}\mu_p+C_p\,e^{-\sqrt{\pi^2-\mu_p}/\eps}].
\]
Next, one uses \textit{e.g.} the work of \cite{Grie08} or \cite{Post12} to prove that $\eps^2\lambda_p^\eps$ converges to some eigenvalue $\mu_{M^p}$ (again with some unknown $M^p$) of the limit problem. Finally, gathering the two results, one obtains $N^\eps_p=M^p=p$. 
\end{proof}

\section{Computation of the critical angles}\label{SectionNumericalResults}

Theorem \ref{MainThm1} in the previous section indicates that, as $\eps$ tends to zero, the asymptotics of the eigenvalues of $A^\eps$ above $\pi^2/\eps^2$ differs whether $\alpha$ is a critical angle or not. Here we wish to explain how to compute critical angles and show numerical evidence of existence of positive critical angles. Moreover we want to illustrate the shape of the eigenfunctions associated with the first eigenvalues of $A^\eps$ for $\alpha=\alpha^\star_1$.\\
\newline
To compute $\mathbb{S}(\alpha)$, we approximate the function $W$ introduced in (\ref{ScaSol}) which solves the near field problem (\ref{NearFieldPb}) at the threshold, by some function $\hat{W}$. Following \cite[\S6.2]{ChNa23}, \cite[\S9]{ChNa25}, this is done by truncating the domain $\Om$ at $X=L$ with $L$ large and by imposing the complex Robin condition
\begin{equation}\label{RobinConditions}
 \cfrac{\partial (\hat{W}-w^{\mrm{in}})}{\partial X}=\cfrac{1}{L-i}\,(\hat{W}-w^{\mrm{in}})\quad\mbox{ at }X= L.
\end{equation}
This leads us to consider the variational formulation
\begin{equation}\label{Pb_Approximation}
\begin{array}{|l}
\mbox{Find }\hat{W}\in\mH^1_{0}(\Om_L;\Sigma_L)\mbox{ such that for all }v\in\mH^1_{0}(\Om_L;\Sigma_L) \\[3pt]
\dsp\int_{\Om_L}\nabla \hat{W}\cdot\nabla v-\pi^2\,\hat{W} v\,dZ-\cfrac{1}{L-i}\int_{\Gamma_L} \hat{W} v\,dY=-\cfrac{2i}{L^2+1}\int_{\Gamma_L} w^{\mrm{in}} v\,dY,
\end{array}
\end{equation}
with 
\[
\Om_L=\{Z\in\Om\,|\,X<L\},\qquad \Sigma_L\coloneqq\{Z\in\partial\Om_L\,|\,Y=0\mbox{ or }Y=1\},\qquad \Gamma_L=\{L\}\times(0;1) 
\]
and $\mH^1_{0}(\Om_L;\Sigma_L)=\{v\in\mH^1(\Om_L)\,|\,v=0\mbox{ on }\Sigma_L\}$. Note that (\ref{RobinConditions}) is an approximated transparent condition: the function $w^{\mrm{out}}$ defined in (\ref{defModeswinout}) satisfies it exactly but $W-w^{\mrm{in}}$ only up to an error which decays when $L$ increases. More precisely, one can prove that $\hat{W}$ yields a good approximation of $W$ with an error which is exponentially decaying with $L$. In practice, we solve the problem (\ref{Pb_Approximation}) with a P2 finite element method thanks to \texttt{Freefem++} and we take $L=1+\tan\alpha$ to make sure that the oblique side $\Gamma$ of $\partial\Om$ does not meet $\Gamma_L$. Then replacing $W$ by $\hat{W}$ in the exact formula
\[
\mathbb{S}(\alpha)=\cfrac{2}{L^2+1}\,\int_{\Gamma_L} (W-w^{\mrm{in}})\,w^{\mrm{in}}\,dY,
\]
we get an approximation of $\mathbb{S}(\alpha)$. In Figure \ref{TRCpxPlane}, we display the values of $\mathbb{S}(\alpha)$ for $400$ values of $\alpha\in[0;0.48\pi)$. In accordance with relation (\ref{Sunitary}), the $\mathbb{S}(\alpha)$ are located on the unit circle. In Figure \ref{TRPhase}, we represent the phase of $\mathbb{S}(\alpha)$ for the same values of $\alpha$. In agreement with Assumption (\ref{DefAlphaP}), we observe a sequence of values of $\alpha=\alpha_k^\star$ such that $\mathbb{S}(\alpha_k^\star)=-1$. This sequence seems to accumulate at $\pi/2$. On these graphs, we note two interesting features. First, the phase of $\alpha\mapsto \mathbb{S}(\alpha)$ is not monotone. This makes it difficult to show that $\alpha\mapsto \mathbb{S}(\alpha)$ passes through $-1$ an infinite number of times, \textit{i.e.} that the family $(\alpha_k^\star)$ contains an infinite number of terms. Second, we observe that for certain ranges of $\alpha$, $\mathbb{S}(\alpha)$ runs rapidly on the unit circle. This is due to the presence of complex resonances with small imaginary part wandering in the complex plane close to the threshold value $\pi^2$ (not depicted here). These fast changes of the phase of $\mathbb{S}(\alpha)$ are due to the so-called Fano resonance (see \cite{ShTu12,ChNa18}).

\begin{figure}[!ht]
\centering
\includegraphics[width=10cm]{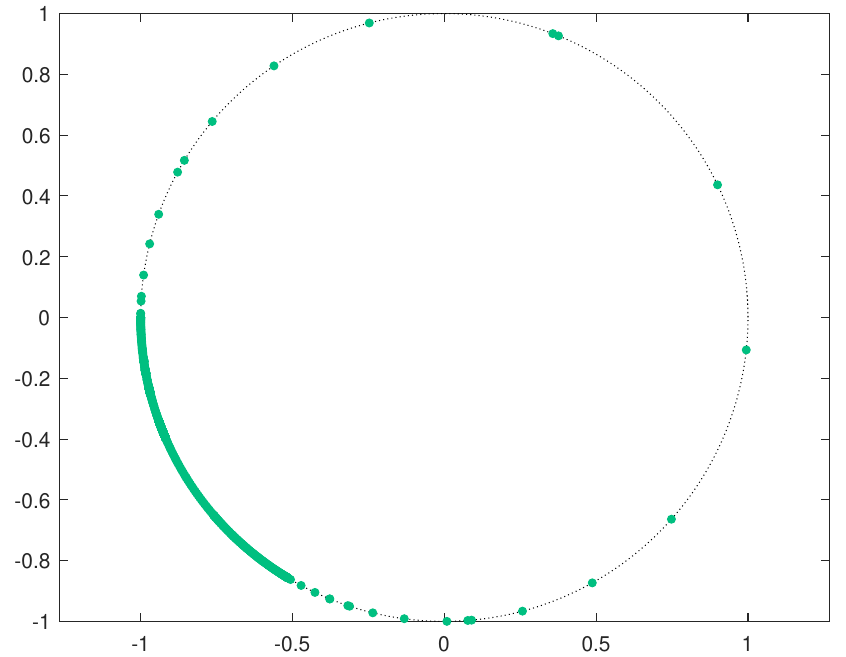}
\caption{$\mathbb{S}(\alpha)$ for $400$ values of  $\alpha\in[0;0.48\pi)$
in the complex plane.\label{TRCpxPlane}}
\end{figure}

\begin{figure}[!ht]
\centering
\includegraphics[width=8cm]{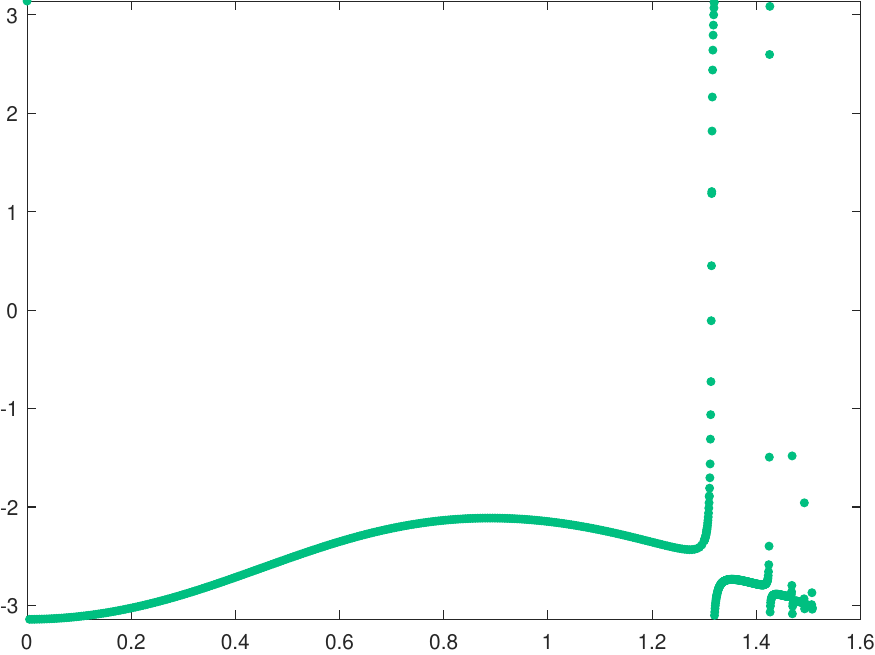}\quad\includegraphics[width=8cm]{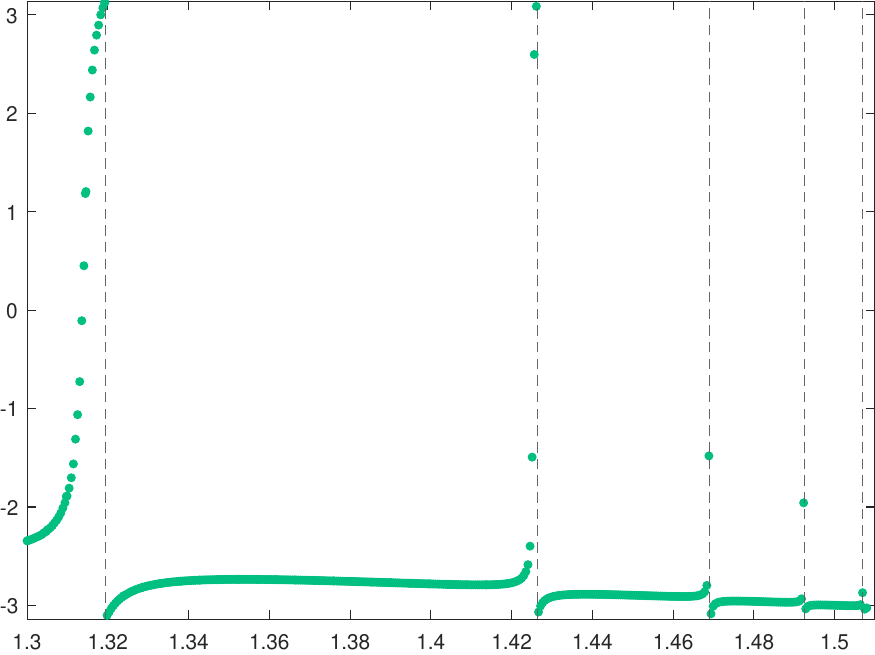}
\caption{Phase of $\mathbb{S}(\alpha)$ in $(-\pi;\pi]$  for $400$ values of  $\alpha\in[0;0.48\pi)$. The graph on the right is a zoom-in of that of the left. On the right picture, the vertical dashed lines mark the first positive critical angles.\label{TRPhase}}
\end{figure}

\noindent From these graphs, we can identify the value of $\alpha_1^\star$, namely 
\[
\alpha^\star_1\approx  1.321.
\]
Note that this corresponds up to small errors to the angles found in \cite{londergan1999binding,NaSh14} such that an eigenvalue appears below the continuous spectrum of $A^\Om$. This is not surprising because one can prove the following result: if there holds $\mathbb{S}=-1$ for a certain angle, then a small increase of $\alpha$ gives birth to an eigenvalue in the discrete spectrum of $A^\Om$ (see \cite{Naza20Threshold}).\\
\newline
In Figure \ref{FigEigenfunctionsAlphaS1}, we display the eigenfunctions associated with the five first eigenvalues of $A^\eps$ for $\eps=0.02$ and $\alpha=1.321$. As expected, we observe that only the eigenfunction associated with $\lambda_1^\eps$ is localized in a neighborhood of the tip. Moreover, the shape of the eigenfunctions corresponding to higher eigenvalues is in agreement with the expansion (\ref{ExpansionTR0_2}), (\ref{DefModel1DD}), that is 
\[
u^\eps(z)=C\cos(\pi(q-1/2)x)\sin(\pi y/\eps)+\dots. 
\]
In particular, we indeed observe at the tip a behavior in $x$ corresponding to a Neumann BC at the origin and not a Dirichlet BC as in the generic case $\alpha\ne\alpha_k^\star$, $k\in\N$ (compare with Figure \ref{FigEigenfunctions} for $\alpha\ne0$).

\begin{figure}[!ht]
\centering
\includegraphics[width=15cm]{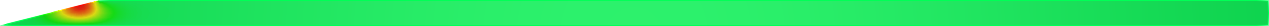}\\[-1pt]\includegraphics[width=15cm]{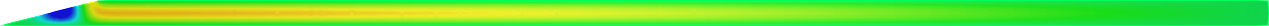}\\[-1pt]
\includegraphics[width=15cm]{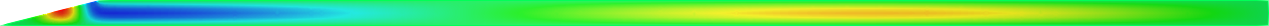}\\[-1pt]
\includegraphics[width=15cm]{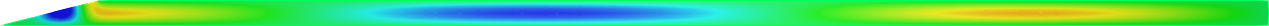}\\[-1pt]
\includegraphics[width=15cm]{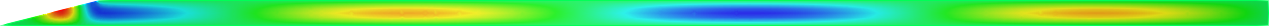}
\caption{Eigenfunctions associated with the 5 first eigenvalues of $A^\eps$ for $\eps=0.02$ and $\alpha=1.321\approx\alpha^\star_1$.\label{FigEigenfunctionsAlphaS1}}
\end{figure}

\section{Model problems around the critical angles }\label{SectionModelPb}

We arrive at the main novelty of this article. For the eigenpairs of $A^\eps$ corresponding to eigenvalues above the normalized threshold $\pi^2/\eps^2$, we obtained 1D models with at the origin either Dirichlet BC for generic $\alpha$ or Neumann BC for critical angles. The goal of this section is to describe the transition between these two models for $\alpha$ varying around critical angles. Not surprisingly, our new 1D problem will involve a Robin BC at the origin. But more interestingly, it will allow us to explain how the first eigenvalue above the normalized threshold dives below $\pi^2/\eps^2$ when $\alpha$ increases around a critical angle. Additionally, we will prove that the speed of the eigenvalue fall depends on the critical angle: it is faster around $\alpha_k^\star$, $k\ge1$, than around $\alpha_0^\star=0$. \\
\newline
To take into account the variation of the angle of the tip, we introduce a parameter $\tau\in\R$ and for $k\in\N$, we define the trapezoid
\[
\mathrm{T}^\eps_\tau\coloneqq\mathrm{T}^\eps(\alpha^\star_k+\tau\eps)= \{ z\in \R^2\,|\,y\in(0;\eps)\mbox{ and }x\in (y\tan(\alpha^\star_k+\tau\eps);1)\}
\]
(the geometry defined in (\ref{DefTrapezoid}) with $\alpha=\alpha^\star_k+\tau\eps$). We emphasize that in $\mathrm{T}^\eps_\tau$ both the small side and the angle of the tip depend on $\eps$. We denote with a subscript $\tau$ all the quantities introduced above. We wish to obtain an asymptotic expansion of the eigenpairs of $A^\eps_\tau$ as $\eps$ tends to zero with terms having a rather explicit dependence with respect to $\tau$. 

\subsection{Study around $\alpha^\star_k$, $k\ge1$}

As in Section \ref{SectionAsymptoAlpha}, we work with the method of matched asymptotic expansions. For an eigenpair $(\lambda^\eps_\tau,u^\eps_\tau)$ of $A^\eps_\tau$ with $\lambda^\eps_\tau$ close to the normalized threshold, we consider the ans\"atze
\begin{equation}\label{ExpansionTR0_2eta}
\lambda^\eps_\tau=\eps^{-2}\pi^2+\eta_\tau+\dots
\end{equation}
with 
\begin{equation}\label{FarFieldExpansion}
u^\eps_\tau(z)=\gamma_\tau(x)\sin(\pi y/\eps)+\dots\hspace{0.8cm}\qquad\mbox{(far field expansion)\hspace{0.35cm}}
\end{equation}
\begin{equation}\label{InnerFieldExpansion}
u^\eps_\tau(z)=U^0_{\tau}(z/\eps)+\eps\,U'_{\tau}(z/\eps)+\dots\qquad\mbox{(near field expansion)},
\end{equation}
where the constants $\eta_\tau$ and the functions $\gamma_\tau$, $U^0_\tau$, $U'_\tau$ are to be determined.\\
\newline
Inserting (\ref{ExpansionTR0_2eta})--(\ref{FarFieldExpansion}) into Problem (\ref{MainProblem}), we obtain 
\[
\begin{array}{|rcl}
\partial^2_x\gamma_\tau+\eta_\tau\gamma_\tau&=&0\quad\mbox{ in }I\\[3pt]
\gamma_\tau(1)&=&0.
\end{array}
\]
On the other hand, inserting (\ref{ExpansionTR0_2eta}), (\ref{InnerFieldExpansion}) into (\ref{MainProblem}), taking the limit $\eps\to0$ and collecting the terms at order $\eps^{-2}$ and $\eps^{-1}$, we find that $U^0_{\tau}$, $U'_{\tau}$ must satisfy respectively the problems
\begin{equation}\label{NearField1}
\begin{array}{|rcll}
\Delta U^0_\tau+\pi^2U^0_\tau&=&0&\mbox{ in }\Om(\alpha^\star_k)=\{(X,Y)\in \R^2\,|\,Y\in(0;1)\mbox{ and }X\in (Y\tan\alpha^\star_k;+\infty)\}\\[3pt]
U^0_\tau&=&0&\mbox{ on }\Sigma(\alpha^\star_k)=\{Z\in\partial\Om(\alpha^\star_k)\,|\,Y=0\mbox{ or }Y=1\}
\end{array}
\end{equation}
\begin{equation}\label{NearFieldP}
\begin{array}{|rcll}
\Delta U'_\tau+\pi^2U'_\tau&=&0&\mbox{ in }\Om(\alpha^\star_k)\\[3pt]
U'_\tau&=&0&\mbox{ on }\Sigma(\alpha^\star_k).
\end{array}\hspace{8.9cm}
\end{equation}
Let us clarify the conditions we should impose for $U^0_\tau$, $U'_\tau$ on the non horizontal part of the boundary $\Gamma(\alpha^\star_k)$. Introduce the coordinates 
$(s^\eps,n^\eps)$, $(s,n)$ as pictured in Figure \ref{PerturbedGeomInnerField} such that 
\[
\begin{array}{rcl}
(s^\eps,n^\eps)&=& (X\sin(\alpha^\star_k+\tau\eps)+Y\cos(\alpha^\star_k+\tau\eps),-X\cos(\alpha^\star_k+\tau\eps)+Y\sin(\alpha^\star_k+\tau\eps))\\[5pt]
(s,n)&=&(X\sin\alpha^\star_k+Y\cos\alpha^\star_k,-X\cos\alpha^\star_k+Y\sin\alpha^\star_k)\\[3pt]
&=& (\cos(\tau\eps)s^\eps+\sin(\tau\eps)n^\eps,-\sin(\tau\eps)s^\eps+\cos(\tau\eps)n^\eps).
\end{array}
\]

\begin{figure}[h!]
\centering
\begin{tikzpicture}[scale=1.4]
\filldraw[fill=gray!20,draw=none] (0,0) -- ++(5,0) -- ++(0,2) -- ++(-4,0)--cycle;
\draw (5,0) -- ++(-5,0) -- ++(1,2) -- ++(4,0);
\draw[dashed] (5,0)-- ++(0,2);
\draw[-] (5,0)--(0,0) -- (0,2);
\draw[-] (0,0) -- (2,2);
\draw[-] (0.1,0.45) --++ (1.2,0) node[anchor=west]{$\alpha^\star_k$};
\draw[-] (0.3,0.8) --++ (1,0) node[anchor=west]{$\alpha^\star_k+\tau\eps$};
\draw[<-] (0,0) ++(90:.4) arc (90:65:.4); 
\draw[<-] (0,0) ++(90:0.8) arc (90:45:0.8); 
\begin{scope} [rotate=64]
\draw[->] (0,0) -- (1.6,0) node[anchor=east]{$s$};
\draw[->] (0,0) -- (0,1.6) node[anchor=north]{$n$};
\end{scope}
\begin{scope} [rotate=45]
\draw[->] (0,0) -- (1.6,0) node[anchor=north]{$s^\eps$};
\draw[->] (0,0) -- (0,1.6) node[anchor=west]{$n^\eps$};
\end{scope}
\end{tikzpicture}
\caption{Near field geometries $\Om(\alpha^\star_k)$ and $\Om(\alpha^\star_k+\tau\eps)$, $\tau>0$.}\label{PerturbedGeomInnerField}
\end{figure}

\noindent We have
\[
\cfrac{\partial}{\partial n^\eps}=\sin(\tau\eps)\cfrac{\partial}{\partial s}+\cos(\tau\eps)\cfrac{\partial}{\partial n}
\]
and on the line such that $n^\eps=0$, there holds $n=-\tan(\tau\eps)s$. Therefore, the condition 
$\partial_{n^\eps}u^\eps_\tau=0$ on $\Gamma^\eps_\tau$ gives
\[
\begin{array}{rcl}
0&=&\partial_nU^0_\tau(s,-\tan(\eps\tau)s)+\eps\,(\tau\partial_sU^0_\tau(s,-\tan(\eps\tau)s)+\partial_nU'_\tau(s,-\tan(\eps\tau)s))+O(\eps^2)\\[3pt]
&=&\partial_nU^0_\tau(s,0)+\eps\,(\tau\partial_sU^0_\tau(s,0)+\partial_nU'_\tau(s,0)-\tau s\partial^2_nU^0_\tau(s,0))+O(\eps^2).
\end{array}
\]
This leads us to impose 
\begin{equation}\label{BdryCondition0}
\partial_nU^0_\tau=0\qquad\mbox{ on }\Gamma(\alpha^\star_k)
\end{equation}
and, using the first line of (\ref{NearField1}), 
\begin{equation}\label{BdryConditionP}
\partial_nU'_\tau=-\tau\partial_sU^0_\tau-\tau s(\partial^2_sU^0_\tau+\pi^2U^0_\tau)\qquad\mbox{ on }\Gamma(\alpha^\star_k).
\end{equation}
To define completely the terms in (\ref{ExpansionTR0_2eta}), 
(\ref{FarFieldExpansion}), (\ref{InnerFieldExpansion}), it remains to impose a condition at the origin for $\gamma_\tau$ and to prescribe behaviors at infinity for $U^0_\tau$, $U'_\tau$. Again, this will be done by matching the far field and near field expansions (see (\ref{FarFieldExpansion}) and (\ref{InnerFieldExpansion})) of $u^\eps_\tau$ in some intermediate region where $x\to0$ and $x/\eps\to+\infty$ as $\eps\to0$.\\ 
\newline
For the far field expansion, as in (\ref{TaylorFarField0}), at the origin we have the Taylor series
\begin{equation}\label{TaylorFarField}
\begin{array}{rcl}
\gamma_\tau(x)\sin(\pi y/\eps) &= &(\gamma_\tau(0)+x\partial_x\gamma_\tau(0)+\dots)\sin(\pi y/\eps) \\[4pt]
&= &(\gamma_\tau(0)+\eps (x/\eps)\partial_x\gamma_\tau(0)+\dots)\sin(\pi y/\eps).
\end{array}
\end{equation}
From (\ref{NearField1}), (\ref{BdryCondition0}) and (\ref{TaylorFarField}), when matching the expansions at order $\eps^0$, it is natural to set 
\begin{equation}\label{IdentiTerm1}
U^0_{\tau}=\gamma_\tau(0)\,W
\end{equation}
where $W$ is the function defined in (\ref{ScaSol}). Here, because $\alpha^\star_k$ is a critical angle, $W$ is real and admits the decomposition 
\[
W(Z)=\sin(\pi Y)+\widetilde{W}(Z),
\]
with $\widetilde{W}\in\mH^1_0(\Om;\Sigma)$ which decays exponentially at infinity.\\
\newline
The problem (\ref{NearFieldP}), (\ref{BdryConditionP}) in general has no bounded solution. However the classical Kondratiev theory \cite{Kond67} (see also \cite[Chap.\,2 and 5]{NaPl94}) guarantees that it admits solutions with linear growth of the form
\begin{equation}\label{ExpanTermAffine}
U'_{\tau}(Z)=(C_0+C_1X)\sin(\pi Y)+\tilde{U}'_{\tau}(Z)
\end{equation}
where $C_0$, $C_1\in\R$ and $\tilde{U}'_{\tau}\in\mH^1_0(\Om(\alpha_k^\star);\Sigma(\alpha_k^\star))$. Note that the constant $C_0$ is arbitrary, because $U'_{\tau}$ is defined up to $\mrm{span}(W)$, whereas $C_1$ is uniquely defined. Let us compute $C_1$. To proceed, first using (\ref{NearField1}), (\ref{NearFieldP}), (\ref{BdryCondition0}), we write, for $L$ large enough,
\begin{equation}\label{Calculus1}
\int_{\Gamma(\alpha^\star_k)} W\partial_n U'_{\tau}\,ds=-\int_{\Gamma_L} W\partial_X U'_{\tau}-U'_{\tau}\partial_X W\,dY,
\end{equation}
where we recall that $\Gamma_L=\{L\}\times(0;1)$. Taking the limit $L\to+\infty$ in (\ref{Calculus1}) and exploiting that $\partial_X W$ decays exponentially as $X\to+\infty$, we obtain 
\[
\int_{\Gamma(\alpha^\star_k)} W\partial_n U'_{\tau}\,ds=-\cfrac{C_1}{2}\,.
\]
On the other hand, from (\ref{BdryConditionP}), (\ref{IdentiTerm1}), we can write
\[
\int_{\Gamma(\alpha^\star_k)} W\partial_n U'_{\tau}\,ds=-\tau\gamma_\tau(0)\int_{\Gamma(\alpha^\star_k)} W(\partial_sW+ s(\partial^2_sW+\pi^2W))\,ds = \tau B\gamma_\tau(0)
\]
with 
\begin{equation}\label{CalculusB}
B\coloneqq\int_{\Gamma(\alpha^\star_k)}s\big((\partial_sW)^2-\pi^2W^2\big)\,ds.
\end{equation}
In Proposition \ref{PropositionSigneIntegral} below, we will show that $B>0$ (and explain why the integral defining $B$ is finite). Thus in (\ref{ExpanTermAffine}) we find $C_1=-2\tau B \gamma_\tau(0)$.\\
As a consequence, by matching the linear terms of order $\eps$ between (\ref{TaylorFarField}) and (\ref{InnerFieldExpansion}), we see that we must impose the Robin BC at the origin
\[
\partial_x\gamma_\tau(0)=-2\tau B\gamma_\tau(0).
\]
Finally for the far field term in (\ref{ExpansionTR0_2eta}), we obtain the spectral model problem 
\begin{equation}\label{farfieldetaeta2}
\begin{array}{|rcl}
\partial^2_x\gamma_\tau+\eta_\tau\gamma_\tau&=&0\quad\mbox{ in }I\\[3pt]
\gamma_\tau(1)&=&0\\[3pt]
\partial_x\gamma_\tau(0)&=&-2\tau B\gamma_\tau(0).
\end{array}
\end{equation}

\noindent Now we can state and prove the following result. As in Theorem \ref{MainThm1}, $N_\circ+N_\dagger$ denotes the number of eigenvalues of $A^{\Om(\alpha^\star_k)}$ less or equal than $\pi^2$.

\begin{theorem}\label{ThmModel1}
Consider a critical angle $\alpha_k^\star$, $k\ge 1$, introduced in (\ref{DefAlphaP}) and some $\tau\in\R$.\\ Let $\lambda^\eps_{p,\tau}$ denote the $p$-th eigenvalue of the operator $A^\eps_\tau$ defined in the geometry $\mathrm{T}^\eps_\tau=\mathrm{T}^\eps(\alpha^\star_k+\tau\eps)$.\\ For $p=N_\circ+N_\dagger+q$, $q\in\N^{\ast}$, there are  constants $C_{p,\tau}$, $\delta_{p,\tau}$, $\eps_{p,\tau}>0$ such that we have the estimate 
\[
\big|\lambda^\eps_{p,\tau}-(\eps^{-2}\pi^2+\eta_{q,\tau})\big|\le C_p\,\eps^{\delta_p}\qquad\quad\forall\eps\in(0;\eps_p],
\]
where $\eta_{q,\tau}$ stands for the $q$-th eigenvalue of the model problem (\ref{farfieldetaeta2}) with Robin BC at the origin.\\
[5pt]
Moreover, we have
\begin{equation}\label{FormulaModel1a}
\lim_{\tau\to-\infty}\eta_{1,\tau}=\pi^2,\qquad\qquad\quad \eta_{1,\tau}\underset{\tau\to+\infty}{\sim}-4B^2\tau^2\hspace{0.5cm}
\end{equation}
and for $q\ge2$,
\begin{equation}\label{FormulaModel1b}
\lim_{\tau\to-\infty}\eta_{q,\tau}=q^2\pi^2,\qquad\qquad \lim_{\tau\to+\infty}\eta_{q,\tau}=(q-1)^2\pi^2.\hspace{-0.3cm}
\end{equation}
Here the constant $B>0$ is defined in (\ref{CalculusB}) and $N_\circ$, $N_\dagger$ are introduced respectively in (\ref{DefEigenValDS}), (\ref{DefNbEigTR}).
\end{theorem}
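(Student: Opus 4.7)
The plan is to combine the formal matched-asymptotic construction sketched above with a rigorous justification via a quasi-mode argument, and then to analyze the one-dimensional eigenvalue problem (\ref{farfieldetaeta2}) by elementary means.

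For the error estimate, I would glue the far-field ansatz $\gamma_{q,\tau}(x)\sin(\pi y/\eps)$ and the near-field ansatz $u^0_\tau(z/\eps)+\eps u'_\tau(z/\eps)$ (with $\gamma_{q,\tau}$ the $q$-th eigenfunction of (\ref{farfieldetaeta2})) using a smooth cut-off supported in the intermediate region $\sqrt{\eps}\le x\le 2\sqrt{\eps}$. The crucial step is the identification $C_1=-2\tau B\gamma_\tau(0)$ in (\ref{ExpanTermAffine}), obtained from the Green identity (\ref{Calculus1}) applied to the pair $(W_+,u'_\tau)$ in the truncated domain and from the trace formula (\ref{CalculusB}); this is precisely what forces the Robin coefficient $-2\tau B$ in (\ref{farfieldetaeta2}). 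A direct calculation then shows that the glued quasi-mode $U^\eps_{p,\tau}\in\mathcal{D}(A^\eps_\tau)$ satisfies
\[
\big\|A^\eps_\tau U^\eps_{p,\tau}-(\eps^{-2}\pi^2+\eta_{q,\tau})U^\eps_{p,\tau}\big\|_{\mL^2(\mrm{T}^\eps_\tau)}\le C\,\eps^{1+\delta}\big\|U^\eps_{p,\tau}\big\|_{\mL^2(\mrm{T}^\eps_\tau)},
\]
so the classical near-spectrum lemma (e.g.\ \cite[Ch.\,2]{MaNaPl}, \cite{Post05,Naza17}) places an eigenvalue of $A^\eps_\tau$ within $C_p\,\eps^{1+\delta_p}$ of $\eps^{-2}\pi^2+\eta_{q,\tau}$. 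To fix the enumeration and rule out stray eigenvalues in the gap, I would adapt the min-max/compactness strategy used in the proof of Theorem \ref{MainThm1}, exploiting the Kondratiev regularity theory to control the singular behavior at the tip uniformly in $\tau$ on any bounded $\tau$-interval.

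For the description of the 1D eigenvalues, I would solve (\ref{farfieldetaeta2}) explicitly. Any solution vanishing at $x=1$ is proportional to $\sin(\sqrt{\eta_\tau}(1-x))$ if $\eta_\tau>0$ and to $\sinh(\sqrt{-\eta_\tau}(1-x))$ if $\eta_\tau<0$. The Robin condition at $x=0$ then yields the dispersion relations
\[
\sqrt{\eta_\tau}\cot\sqrt{\eta_\tau}=2\tau B\quad(\eta_\tau>0),\qquad\sqrt{-\eta_\tau}\coth\sqrt{-\eta_\tau}=2\tau B\quad(\eta_\tau<0).
\]
Since the quadratic form $\gamma\mapsto\int_0^1|\gamma'|^2\,dx-2\tau B|\gamma(0)|^2$ on $\{\gamma\in\mH^1(0,1)\,|\,\gamma(1)=0\}$ is strictly decreasing in $\tau$, min-max gives that each $\eta_{q,\tau}$ is continuous and strictly decreasing in $\tau$, and the two relations above parametrize its consecutive branches. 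All four asymptotic statements then follow by elementary inspection: as $\tau\to-\infty$ the equation $\sqrt{\eta}\cot\sqrt{\eta}\to-\infty$ forces $\sqrt{\eta_{q,\tau}}\to q\pi^-$, hence $\eta_{q,\tau}\to q^2\pi^2$ for every $q\ge 1$; as $\tau\to+\infty$ the hyperbolic branch contributes exactly one negative root obeying $\sqrt{-\eta_{1,\tau}}\sim 2\tau B$ (since $\coth s\to 1$), whence $\eta_{1,\tau}\sim-4B^2\tau^2$; and for $q\ge 2$ the positive roots cluster on the right of the poles of $\cot$, \textit{i.e.}\ $\sqrt{\eta_{q,\tau}}\to(q-1)\pi^+$, so $\eta_{q,\tau}\to(q-1)^2\pi^2$.

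The main obstacle is not the 1D analysis but the $\eps^{1+\delta}$ remainder bound: the corrector $u'_\tau$ carries the unbounded linear term $C_1 X\sin(\pi Y)$, so the matching must be performed carefully at the $\sqrt{\eps}$-scale, and the oblique boundary $\Gamma^\eps_\tau$ varies with both $\eps$ and $\tau$ through the rotation by $\tau\eps$, producing $O(\eps)$ commutator terms in $\partial_n$ that must be absorbed into $u'_\tau$. Everything hinges on the positivity of $B$ (Proposition \ref{PropositionSigneIntegral}): if $B$ vanished, the Robin condition in (\ref{farfieldetaeta2}) would collapse to Neumann and the dive $\eta_{1,\tau}\sim-4B^2\tau^2$ would disappear entirely, so the non-trivial dynamics of the eigenvalue fall truly rests on that single sign statement.
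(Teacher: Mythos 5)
Your proposal is correct and follows essentially the same approach as the paper: the explicit $\sin$/$\sinh$ solutions, the dispersion relations $\sqrt{\eta}\cot\sqrt{\eta}=2\tau B$ and $\sqrt{-\eta}\coth\sqrt{-\eta}=2\tau B$, the monotonicity in $\tau$ via the quadratic form, and the resulting limits are exactly what the paper uses, while the quasi-mode / near-spectrum-lemma justification of the $\eps^{1+\delta}$ remainder is the standard machinery the paper invokes by citation without spelling it out.
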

\begin{remark}
The constants $C_{p,\tau}$, $\delta_{p,\tau}$, $\eps_{p,\tau}>0$ depend \textit{a priori} on $\tau$. However they can be chosen uniform for $\tau$ in a compact set.
\end{remark}
\begin{proof}
Let us complement the above analysis by detailing the study of (\ref{farfieldetaeta2}) leading to formula (\ref{FormulaModel1a}), (\ref{FormulaModel1b}). First, observe that for $\tau=0$, we find as in (\ref{farfield_BC_N}) a homogeneous Neumann condition at the origin. Thus for $q\in\N^\ast$, we have
\[
\eta_{q,0}=(q-1/2)^2\pi^2,\qquad\qquad  \gamma_{q,0}(x)=C \cos(\pi(q-1/2)x),
\]
where $C$ is a normalization factor. The variational formulation associated with (\ref{farfieldetaeta2}) writes
\begin{equation}\label{VariationalForm1}
\begin{array}{|l}
\mbox{ Find }(\eta,\gamma)\in\R\times \mH^1_{0}(I;1)\mbox{ such that }\\[3pt]
\dsp\int_I \partial_x\gamma\,\partial_x\gamma' \,dx-2\tau B\gamma(0)\gamma'(0) = \eta \int_I \gamma\gamma' \,dx\qquad \forall \gamma'\in\mH^1_{0}(I;1),
\end{array}
\end{equation}
where $\mH^1_{0}(I;1)\coloneqq\{\gamma\in\mH^1(I)\,|\,\gamma(1)=0\}$. With (\ref{VariationalForm1}) it is easy to show that the eigenvalues of (\ref{farfieldetaeta2}) are decreasing with respect to $\tau\in\R$.\\ 
Solving (\ref{farfieldetaeta2}) explicitly, we find that $\eta_\tau<0$ is an eigenvalue if and only if there holds 
\begin{equation}\label{Disper1}
\coth(\sqrt{|\eta_\tau|})=\cfrac{2\tau B}{\sqrt{|\eta_\tau|}}\,,
\end{equation}
where $\coth$ stands for the hyperbolic cotangent. By remarking that $x\mapsto x \coth(x)$ is larger than $1$ on $(0;+\infty)$, since $B>0$, we deduce that (\ref{Disper1}) has exactly one solution $\eta_{\tau,1}$ if and only if $\tau>1/(2B)$. Moreover, we find $\eta_{\tau,1}\sim-4B^2\tau^2$ when $\tau\to+\infty$.\\
Besides, $0$ is an eigenvalue of (\ref{farfieldetaeta2}) if and only if $\tau=1/(2B)$.\\ Finally we obtain that $\eta_\tau>0$ is an eigenvalue of (\ref{farfieldetaeta2}) if and only if there holds 
\begin{equation}\label{Disper2}
\cot(\sqrt{\eta_\tau})=\cfrac{2\tau B}{\sqrt{\eta_\tau}}\,.
\end{equation}
Therefore we have $\lim_{\tau\to-\infty}\eta_{q,\tau}=q^2\pi^2$ and $\lim_{\tau\to+\infty}\eta_{q,\tau}=(q-1)^2\pi^2$. The presentation above is rather formal but error estimates can be obtained by working as explained in the end of the proof of Theorem \ref{MainThm1}. 
\end{proof}
\noindent To illustrate the results of Theorem \ref{ThmModel1}, we solve the 1D model problem (\ref{VariationalForm1}) with a finite element method and display in Figure \ref{BehaviourEigen} the five first eigenvalues for $\tau$ varying in $(-30;30)$. As $\tau\to+\infty$, as expected we observe that the first eigenvalue dives below zero. On the other hand, the positive eigenvalues converge to the eigenvalues $q^2\pi^2$, $q\in\N^\ast$, of the Dirichlet problem (\ref{farfield}), (\ref{farfield_BC}) as $\tau\to\pm\infty$.

\begin{figure}[ht!]
\centering
\includegraphics[width=11cm]{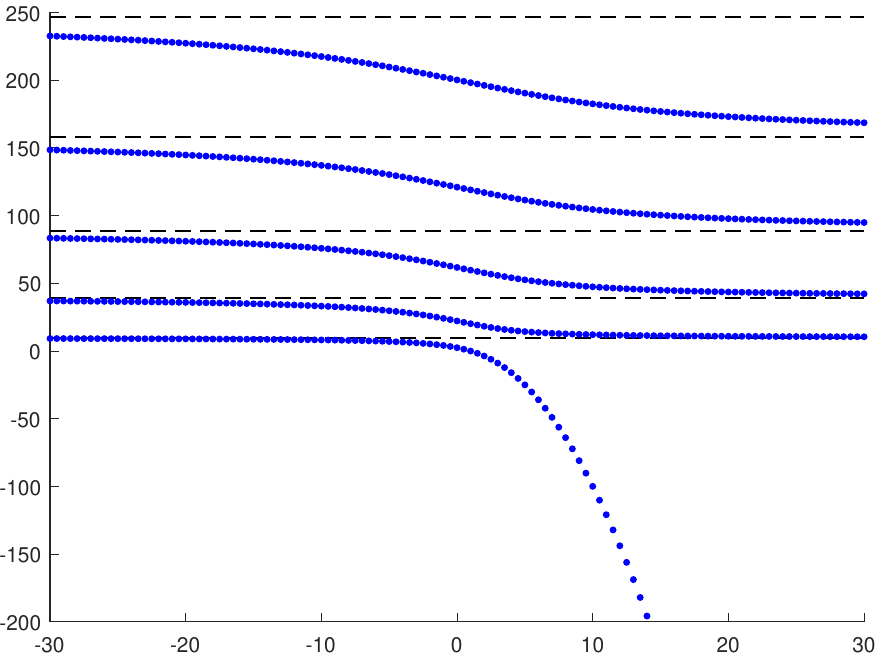}
\caption{Five first eigenvalues of the 1D model problem (\ref{VariationalForm1}) for $\tau$ varying in $(-30;30)$. The horizontal dashed lines correspond to the values of $q^2\pi^2$, $q=1,2,\dots,5$. Here we take $2B=1$.\label{BehaviourEigen}}
\end{figure}

\subsection{Study around $\alpha^\star_0=0$}

Now we turn our attention to the perturbation of the angle around $\alpha^\star_0=0$ (straight end). Let us first explain why the analysis above must be adapted.\\
\newline
Pick some $\tau\in\R$. Let us try to derive the asymptotics of the eigenpairs of the operator $A^\eps_\tau$ defined in the geometry $\mathrm{T}^\eps_\tau=\mathrm{T}^\eps(\tau\eps)$ with the above procedure. In $\Om(\alpha^\star_0)=\Om(0)=(0;+\infty)\times(0;1)$, the function $W$ defined in (\ref{ScaSol}) is simply $W(Z)=\sin(\pi Y)$. As a consequence (\ref{IdentiTerm1}) leads to $
U^0_{\tau}=\gamma_\tau(0)\,\sin(\pi Y)$ so that from (\ref{NearFieldP}), (\ref{BdryConditionP}), we find that $U'_\tau$  solves
\[
\begin{array}{|rcll}
\Delta U'_\tau+\pi^2U'_\tau&=&0&\quad\mbox{ in }\Om(0)\\[3pt]
U'_\tau&=&0&\quad\mbox{ on }\Sigma(0)=\{Z\in\partial\Om(0)\,|\,Y=0\mbox{ or }1\}\\[3pt]
\partial_nU'_\tau&=&-\pi\gamma_\tau(0)\,\cos(\pi Y)&\quad\mbox{ on }\Gamma(0)=\{0\}\times(0;1).
\end{array}
\]
Since $Z\mapsto \cos(\pi Y)$ is odd with respect to the line $Y=1/2$, using decomposition in Fourier series with respect to the $Y$-variable, one shows that $U'_\tau$ is exponentially decaying at infinity. As a consequence, we cannot match the linear terms of order $\eps$ between (\ref{TaylorFarField}) and (\ref{InnerFieldExpansion}). This indicates that the ans\"atze (\ref{ExpansionTR0_2eta})--(\ref{InnerFieldExpansion}) is not well-suited, we have to modify it.\\
\newline
Instead, let us work in the domain 
\[
\mathrm{T}^\eps_\tau(\kappa)=\{ z\in\R^2\,|\,y\in(0;\eps)\mbox{ and }x\in (y\tan\kappa;1)\},
\]
where for the moment we do not specify the dependence of $\kappa$ (small) with respect to $\tau,\eps$. Let us still denote by $A^\eps_\tau$ the operator associated with Problem (\ref{MainProblem}) in the geometry $\mathrm{T}^\eps_\tau(\kappa)$. As an expansion of an eigenpair $(\lambda^\eps_\tau,u^\eps_\tau)$ of $A^\eps_\tau$ with $\lambda^\eps_\tau$ close to the normalized threshold, keep (\ref{ExpansionTR0_2eta})--(\ref{FarFieldExpansion}) and replace (\ref{InnerFieldExpansion}) by the new decomposition
\begin{equation}\label{InnerFieldExpansionNew}
u^\eps_\tau(z)=U^0_{\tau}(z/\eps)+\kappa\,U'_{\tau}(z/\eps)+\kappa^2\,U''_{\tau}(z/\eps)+\dots \qquad\mbox{(near field expansion).}
\end{equation}
Here 
\begin{equation}\label{IdentiTerm1Bis}
U^0_{\tau}(z/\eps)=\gamma_\tau(0)\,W(z/\eps)=\gamma_\tau(0)\,\sin(\pi y/\eps)
\end{equation}
as in (\ref{IdentiTerm1}) and $U'_{\tau}$, $U''_{\tau}$ are to be determined.\\
\newline
Inserting (\ref{ExpansionTR0_2eta}), (\ref{InnerFieldExpansionNew}) into (\ref{MainProblem}), taking the limit $\eps\to0$ and collecting the terms at order $\kappa\eps^{-2}$ and $\kappa^2\eps^{-2}$, we find that $U'_{\tau}$, $U''_{\tau}$ must satisfy respectively the problems
\begin{equation}\label{NearFieldPNew}
\begin{array}{|rcll}
\Delta U'_\tau+\pi^2U'_\tau&=&0&\quad\mbox{ in }\Om(0)\\[3pt]
U'_\tau&=&0&\quad\mbox{ on }\Sigma(0)
\end{array}
\end{equation}
\begin{equation}\label{NearFieldPP}
\begin{array}{|rcll}
\Delta U''_\tau+\pi^2U''_\tau&=&0&\quad\mbox{ in }\Om(0)\\[3pt]
U'_\tau&=&0&\quad\mbox{ on }\Sigma(0).
\end{array}\hspace{-0.15cm}
\end{equation}
Let us clarify the conditions we should impose on $\Gamma(0)=\{0\}\times(0;1)$ for $U'_\tau$, $U''_\tau$. Introduce the coordinates $(s^\kappa,n^\kappa)$, $(s,n)$ such that 
\[
\begin{array}{rcl}
(s^\kappa,n^\kappa)&=& (X\sin\kappa+Y\cos\kappa,-X\cos\kappa+Y\sin\kappa)\\[5pt]
(s,n)&=&(Y,-X).
\end{array}
\]
We have
\[
\cfrac{\partial}{\partial n^\kappa}=\sin\kappa\cfrac{\partial}{\partial s}+\cos\kappa\cfrac{\partial}{\partial n}
\]
and on the line such that $n^\kappa=0$, there holds $n=-s\tan\kappa $. Therefore, the condition 
$\partial_{n^\kappa}u^\eps_\tau=0$ on $\Gamma^\eps_\tau$ implies
\[
\begin{array}{l}
0=\partial_nU^0_\tau(s,-s\tan\kappa)+\kappa\,(\partial_sU^0_\tau(s,-s\tan\kappa)+\partial_nU'_\tau(s,-s\tan\kappa))\\[4pt]
\hspace{3.7cm}+\kappa^2(\partial_sU'_\tau(s,-s\tan\kappa)+\partial_nU''_\tau(s,-s\tan\kappa)-\frac{1}{2}\partial_nU^0_\tau(s,-s\tan\kappa))+O(\kappa^3).
\end{array}
\]
Using that $\partial_nU^0_\tau=0$ according to (\ref{IdentiTerm1Bis}), this gives
\[
\begin{array}{rcl}
0&=&\kappa\,(\partial_sU^0_\tau(s,0)+\partial_nU'_\tau(s,0))+\kappa^2\,(\partial_sU'_\tau(s,0)+\partial_nU''_\tau(s,0)- s\partial^2_nU'_\tau(s,0))+O(\kappa^3).
\end{array}
\]
Thus we impose 
\begin{equation}\label{BdryConditionPNew}
\partial_nU'_\tau=-\pi\gamma_\tau(0)\,\cos(\pi Y)\qquad\mbox{ on }\Gamma(0)
\end{equation}
and
\begin{equation}\label{BdryConditionPP}
\partial_nU''_\tau=-\partial_sU'_\tau- s(\partial^2_sU'_\tau+\pi^2U'_\tau)\qquad\mbox{ on }\Gamma(0).
\end{equation}
The solution $U'_\tau$ of (\ref{NearFieldPNew}), (\ref{BdryConditionPNew}) can be decomposed as $U'_\tau=\gamma_\tau(0) V$ where $V$ solves
\[
\begin{array}{|rcll}
\Delta V+\pi^2V&=&0&\quad\mbox{ in }\Om(0)\\[3pt]
V&=&0&\quad\mbox{ on }\Sigma(0)\\[3pt]
\partial_nV&=&-\pi\,\cos(\pi Y)&\quad\mbox{ on }\Gamma(0).
\end{array}
\] 
As already mentioned, $U'_\tau$ is exponentially decaying at infinity. On the other hand, again from \cite{Kond67}, \cite[Chap.\,5]{NaPl94}, we know that (\ref{NearFieldPP}), (\ref{BdryConditionPP}) admits a solution with the decomposition 
\begin{equation}\label{ExpanTermAffineNew}
U''_{\tau}(Z)=(C_0+C_1X)\sin(\pi Y)+\tilde{U}''_{\tau}(Z)
\end{equation}
where $C_0$, $C_1\in\R$ and $\tilde{U}''_{\tau}\in\mH^1_0(\Om(0);\Sigma(0))$. Note that the constant $C_0$ is arbitrary whereas $C_1$ is uniquely defined. Let us compute its value. To proceed, first using (\ref{NearFieldP}), (\ref{NearFieldPP}), (\ref{BdryConditionP}), we write, for $L>0$,
\begin{equation}\label{Calculus1New}
\int_{\Gamma(0)} W\partial_n U''_{\tau}\,ds=-\int_{\Gamma_L} W\partial_X U''_{\tau}\,dY,
\end{equation}
where again $\Gamma_L=\{L\}\times(0;1)$. Taking the limit $L\to+\infty$ in (\ref{Calculus1New}), we obtain 
\[
\int_{\Gamma(0)} W\partial_n U''_{\tau}\,ds=-\cfrac{C_1}{2}\,.
\]
On the other hand, from (\ref{BdryConditionPP}), (\ref{IdentiTerm1}) as well as (\ref{BdryConditionPNew}), we obtain
\[
\begin{array}{rcl}
\dsp\int_{\Gamma(0)} W\partial_n U''_{\tau}\,ds&=&-\dsp\int_{\Gamma(0)} \sin(\pi Y)(\partial_sU'_{\tau}+ s(\partial^2_sU'_{\tau}+\pi^2U'_{\tau}))\,ds \\[8pt]
&=&-\dsp\int_{\Gamma(0)} \sin(\pi Y)(\partial_sU'_{\tau}-\partial_sU'_{\tau}+\pi^2sU'_{\tau})-\pi\cos(\pi Y)s\partial_sU'_{\tau}\,ds \\[8pt]
&=&-\dsp\int_{\Gamma(0)} \pi^2\sin(\pi Y)sU'_{\tau}-\pi\cos(\pi Y)s\partial_sU'_{\tau}\,ds \\[8pt]
&=&-\dsp\int_{\Gamma(0)} \pi\cos(\pi Y)U'_{\tau}\,ds=\gamma_\tau(0)\int_{\Gamma(0)} V\partial_nV\,ds =\gamma_\tau(0)\,D
\end{array}
\]
with 
\begin{equation}\label{CalculusD}
D\coloneqq\int_{\Om(0)}|\nabla V|^2-\pi^2V^2\,dZ.
\end{equation}
Using that $V$ is odd with respect to the line $Y=1/2$, with the Poincar\'e-Friedrichs inequality, one proves that
\[
D\ge 4\pi^2\int_{\Om(0)}V^2\,dZ-\pi^2\int_{\Om(0)}V^2\,dZ=3\pi^2\int_{\Om(0)}V^2\,dZ,
\]
which guarantees that $D>0$. Thus in (\ref{ExpanTermAffineNew}) we obtain $C_1=-2D\gamma_\tau(0)$. As a consequence, by matching (\ref{TaylorFarField}) and (\ref{InnerFieldExpansionNew}), we see that we must impose $\eps\partial_x\gamma_\tau(0)=-2D\kappa^2\gamma_\tau(0)$. This leads us to set 
\[
\kappa=\tau\sqrt{\eps}\qquad\mbox{ with }\qquad\tau\in\R\qquad\mbox{ and }\qquad\eps>0. 
\]
Then we have obtained the Robin BC at the origin
\[
\partial_x\gamma_\tau(0)=-2\tau^2D\gamma_\tau(0).
\]
Thus finally for the far field term in (\ref{ExpansionTR0_2eta}), we deduce the spectral problem 
\begin{equation}\label{PbRobin2}
\begin{array}{|rcl}
\partial^2_x\gamma_\tau+\eta_\tau\gamma_\tau&=&0\quad\mbox{ in }I\\[3pt]
\gamma_\tau(1)&=&0\\[3pt]
\partial_x\gamma_\tau(0)&=&-2\tau^2D\gamma_\tau(0).
\end{array}
\end{equation}
\noindent From this analysis, we can state and demonstrate the following result:
\begin{theorem}\label{ThmModel2}
Pick some $\tau\in\R$. Let $\lambda^\eps_{p,\tau}$ denote the $p$-th eigenvalue of the operator $A^\eps_\tau$ defined in the geometry $\mathrm{T}^\eps_\tau=\mathrm{T}^\eps(\tau\sqrt{\eps})$ (small perturbation of the straight end). For $p\in\N^{\ast}$, there are  constants $C_{p,\tau}$, $\delta_{p,\tau}$, $\eps_{p,\tau}>0$ such that we have the estimate 
\[
\big|\lambda^\eps_{p,\tau}-(\eps^{-2}\pi^2+\eta_{p,\tau})\big|\le C_p\,\eps^{\delta_p}\qquad\quad\forall\eps\in(0;\eps_p],
\]
where $\eta_{p,\tau}$ stands for the $p$-th eigenvalue of the model problem (\ref{PbRobin2}) with Robin BC at the origin.\\
[5pt]
Moreover, we have
\begin{equation}\label{FormulaModel2a}
\eta_{1,\tau}\underset{\tau\to\pm\infty}{\sim}-4D^2\tau^4
\end{equation}
and for $p\ge2$,
\begin{equation}\label{FormulaModel2b}
\lim_{\tau\to\pm\infty}\eta_{p,\tau}=(p-1)^2\pi^2.
\end{equation}
Here the constant $D>0$ is defined in (\ref{CalculusD}) and $N_\circ$, $N_\dagger$ are introduced respectively in (\ref{DefEigenValDS}), (\ref{DefNbEigTR}).
\end{theorem}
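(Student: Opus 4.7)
The plan is to decouple the statement into two pieces: (a) the asymptotic analysis of the 1D spectral model (\ref{PbRobin2}) that yields (\ref{FormulaModel2a}) and (\ref{FormulaModel2b}), and (b) the rigorous error estimate relating $\lambda^\eps_{p,\tau}$ to $\eps^{-2}\pi^2+\eta_{p,\tau}$. The formal matched expansion leading to (\ref{PbRobin2}), with the scaling $\kappa=\tau\sqrt{\eps}$, is already carried out in the text preceding the statement, so the overall blueprint is exactly that of Theorem \ref{ThmModel1}, with two adaptations: the constant $D$ of (\ref{CalculusD}) replaces $B$, and $\tau$ is effectively replaced by $\tau^2$ in the Robin coefficient.

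First I would study (\ref{PbRobin2}) via its variational form
\[
\int_I \partial_x\gamma\,\partial_x\gamma'\,dx - 2\tau^2 D\,\gamma(0)\,\gamma'(0) = \eta\int_I \gamma\,\gamma'\,dx\qquad\forall\gamma'\in\mH^1_{0}(I;1),
\]
from which each eigenvalue $\eta_{p,\tau}$ is a continuous, strictly decreasing function of $\beta\coloneqq 2\tau^2 D\ge 0$. Since $\beta$ is even in $\tau$ and tends to $+\infty$ as $|\tau|\to\infty$, the two limits $\tau\to\pm\infty$ coincide. At $\tau=0$, one recovers the pure Neumann problem with eigenvalues $(p-1/2)^2\pi^2$. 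For $\beta>0$, writing $\gamma$ in the form $A\sinh(\sqrt{|\eta|}(1-x))$ or $A\sin(\sqrt{\eta}(1-x))$ (to satisfy $\gamma(1)=0$) and imposing the Robin condition at $x=0$ yields the dispersion relations
\[
\sqrt{|\eta|}\coth(\sqrt{|\eta|})=\beta \quad (\eta<0),\qquad\qquad \sqrt{\eta}\cot(\sqrt{\eta})=\beta\quad (\eta>0).
\]

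The function $x\mapsto x\coth x$ is strictly increasing from $1$ at $0^+$ to $+\infty$ on $(0;+\infty)$, so the first relation admits a unique negative root exactly when $\beta>1$; as $\beta\to+\infty$, $\coth(\sqrt{|\eta|})\to 1$ gives $\sqrt{|\eta|}\sim\beta=2\tau^2 D$ and hence $\eta_{1,\tau}\sim-4D^2\tau^4$, which is (\ref{FormulaModel2a}). The function $x\mapsto x\cot x$ is strictly decreasing from $+\infty$ to $-\infty$ on every interval $(k\pi;(k+1)\pi)$, $k\ge 1$, so for any $\beta\in\R$ the second relation has a unique solution $\sqrt{\eta_{p,\tau}}$ in $(k\pi;(k+1)\pi)$ with $p=k+1$; as $\beta\to+\infty$, this root approaches $k\pi$ from above, yielding $\eta_{p,\tau}\to(p-1)^2\pi^2$ for $p\ge 2$, which is (\ref{FormulaModel2b}). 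The monotonicity in $\beta$ also guarantees that this enumeration remains coherent with the ordering $\eta_{1,\tau}<\eta_{2,\tau}<\dots$ for every $\tau$.

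The real difficulty, and the main obstacle, lies in the error bound on $\lambda^\eps_{p,\tau}$. I would glue the far-field $\gamma_{p,\tau}(x)\sin(\pi y/\eps)$ to the inner expansion $u^0_\tau(z/\eps)+\kappa\,u'_\tau(z/\eps)+\kappa^2\,u''_\tau(z/\eps)$ via a smooth cutoff supported in an intermediate layer of width of order $\sqrt{\eps}$, producing a global candidate $\hat u^\eps_{p,\tau}$. Because the angle perturbation is here of size $\kappa=\tau\sqrt{\eps}$ instead of $\tau\eps$, the Taylor remainder of the Neumann condition on $\Gamma^\eps_\tau$ only falls at order $\kappa^3=\tau^3\eps^{3/2}$; I therefore have to push the inner expansion one order further than in Theorem \ref{ThmModel1} in order to control the $\mL^2$-residual of $(-\Delta-\lambda^\eps_{p,\tau})\hat u^\eps_{p,\tau}$ by $O(\eps^{1+\delta})$. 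Once this refined approximation is built, the conclusion follows from the standard Vishik--Lyusternik-type abstract spectral lemma (see \cite{Post12,Naza17,Naza18,Naza23}) together with the fact that the eigenvalues of (\ref{PbRobin2}) are simple and their spectral gaps are uniformly bounded below on compact subsets of $\tau$.
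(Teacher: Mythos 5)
Your proposal is correct and follows essentially the same route as the paper: the dispersion relations $\sqrt{|\eta|}\coth\sqrt{|\eta|}=2\tau^2D$ and $\sqrt{\eta}\cot\sqrt{\eta}=2\tau^2D$ that you derive directly are exactly those the paper obtains by observing that the spectrum of (\ref{PbRobin2}) at parameter $\tau$ coincides with that of (\ref{farfieldetaeta2}) at parameter $D\tau^2/B$, and your limit analysis, the monotonicity and evenness remarks, and the Vishik--Lyusternik blueprint for the error estimate (with the inner expansion pushed to order $\kappa^2$ because of the $\kappa=\tau\sqrt{\eps}$ scaling) all match the paper's argument.
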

\begin{remark}
Again, the constants $C_{p,\tau}$, $\delta_{p,\tau}$, $\eps_{p,\tau}>0$ depend \textit{a priori} on $\tau$ but they can be chosen uniform for $\tau$ varying in a compact set.
\end{remark}
\begin{proof}
Let us briefly explain how to obtain (\ref{FormulaModel2a}), (\ref{FormulaModel2b}) from (\ref{CalculusD}). First for $\tau=0$ in (\ref{PbRobin2}), as in (\ref{farfield_BC_N}), we get a homogeneous Neumann condition at the origin. Therefore we obtain, for $p\in\N^\ast$,
\[
\eta_{p,0}=(p-1/2)^2\pi^2,\qquad\qquad  \gamma_{p,0}(x)=C \cos(\pi(p-1/2)x),
\]
where $C$ is a normalization factor. The variational formulation associated with (\ref{PbRobin2}) writes
\begin{equation}\label{VariationalForm1New}
\begin{array}{|l}
\mbox{ Find }(\eta,\gamma)\in\R\times \mH^1_{0}(I;1)\mbox{ such that }\\[3pt]
\dsp\int_I \partial_x\gamma\partial_x\gamma' \,dx-2\tau^2D\gamma(0)\gamma'(0) = \eta \int_I \gamma\gamma' \,dx\qquad \forall \gamma'\in\mH^1_{0}(I;1),
\end{array}
\end{equation}
We remark that the eigenvalues of (\ref{PbRobin2}) are even with respect to $\tau$. Therefore, it suffices to study their features for $\tau\ge0$. Observe that the spectrum of (\ref{PbRobin2}) for $\tau=\tau_1$ coincides with the spectrum  of (\ref{farfieldetaeta2}) for $\tau=D\tau_1^2/B$. We deduce that the eigenvalues  of (\ref{PbRobin2}) are decreasing with respect to $\tau$ in $(0;+\infty)$. Moreover, by working as in (\ref{Disper1}), (\ref{Disper2}), we find 
\[
\eta_{1,\tau}\underset{\tau\to+\infty}{\sim}-4D^2\tau^4
\]
and for $p\ge2$, $\lim_{\tau\to\pm\infty}\eta_{p,\tau}=(p-1)^2\pi^2$. Again, we adopt here a formal presentation for the sake of conciseness but rigorous error estimates can be obtained.
\end{proof}
\noindent Let us comment more on the asymptotics for the diving eigenvalue. When comparing at first glance estimates (\ref{FormulaModel1a}) and (\ref{FormulaModel2a}), it seems that the fall is faster at $\alpha_0^\star$ than at $\alpha$ above $\alpha_k^\star$, $k\ge1$. Let us explain why it is not the case. Redenote by $\tau_+$, $\eta^+_{1}$ (resp. $\tau_0$, $\eta^0_{1}$) the quantities $\tau$, $\eta_{1}$ appearing in (\ref{FormulaModel1a}) (resp. (\ref{FormulaModel2a})). To get similar variations of angle $\alpha$ around $\alpha_k^\star$ and $\alpha_0^\star$, we must impose $\tau_+\eps=\tau_0\sqrt{\eps}$, that is  $\tau_0=\tau_+\sqrt{\eps}$. Thus estimate (\ref{FormulaModel2a}) writes
\[
\eta^0_{1,\tau_+}\underset{\tau_+\to+\infty}{\sim}-4D^2\tau_+^4\eps^2.
\]
Next it should be remembered that $\tau_+$ is allowed to tend to $+\infty$ but with $\tau_+\eps$ small compared to one. If we take $\tau_+=c\,\eps^{-1+s}$ with $c,s>0$, we obtain 
\[
\eta^+_{1,\tau_+}\underset{\tau_+\to+\infty}{\sim}-4B^2c^2\eps^{-2+2s},\qquad\qquad
\eta^0_{1,\tau_+}\underset{\tau_+\to+\infty}{\sim}-4D^2c^4\eps^{-2+4s}.
\]
This shows that $\eta^+_{1,\tau_+}$ dives faster to $-\infty$ than $\eta^0_{1,\tau_+}$ as $\eps\to0$.

\begin{figure}[h!]
\centering
\includegraphics[width=10.4cm]{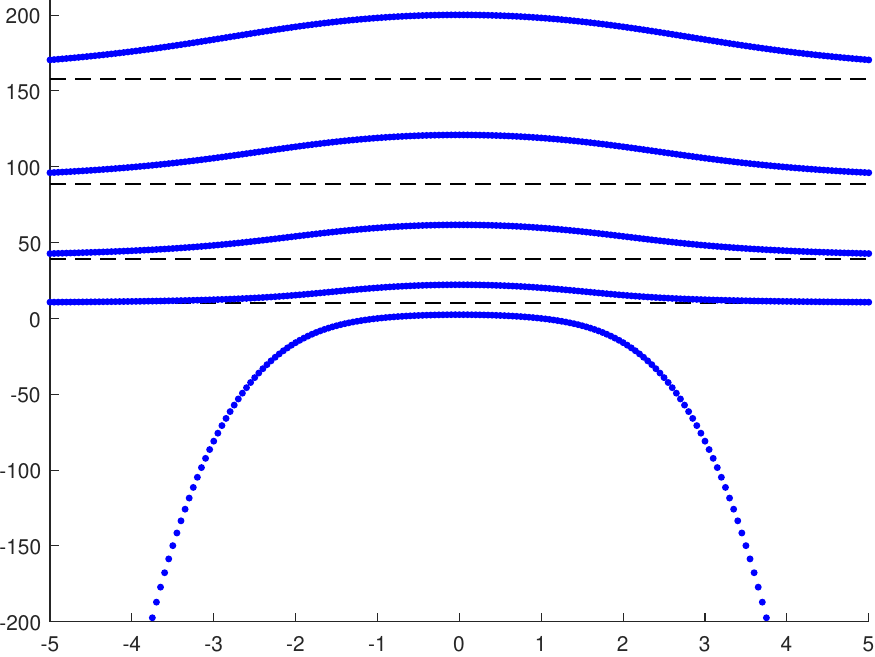}\vspace{-0.3cm}
\caption{Five first eigenvalues of the model problem (\ref{PbRobin2}) for $\tau$ varying in $(-5;5)$. The horizontal dashed lines correspond to the values of $q^2\pi^2$, $q=1,2,\dots,5$. Here we take $2D=1$.\label{BehaviourEigenZero}}
\end{figure}

\noindent To illustrate the results of Theorem \ref{ThmModel2}, we solve (\ref{VariationalForm1New}) with a finite element method and represent the five first eigenvalues for $\tau_0$ varying in $(-5;5)$ in Figure \ref{BehaviourEigenZero}. Again, let us emphasize that for $\tau_+$ sweeping the interval $(-30;30)$ for the model Problem (\ref{farfieldetaeta2}), $\tau_0$ belongs to $(-30\sqrt{\eps};30\sqrt{\eps})$ for the model Problem (\ref{PbRobin2}). For $\eps=0.02$ as in the numerics of Figure \ref{SpectrumVsAlpha}, this gives approximately $\tau_0\in(-5;5)$. In other words, the ranges of $\tau$ in Figures \ref{BehaviourEigen}, \ref{BehaviourEigenZero} have been fixed so that they correspond to the same interval of variation of $\alpha$ for models (\ref{farfieldetaeta2}) and (\ref{PbRobin2}). The results obtained in Figure \ref{BehaviourEigenZero} corroborate formulas (\ref{FormulaModel2a})-(\ref{FormulaModel2b}). We also clearly observe the milder variation of the spectrum with respect to the perturbation of the angle compared to Figure \ref{BehaviourEigen}. Let us mention that this mild behavior of the eigenvalues at the origin is not completely surprising due to 
the evenness and smoothness with respect to the perturbation of the geometry.

\subsection{Numerical experiments in the broken strip}\label{ParaNumBrokenStrip}

\begin{figure}[h!]
\centering
\includegraphics[width=5.2cm]{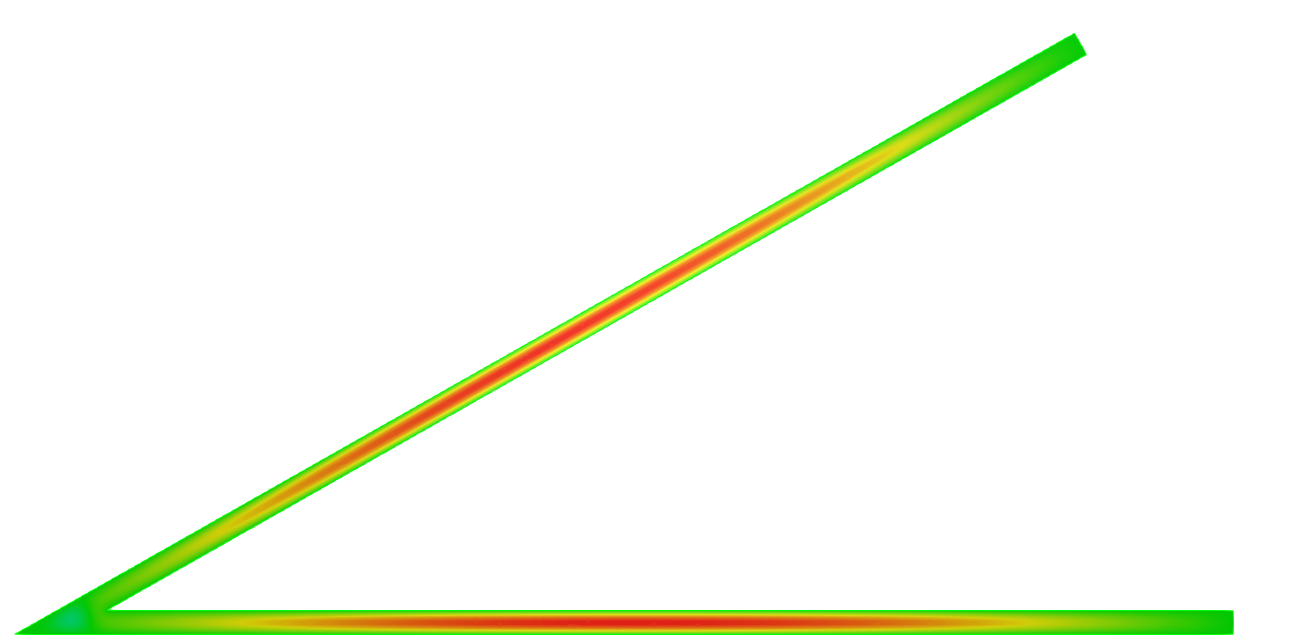}\  \includegraphics[width=5.2cm]{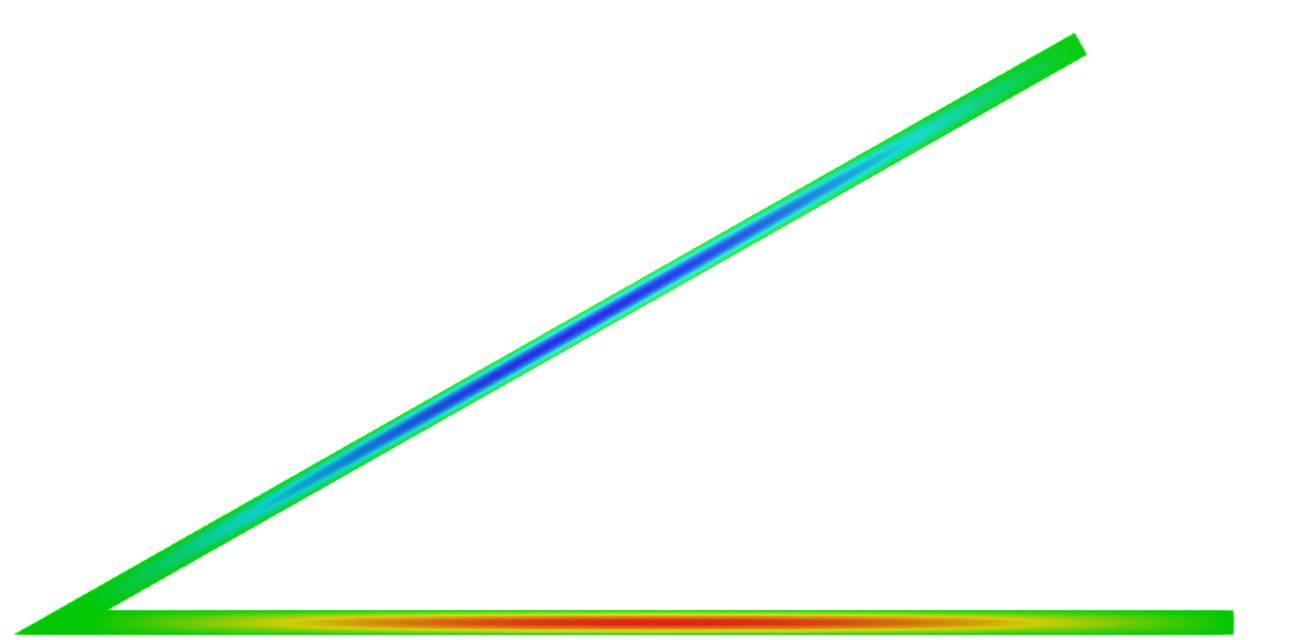}\  \includegraphics[width=5.2cm]{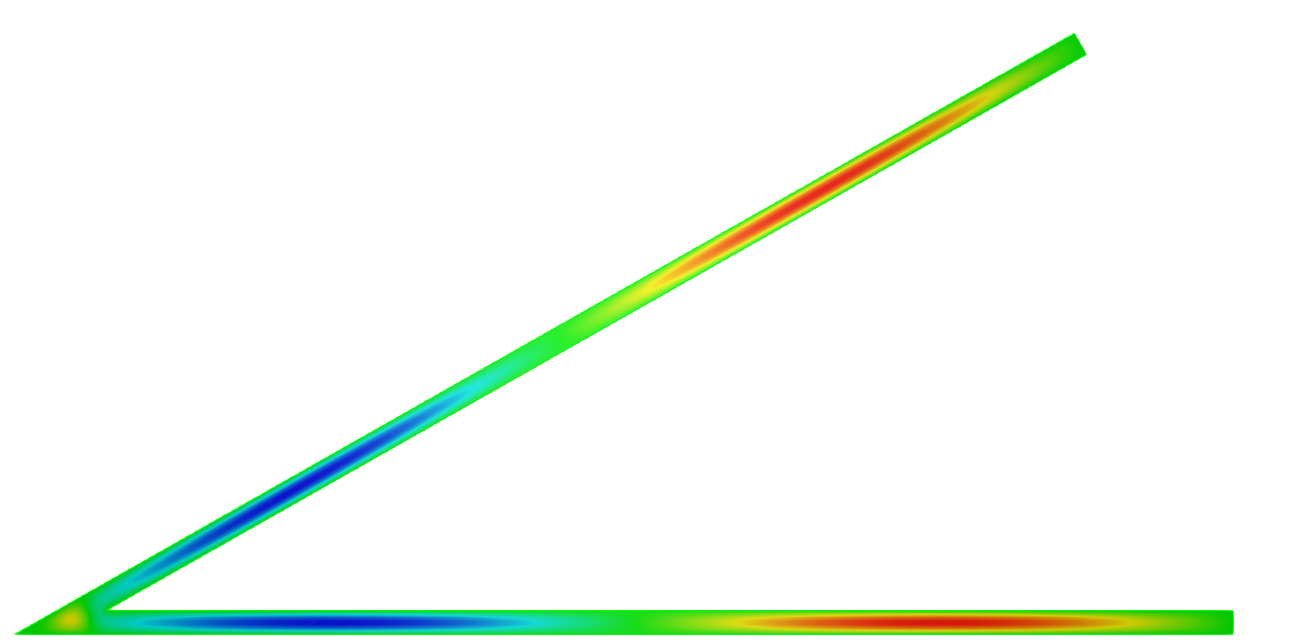}\\[3pt]
\includegraphics[width=5.2cm]{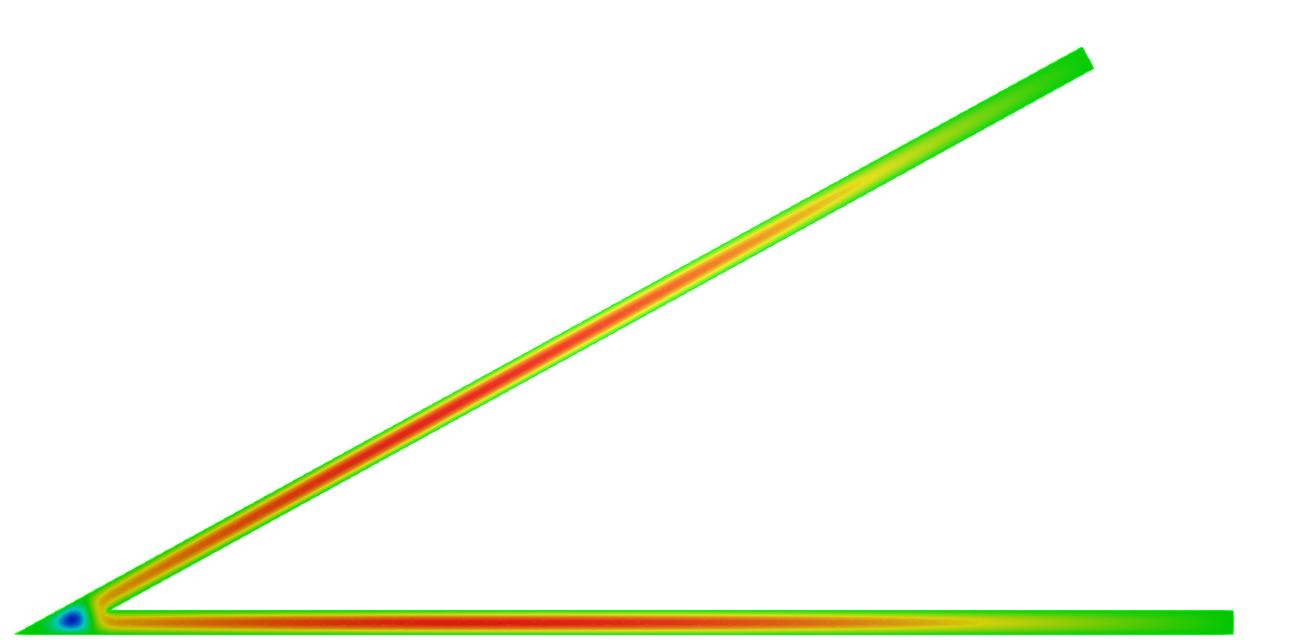}\  \includegraphics[width=5.2cm]{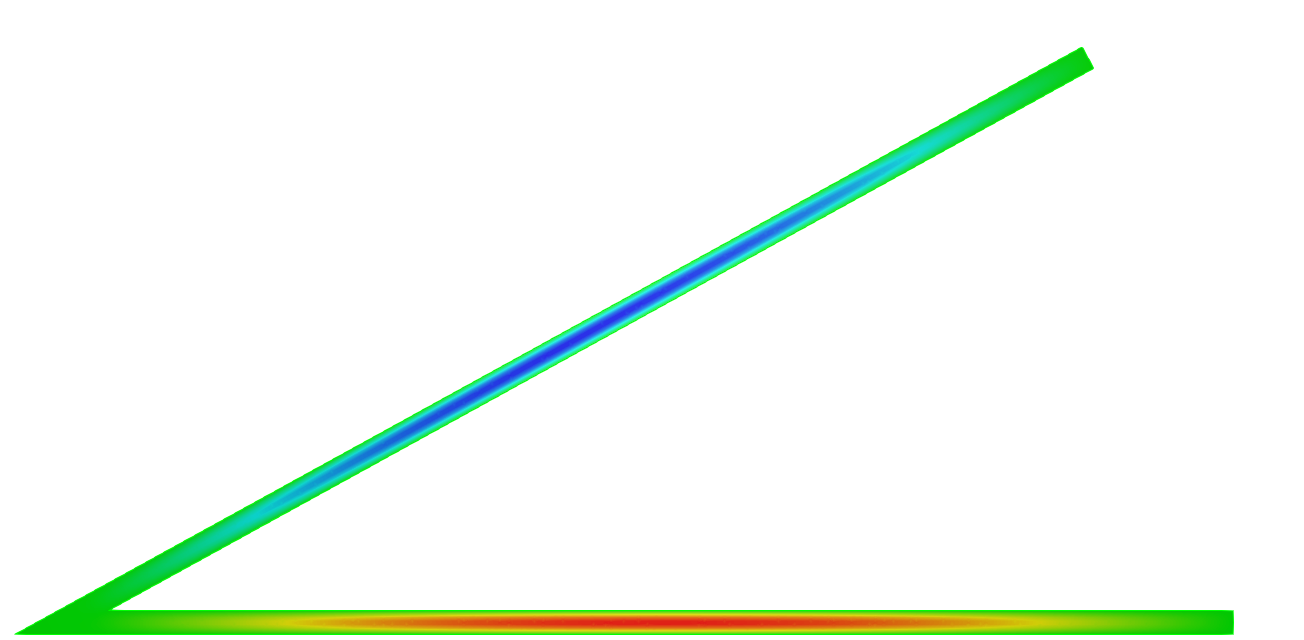}\  \includegraphics[width=5.2cm]{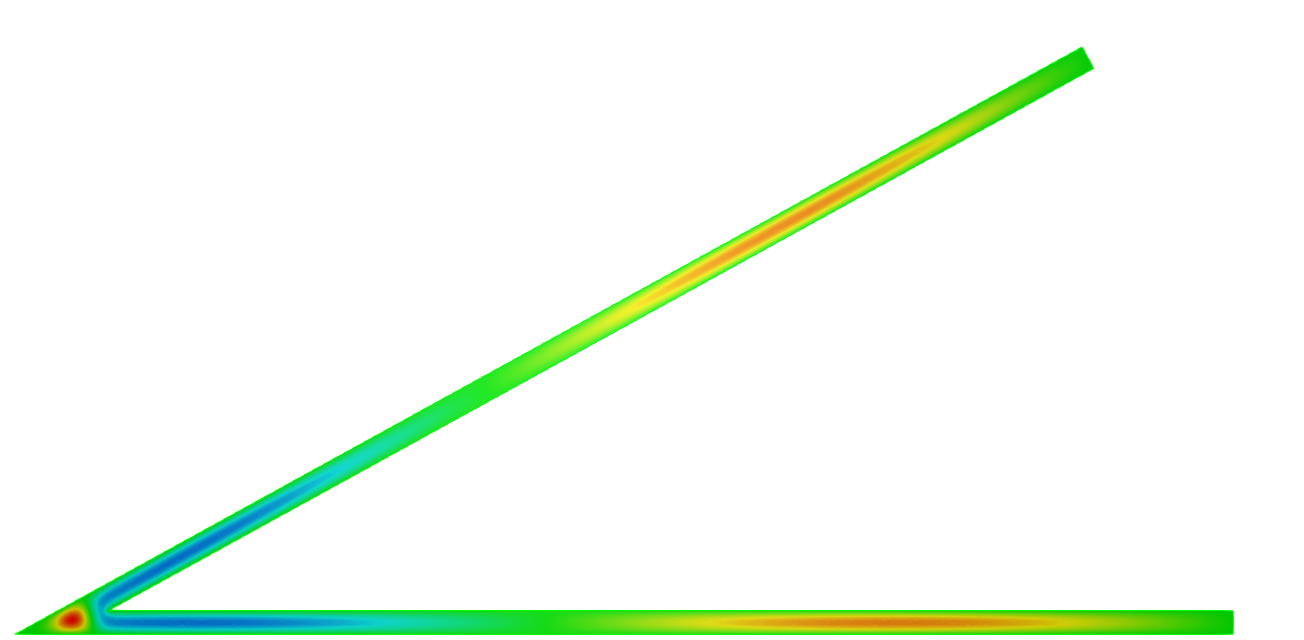}\\[3pt]
\includegraphics[width=5.2cm]{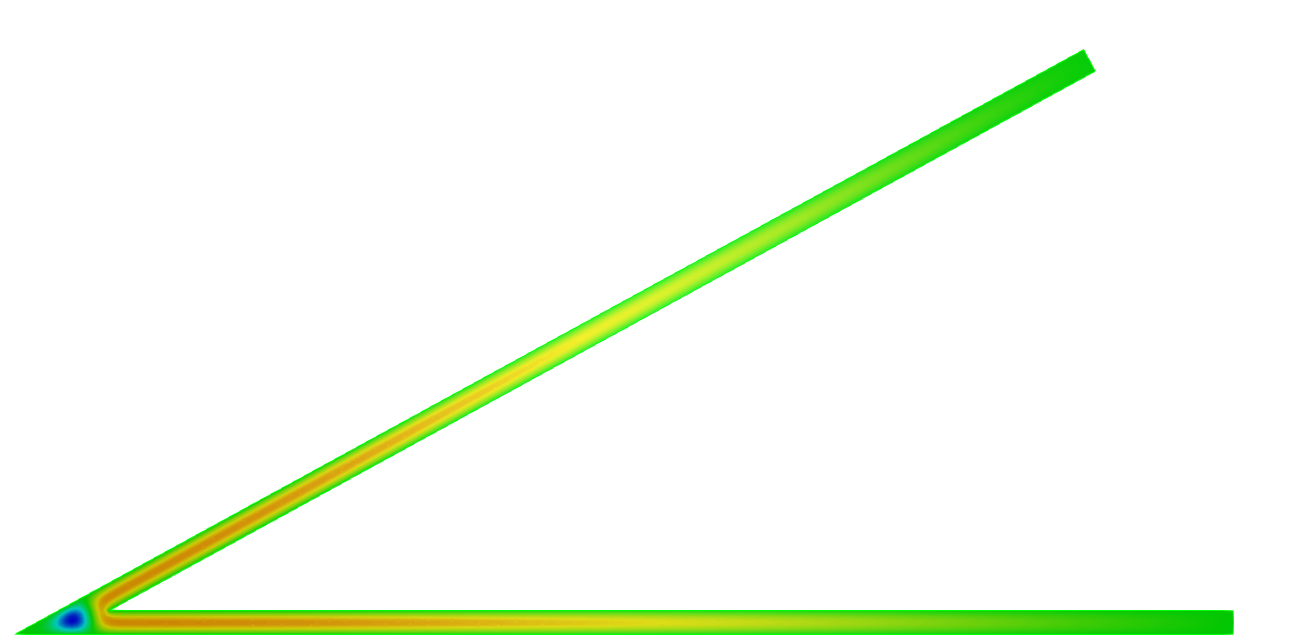}\  \includegraphics[width=5.2cm]{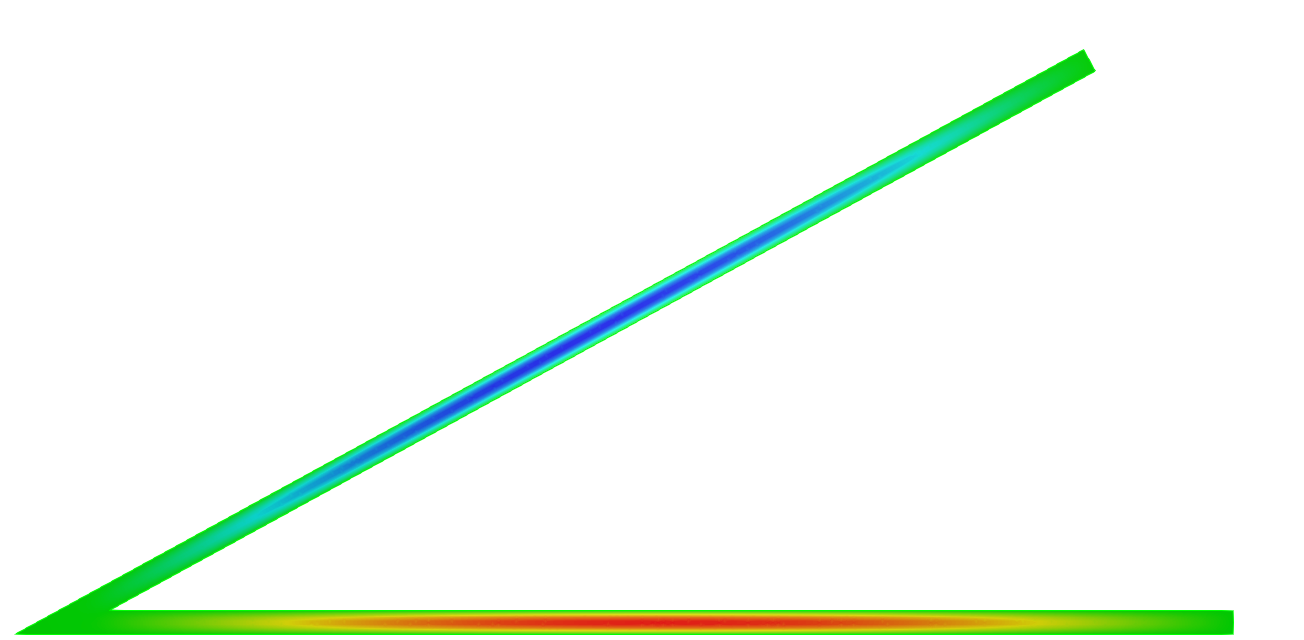}\  \includegraphics[width=5.2cm]{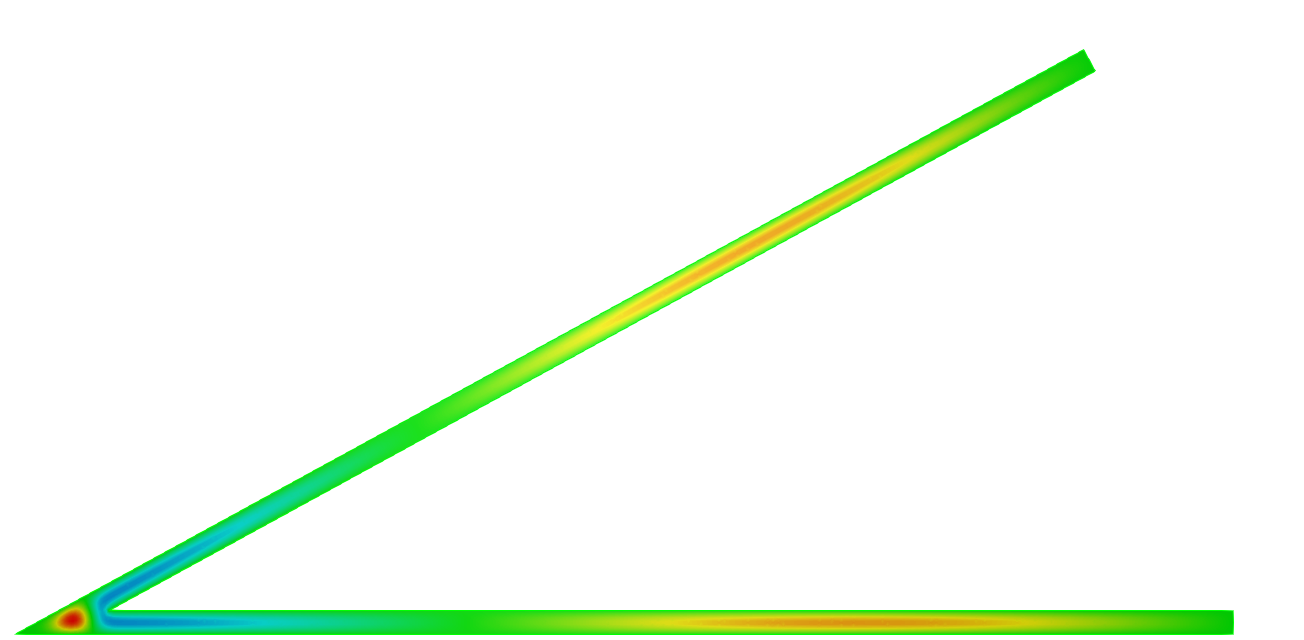}\\[3pt]
\includegraphics[width=5.2cm]{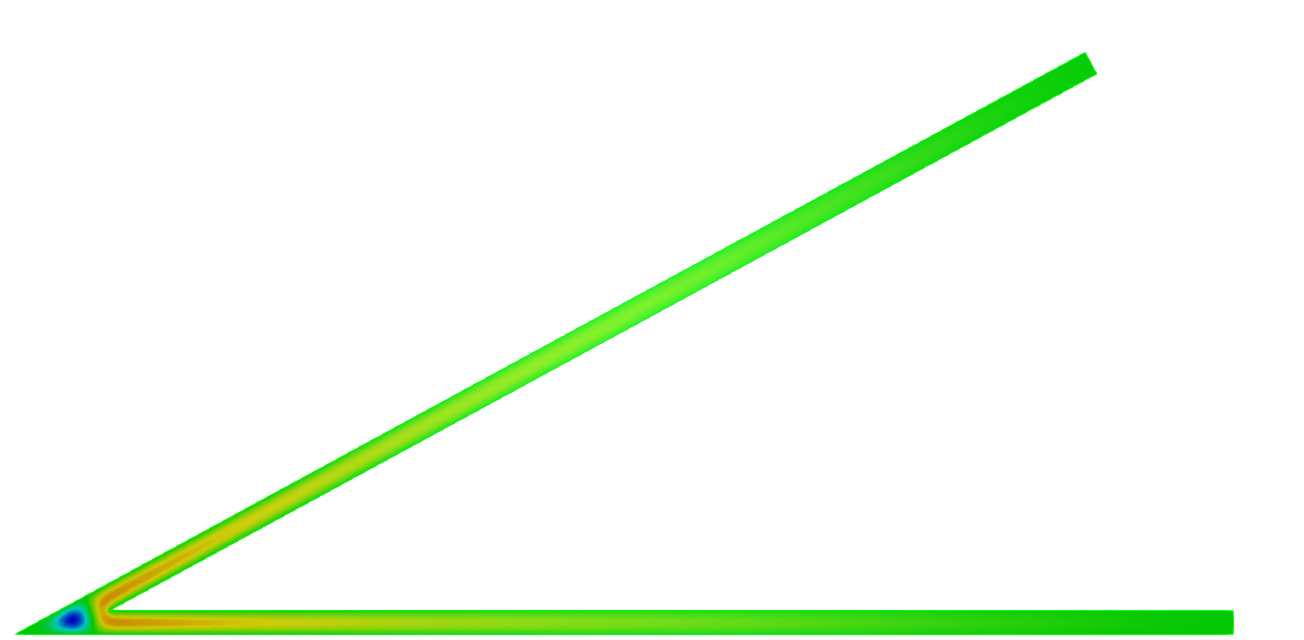}\  \includegraphics[width=5.2cm]{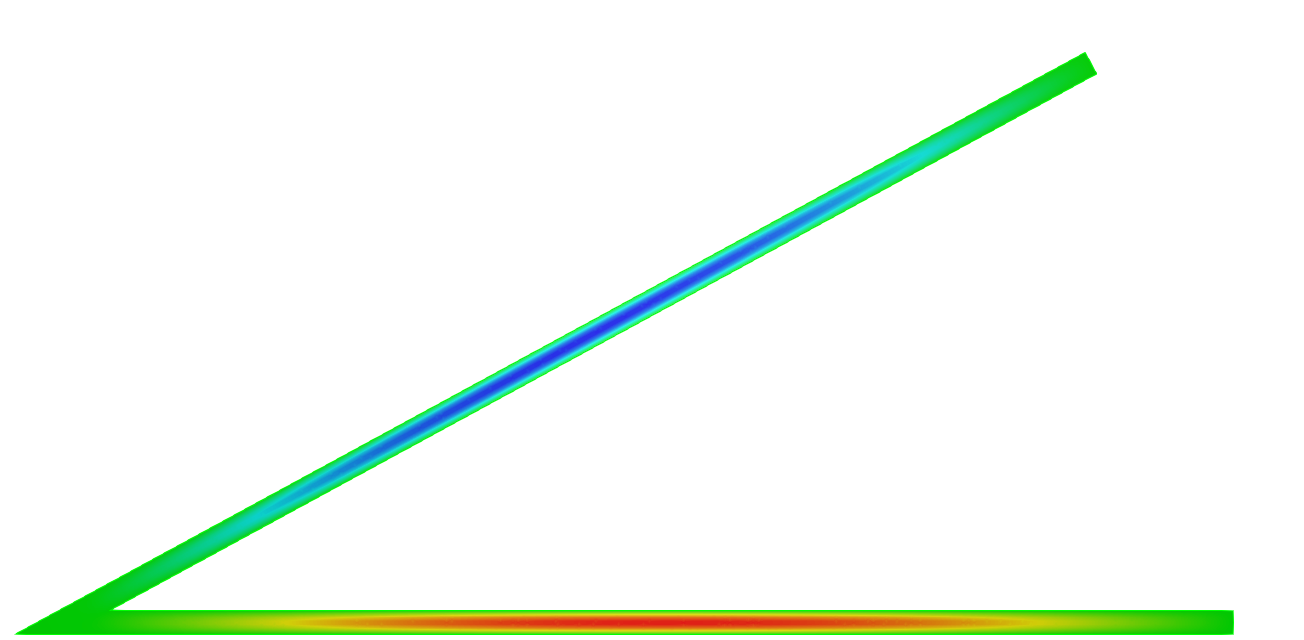}\  \includegraphics[width=5.2cm]{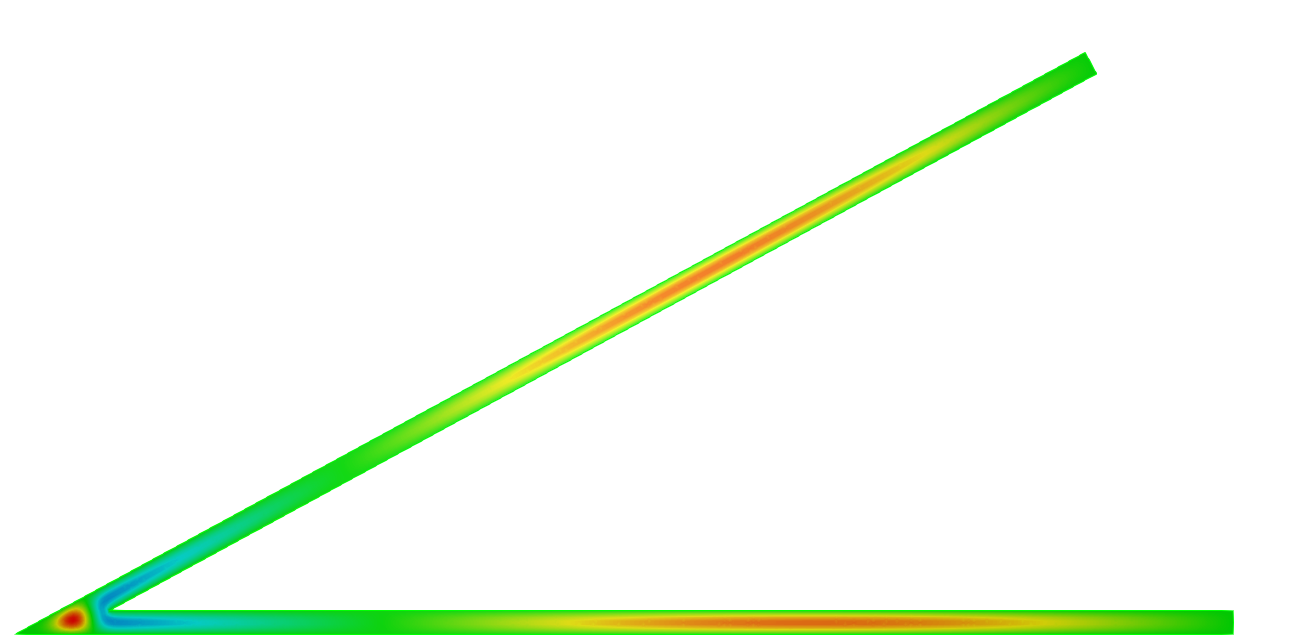}\\[3pt]
\includegraphics[width=5.2cm]{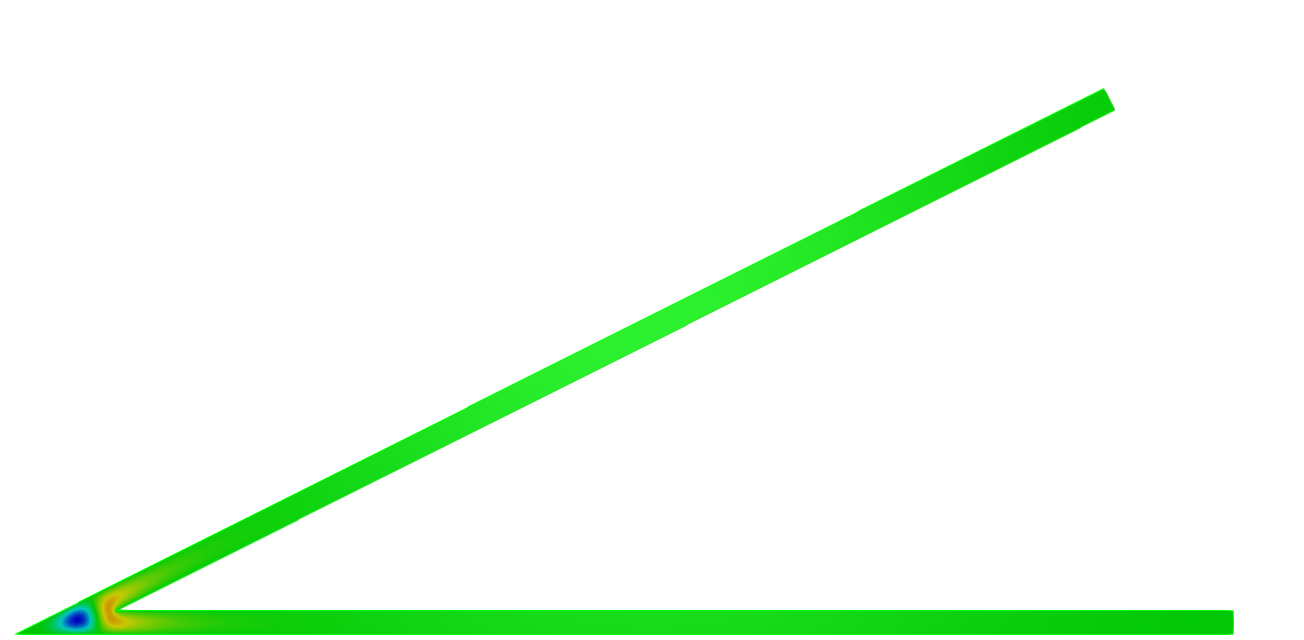}\  \includegraphics[width=5.2cm]{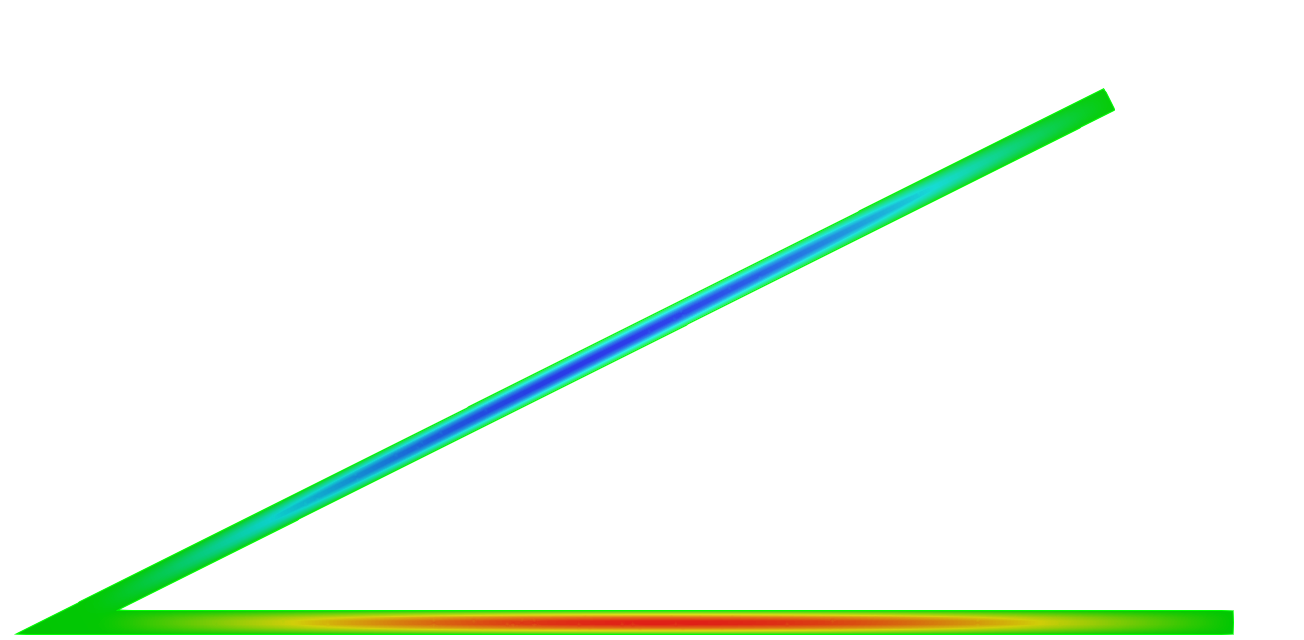}\  \includegraphics[width=5.2cm]{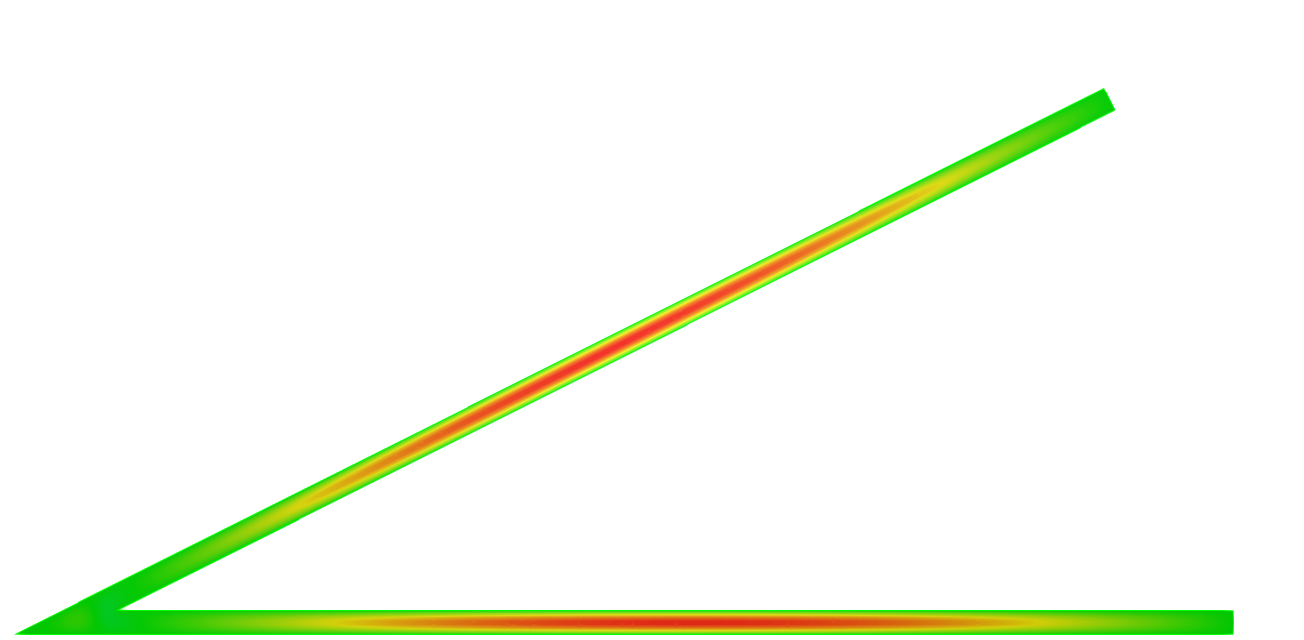}\\[3pt]
\caption{Eigenfunctions of the Dirichlet Laplacian in the broken strips obtained by symmetrization of the trapezoids $\mathrm{T}^\eps$ with respect to the line $\ell_{\alpha}=\{(x,y)\in\R^2\,|\,x=y\tan\alpha\}$. On each line, the geometry is the same and the eigenfunctions are associated with three different eigenvalues around $\pi^2/\eps^2$. From top to bottom, $\alpha$ is slightly increased around the first positive critical angle $\alpha^\star_1$.
\label{BrokenStrip}}
\end{figure}

\noindent Let us give a few numerical results in the initial broken strip represented in Figure \ref{GeomBrokenStrip} right.  More precisely, in Figure \ref{BrokenStrip} we display eigenfunctions of the Dirichlet Laplacian in domains obtained by symmetrization of the trapezoids $\mathrm{T}^\eps$ with respect to the line $\ell_{\alpha}\coloneqq\{(x,y)\in\R^2\,|\,x=y\tan\alpha\}$. From top to bottom, the angle $\alpha$ is slightly increased around the first positive critical $\alpha^\star_1$ and each column corresponds to a different eigenvalue. The behaviors of the eigenfunctions of the first and third columns, which are symmetric with respect to $\ell_{\alpha}$, are in agreement with the results presented in this work concerning Problem (\ref{MainProblem}) with mixed boundary conditions. In particular, the eigenfunction of the first column is the one corresponding to the eigenvalue which dives below $\pi^2/\eps^2$. While it spreads in the whole domain for $\alpha<\alpha^\star_1$, it becomes localized in the vicinity of the tip for $\alpha>\alpha^\star_1$. The eigenfunction of the third column has one node in the interior of each of the two branches when $\alpha<\alpha^\star_1$ whereas it has zero for $\alpha>\alpha^\star_1$. On the other hand, the eigenfunction of the second column, which is skew-symmetric with respect to $\ell_{\alpha}$, does not present particular changes around $\alpha^\star_1$.
\section{Two auxiliary results}\label{SectionAuxRes}
In this section, we first establish a technical result needed in the derivation of the model problem for $\alpha$ varying around a positive critical angle (see (\ref{CalculusB})).

\begin{proposition}\label{PropositionSigneIntegral}
For $\alpha\in(0;\pi/2)$, let $v$ be a non zero function of the space $\mX_{\mrm{bo}}$ defined in (\ref{DefSpaceX}). There holds
\begin{equation}\label{MainIntegral}
\int_{\Gamma}s\left(  |\partial_sv|^2-\pi^2|v|^2\right)\,ds>0.
\end{equation}
\end{proposition}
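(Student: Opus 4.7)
The plan is to exploit a Rellich--Pohozaev identity with the multiplier $(\bm{m}\cdot\nabla v)$ for $\bm{m}=(X,0)$, combined with a conservation law holding on the straight part of $\Om$, to reduce $B$ to a weighted integral over a corner triangle whose sign can be pinned down by a direct integration by parts.

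First I will handle integrability. Near the two corners of $\Om$ (at the origin with opening angle $\pi/2-\alpha$ and at $(\tan\alpha,1)$ with opening angle $\pi/2+\alpha$, both carrying mixed Dirichlet--Neumann conditions), classical Kondratiev theory yields leading singular exponents $\pi/(\pi\mp 2\alpha)>0$, so $v\in\mH^{1+\delta}$ locally for some $\delta>0$ and $s(|\partial_s v|^2-\pi^2|v|^2)$ is integrable on $\Gamma$. Then, multiplying $\Delta v+\pi^2 v=0$ by $\bm{m}\cdot\nabla v$ and integrating over $\Om_L:=\Om\cap\{X<L\}$ with $L>\tan\alpha$, one obtains
\[
\int_{\partial\Om_L}\!\Bigl[\tfrac12(\bm{m}\cdot\bfn)(|\nabla v|^2-\pi^2 v^2)-(\partial_n v)(\bm{m}\cdot\nabla v)\Bigr]d\sigma=\tfrac12\int_{\Om_L}\bigl[(\partial_Y v)^2-(\partial_X v)^2-\pi^2 v^2\bigr]dZ.
\]
On the Dirichlet pieces $\Sigma_L$ the boundary contribution vanishes because $\bm{m}\cdot\nabla v=X\partial_X v=0$ there, and on $\Gamma$ the identity $\bm{m}\cdot\bfn=-X\cos\alpha=-s\sin\alpha\cos\alpha$ with $\partial_n v=0$ produces exactly $-\tfrac12\sin\alpha\cos\alpha\,B$.

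Next I invoke the conservation law $E(X):=\int_0^1[(\partial_Y v)^2-(\partial_X v)^2-\pi^2 v^2]\,dY\equiv 0$ for $X>\tan\alpha$: differentiating and using the PDE together with $\partial_X v|_{Y=0,1}=0$ yields $E'(X)=0$, while the decomposition $v=C_0\sin(\pi Y)+\widetilde v$ with $\widetilde v$ exponentially decaying and $\int_0^1\cos(2\pi Y)\,dY=0$ give $E(+\infty)=0$. This kills the contribution of $\Gamma_L=\{L\}\times(0,1)$ to the left-hand side (for every $L>\tan\alpha$) and confines the bulk integral on the right-hand side to the corner triangle $\mathcal{C}:=\{0<X<\tan\alpha,\ 0<Y<X/\tan\alpha\}$. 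Using the pointwise identity $(\partial_Y v)^2-(\partial_X v)^2-\pi^2 v^2=\div(v\nabla v)-2(\partial_X v)^2$ (obtained from $\div(v\nabla v)=|\nabla v|^2+v\Delta v$) and the divergence theorem on $\mathcal{C}$, exploiting $v=0$ on $Y=0$ and $\partial_n v=0$ on the slanted side $\Gamma$, the identity collapses to
\[
-\sin\alpha\cos\alpha\,B=J:=\int_0^1 v(\tan\alpha,Y)\,\partial_X v(\tan\alpha,Y)\,dY-2\int_\mathcal{C}(\partial_X v)^2\,dZ.
\]

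To show $J\le 0$, I use the Fourier decomposition on the strip $X>\tan\alpha$: bounded solutions with Dirichlet data on $Y=0,1$ take the form $v(X,Y)=C_0\sin(\pi Y)+\sum_{k\ge 2}B_k e^{-\pi\sqrt{k^2-1}X}\sin(k\pi Y)$, and orthogonality gives
\[
\int_0^1 v(\tan\alpha,Y)\,\partial_X v(\tan\alpha,Y)\,dY=-\tfrac12\sum_{k\ge 2}\pi\sqrt{k^2-1}\,B_k^2\,e^{-2\pi\sqrt{k^2-1}\tan\alpha}\le 0.
\]
Both terms in the expression for $J$ are therefore non-positive, hence $B\ge 0$. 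Strict positivity follows by unique continuation: $J=0$ forces $B_k=0$ for all $k\ge 2$ (so $v=C_0\sin(\pi Y)$ on the strip part) and $\partial_X v\equiv 0$ on $\mathcal{C}$, whence the Neumann condition $\partial_n v=0$ on $\Gamma$ reduces to $\partial_Y v=0$ on $\Gamma$, which together with $v|_{Y=0}=0$ forces $v\equiv 0$ on $\mathcal{C}$; matching across $X=\tan\alpha$ then yields $C_0=0$, contradicting $v\neq 0$. The most delicate point I expect is justifying the Fourier decomposition and the term-by-term orthogonality at $X=\tan\alpha$ in view of the mild corner singularity of $v$ at $(\tan\alpha,1)$: one needs sharp Kondratiev trace estimates for $v$ on the segment $\{\tan\alpha\}\times(0,1)$.
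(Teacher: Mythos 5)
Your proof is correct, and it is built on the \emph{same} key device as the paper's: a Rellich--Morawetz identity with the multiplier $\bm{m}\cdot\nabla v=X\,\partial_X v$. Where you diverge is in how the resulting bulk term is exploited. The paper simply combines the identity with the global energy relation $\int_\Om(|\nabla v|^2-\pi^2 v^2)\,dZ=0$ (obtained by multiplying the equation by $v$) to arrive directly at
\[
\int_\Gamma s\bigl((\partial_s v)^2-\pi^2 v^2\bigr)\,ds=\frac{2}{\sin\alpha\cos\alpha}\int_\Om(\partial_X v)^2\,dZ,
\]
and concludes strict positivity by noting that $\partial_X v\equiv 0$ together with $\partial_n v=0$ on $\Gamma$ yields $v|_\Gamma=\partial_n v|_\Gamma=0$, whence $v\equiv 0$ by unique continuation. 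You instead use the local conservation law $E(X)\equiv 0$ on the straight strip to confine the bulk integral to the corner triangle $\mathcal{C}$, rewrite it via $\div(v\nabla v)$, and then need a separate Fourier argument on the strip to identify the sign of the trace term $\int_0^1 v\,\partial_X v\,dY$. This is correct, but longer, and it is precisely the Fourier step that introduces the trace-regularity worry you yourself flag at the end (justified: the relevant Kondratiev exponent $\lambda_2=\pi/(\pi+2\alpha)$ satisfies $\lambda_2>1/2$, so $\partial_X v(\tan\alpha,\cdot)\in\mL^2(0;1)$ and the orthogonality is legitimate, but this requires an argument the paper's route never needs). A further point worth noticing is that the two final expressions are literally the same: the nonnegative Fourier sum you produce equals $2\int_{X>\tan\alpha}(\partial_X v)^2\,dZ$, so your $-J=\int_0^1(-v\,\partial_X v)|_{X=\tan\alpha}\,dY+2\int_{\mathcal{C}}(\partial_X v)^2\,dZ$ collapses to the paper's $2\int_\Om(\partial_X v)^2\,dZ$. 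In short: same multiplier, same conclusion, but the paper's organization using the global energy relation avoids both the triangle decomposition and the Fourier/trace machinery, making the corner-regularity issues considerably milder.
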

\begin{proof}
Let $v$ be an element of $\mX_{\mrm{bo}}\setminus\{0\}$. By working on the real and imaginary parts of $v$, without lost of generality, we can assume that $v$ is real valued. The function $v$ is bounded, belongs to $\mH^1_0(\Om_L;\Sigma_L)$ for all $L>0$ (see the definition of that space after (\ref{Pb_Approximation})) and satisfies (\ref{NearFieldPb}).\\
\newline
First let us formally apply the classical theory of regularity of solutions to elliptic problems in domains with singular geometries, see \textit{e.g.} \cite{Kond67,KoMR97}, to check that the integral (\ref{MainIntegral}) is well defined. 
Set $P_1\coloneqq(0,0)$, $P_2\coloneqq(\tan\alpha,1)$ (the two non smooth points of $\partial\Om$) and for $i=1,2$, introduce $(r_i,\theta_i)$ polar coordinates associated with $P_i$. At $P_1$, $P_2$, the principal singularities locally in $\mH^1$ write respectively
\begin{equation}\label{CalculSingularites}
r^{\lambda_1}\sin(\lambda_1\theta_1),\quad r^{\lambda_2}\sin(\lambda_2\theta_2)\qquad\mbox{with }\qquad\lambda_1\coloneqq\frac{\pi}{\pi-2\alpha},\quad \lambda_2\coloneqq\frac{\pi}{\pi+2\alpha}.
\end{equation}
A direct calculus guarantees that the function $r\mapsto r^{\nu}$ belongs to $\mL^2(0;1)$ if and only if $\nu>-1$. As a consequence, in order the integral (\ref{MainIntegral}) to be defined, we need to have both for $j=1$ and $2$
\[
1+2(\lambda_j-1)>-1\qquad\Leftrightarrow\qquad \lambda_j>0.
\]
From (\ref{CalculSingularites}), we see that this is true for all $\alpha\in(0;\pi/2)$.\\
Now we prove that the integral which appears in (\ref{MainIntegral}) is positive. To proceed, let us exploit the classical modification of the Rellich trick \cite{Rell43} due to C.S. Morawetz (see \textit{e.g.} \cite{MoLu68}). If $v\not\equiv0$ solves (\ref{NearFieldPb}), multiplying the EDP by $x\partial_xv$
and integrating by parts in $\Om$ (observe that $x\partial_xv$ decays exponentially at infinity), we get
\begin{equation}\label{Relation0}
\begin{array}{rcl}
0&=&\dsp\int_{\Om}\nabla v\cdot\nabla(x\partial_xv)-\pi^2xv\partial_xv\,dz\\[8pt]
&=&\dsp\int_{\Om}(\partial_xv)^2\,dz+\cfrac{1}{2}\int_{\Om}x\partial_x(|\nabla v|^2-\pi^2v^2)\,dz.
\end{array}
\end{equation}
Next, integrating by parts in $\Om_L=\{z\in\Om\,|\,x<L\}$ because \textit{a priori} $v$ does not decay at infinity, we can write
\[
\begin{array}{ll}
&\dsp\int_{\Om}x\partial_x(|\nabla v|^2-\pi^2v^2)\,dz  =  \underset{L\to+\infty}{\lim} \dsp\int_{\Om_L}x\partial_x(|\nabla v|^2-\pi^2v^2)\,dz \\[10pt]
 = & -\dsp\cos\alpha\int_{\Gamma}x(|\nabla v|^2-\pi^2v^2)\,ds-\underset{L\to+\infty}{\lim} \dsp\int_{\Om_L}|\nabla v|^2-\pi^2v^2\,dz-\int_{\Sigma_L}x(|\nabla v|^2-\pi^2v^2)\,dy\\[10pt]
  = & -\dsp\cos\alpha\int_{\Gamma}x(|\nabla v|^2-\pi^2v^2)\,ds- \dsp\int_{\Om}|\nabla v|^2-\pi^2v^2\,dz.
\end{array}
\]
Note that on $\Gamma$, since $\partial_nv=0$, we have $|\nabla v|=|\partial_sv|$. On the other hand, multiplying the identity $\Delta v+\pi^2v=0$ in $\Om_L$ by $v$, integrating by parts and taking the limit $L\to+\infty$, we obtain 
\begin{equation}\label{Relation3}
\int_{\Om}|\nabla v|^2-\pi^2v^2\,dz=0.
\end{equation}
By combining (\ref{Relation0})--(\ref{Relation3}),  we arrive at
\[
\cos\alpha\,\int_{\Gamma}x\,((\partial_s v)^2-\pi^2v^2)\,ds=2\dsp\int_{\Om}(\partial_xv)^2\,dz
\]
(again we emphasize that for $v\in\mX_{\mrm{bo}}$, the partial derivative of $v$ with respect to $x$ decays exponentially at infinity and so belongs to $\mL^2(\Om)$). Finally, since we have $x=s\sin\alpha$ on $\Gamma$, we find
\[
\int_{\Gamma}s\left((\partial_sv)^2-\pi^2v^2\right)\,ds=\cfrac{2}{\cos\alpha\sin\alpha}\,\dsp\int_{\Om}(\partial_xv)^2\,dz.
\]
Note that the right-hand side above is positive. Indeed otherwise we would have $\partial_xv=0$ on $\Gamma$ and so $\partial_sv=0$ on $\Gamma$. In that case, the conditions $v(P_1)=v(P_2)=0$ would imply $v=0$ on $\Gamma$. But we also have $\partial_nv=0$ on $\Gamma$. From the unique continuation principle (see \textit{e.g.} \cite{Bers}, \cite[\S8.3]{CoKr98} or \cite{Rond09}), this would give $v\equiv0$ in $\Om$.
\end{proof}
\noindent Finally we establish Proposition \ref{PropoDiscrete}.\\
\newline
\textit{Proof of Proposition \ref{PropoDiscrete}.}
Consider some positive critical angle $\alpha^\star$ and for simplicity denote $\Om$, $\Om^\eps$ the domain $\Om(\alpha^\star)$, $\Om(\alpha^\star+\eps)$. Let $W^\eps$ be the solution introduced in (\ref{ScaSol}) defined in $\Om^\eps$. It admits the decomposition 
\[
W^\eps  = w^{\mrm{in}}+\mathbb{S}^\eps w^{\mrm{out}}+\widetilde{W}^\eps,
\]
with $\mathbb{S}^\eps$ belonging to the unit circle in the complex plane and $\widetilde{W}^\eps\in\mH^1(\Om^\eps)$. Let us compute asymptotic expansions of  $W^\eps$, $\mathbb{S}^\eps$ as $\eps\to0$. It is natural to work  with the ans\"atze
\begin{equation}\label{ExpansionScaSol}
W^\eps=W+\eps W'+\dots,\qquad\qquad \mathbb{S}^\eps=\mathbb{S}+\eps \mathbb{S}'+\dots
\end{equation}
where $W$, $\mathbb{S}$ are the quantities appearing in (\ref{ScaSol}) and $W'$, $\mathbb{S}'$ are to be determined. Inserting (\ref{ExpansionScaSol}) into the problem satisfied by $W^\eps$, taking the limit $\eps\to0$ and collecting the terms of order $\eps$, by exploiting condition (\ref{BdryConditionP}), we find that $W'$ must satisfy
\[
\begin{array}{|rcll}
\Delta W'+\pi^2W'&=&0&\mbox{ in }\Om\\
W'&=&0&\mbox{ on } \Sigma\\[3pt]
\partial_n W'  &=& -\partial_sW- s(\partial^2_sW+\pi^2W) & \mbox{ on }\Gamma.
\end{array}
\]
Next, we write
\[
\begin{array}{rcl}
0&=&\dsp\underset{L\to+\infty}{\lim} \dsp\int_{\Om_L}(\Delta W'+\pi^2W')\overline{W}-W'(\Delta \overline{W}+\pi^2\overline{W})\,dz\\[10pt]
& =& \dsp\int_{\Gamma}s\left(|\partial_sW|^2-\pi^2|W|^2\right)\,ds+\dsp\underset{L\to+\infty}{\lim}\int_{\Sigma_L}\partial_xW' \overline{W}-W'\partial_x\overline{W}\,dy\\[10pt]
& =& \dsp\int_{\Gamma}s\left(|\partial_sW|^2-\pi^2|W|^2\right)\,ds+i\mathbb{S}'\overline{\mathbb{S}}.
\end{array}
\]
Thus for the derivative of $\mathbb{S}$ with respect to $\alpha$, we obtain
\begin{equation}\label{ComputationDerivative}
\mathbb{S}'=i\mathbb{S}\dsp\int_{\Gamma}s\left(|\partial_sW|^2-\pi^2|W|^2\right)\,ds.
\end{equation}
Now let us exploit that $\alpha^\star$ is a positive critical angle. In that case we know from (\ref{ProofBounded}) that $W$ is bounded, \textit{i.e.} that $W$ belongs to $\mX_{\mrm{bo}}$. Then Proposition \ref{PropositionSigneIntegral} ensures that the integral appearing on the right-hand side of (\ref{ComputationDerivative}) is positive. This proves that
\[
\mathbb{S}'=\cfrac{\partial\mathbb{S}}{\partial\alpha}(\alpha^\star)\ne0.
\]
Formula (\ref{ComputationDerivative}) also shows that $\alpha\mapsto \mathbb{S}(\alpha)$ runs counter-clockwise on the unit circle in the complex plane when $\alpha$ increases in a neighborhood of $\alpha^\star$, which is indeed what we observe in the numerics of Figure \ref{TRPhase}. On the other hand, this guarantees that there holds $\mathbb{S}(\alpha)\ne-1$ for $\alpha\in[\alpha^\star-c;\alpha^\star+c]\setminus\{\alpha^\star\}
$ for $c>0$ small enough, which implies that the set of critical angles is discrete. \hfill $\square$

\section{Concluding remarks}

\noindent In this work, we considered the spectrum of the  Laplace operator with mixed boundary conditions in thin trapezoids characterized by an angle $\alpha$. By working with symmetries, this allows one to study the spectrum of the Dirichlet Laplacian in thin broken strips as pictured in Figure \ref{BrokenStrip}. When varying  continuously $\alpha$, at certain particular values $\alpha^\star_k$, $k\in\N$, we proved a phenomenon of eigenvalue fall around the normalized threshold $\pi^2/\eps^2$. The fall is milder at $\alpha^\star_0=0$ than at $\alpha^\star_k>0$, $k\ge1$. Moreover, the eigenfunctions associated with the eigenvalue diving below $\pi^2/\eps^2$ experiment a notable change of shape: while they spread in the whole domain $\mathrm{T}^\eps$ for $\alpha<\alpha^\star_k$, they become localized in the vicinity of the tip for $\alpha>\alpha^\star_k$. Let us mention that this is very particular to Dirichlet BC and does not appear with Neumann BC. Indeed, for Neumann BC the continuous spectrum of the near field operator starts at zero and the corresponding $\mX_{\dagger}$ is always of dimension one (due to constants), independently of the geometry.\\
\newline
Around this study, several questions remain open. First, we assumed that there are positive critical angles $\alpha^\star_k$, $k\ge1$, where $\mX_{\dagger}\ne0$. Though we observe them numerically (see again Figure \ref{TRPhase}), we do not have a proof that there exists at least one. When $\alpha$ increases, $\alpha\mapsto \mathbb{S}(\alpha)$ runs continuously on the unit circle in the complex plane but Figure \ref{TRPhase} indicates that it is not monotone, which complicates the analysis. One idea maybe would be to look for an asymptotic expansion of $\mathbb{S}(\alpha)$ as $\alpha$ tends to zero to obtain an explicit behavior allowing one to conclude to a passage through $-1$. Another intriguing question concerns the space $\mX_{\mrm{tr}}$ of trapped modes at the threshold for the operator $A^\Om$. Can we have $\mX_{\mrm{tr}}\ne\{0\}$ for certain angles? This is unclear. To pursue this analysis, it might be interesting to investigate the spectrum of the Dirichlet Laplacian in 3D polyhedral domains of thickness $\eps$ small. Can one write models on the limit polyhedron to describe the asymptotics of the eigenvalues as $\eps$ tends to zero? Are they defined on the faces or on the edges? How do they depend on the different angles of the polyhedron?

\section*{Acknowledgements}

The work of the second author was supported by the Ministry of Science and Higher Education of Russian Federation (Project 124041500009-8 and agreement 075-15-2025-344 dated 29/04/2025 for Saint Petersburg Leonhard Euler International Mathematical Institute at PDMI RAS).

\bibliography{Bibli}

@preamble{
   "\def\cprime{$'$} "
}

@article{ViLu,
  title={Regular degeneration and boundary layer for linear differential equations with small parameter},
  author={Vishik, M.I. and Lyusternik, L.A.},
  journal={Uspekhi Mat. Nauk},
  volume={12},
  number={5},
  pages={3--122},
  year={1957},
}

@article{BoCF25,
 title		 = "Trapped modes in electromagnetic waveguides",
 author		 = "{Bonnet-Ben Dhia}, {A.-S.} and Chesnel, L. and Fliss, S.",
 journal	 = "arXiv preprint arXiv:2512.17763",
 year		 = "2025"}

@article{Frei07,
  title={{Precise bounds and asymptotics for the first Dirichlet eigenvalue of triangles and rhombi}},
  author={Freitas, P.},
  journal={J. Funct. Anal.},
  volume={251},
  number={1},
  pages={376--398},
  year={2007},
}

@article{FrSo09,
  title={{On the spectrum of the Dirichlet Laplacian in a narrow strip}},
  author={Friedlander, L. and Solomyak, M.},
  journal={Isr. J. Math.},
  volume={170},
  pages={337--354},
  year={2009},
}

@article{BoFr09,
 author = {Borisov, D. and Freitas, P.},
 title = {Singular asymptotic expansions for {Dirichlet} eigenvalues and eigenfunctions of the {Laplacian} on thin planar domains},
 journal = {Ann. Inst. Henri Poincar{\'e}, Anal. Non Lin{\'e}aire},
 volume = {26},
 number = {2},
 pages = {547--560},
 year = {2009},
}

@article{DuEx95,
author = {Duclos, P. and Exner, P.},
title = {Curvature-induced bound states in quantum waveguides in two and three dimensions},
journal = {Rev. Math. Phys.},
volume = {07},
number = {01},
pages = {73-102},
year = {1995}
}

@article{ExSe89,
  title={Bound states in curved quantum waveguides},
  author={Exner, P. and Seba, P.},
  journal={J. Math. Phys.},
  volume={30},
  number={11},
  pages={2574--2580},
  year={1989}  
}

@article{GPRO10,
 title		 = "Bound states in the continuum in graphene quantum dot structures",
 author		 = "Gonz{\'a}lez, {J.W.} and Pacheco, M. and Rosales, L. and Orellana, {P.A.}",
 journal	 = "Europhys. Lett.",
 volume		 = "91",
 number		 = "6",
 pages		 = "66001",
 year		 = "2010",
}

@article{Mois09,
 title		 = "{Suppression of Feshbach resonance widths in two-dimensional waveguides and quantum dots: a lower bound for the number of bound states in the continuum}",
 author		 = "Moiseyev, N.",
 journal	 = "Phys. Rev. Lett.",
 volume		 = "102",
 number		 = "16",
 pages		 = "167404",
 year		 = "2009",
}

@article{SaBR06,
 title		 = "Bound states in the continuum in open quantum billiards with a variable shape",
 author		 = "Sadreev, {A.F.} and Bulgakov, {E.N.} and Rotter, I.",
 journal	 = "Phys. Rev. B",
 volume		 = "73",
 number		 = "23",
 pages		 = "235342",
 year		 = "2006",
}

@article{GoJa92,
  title={Bound states in twisting tubes},
  author={Goldstone, J. and Jaffe, R.L.},
  journal={Phys. Rev. B},
  volume={45},
  number={24},
  pages={14100},
  year={1992},
  publisher={APS}
}

@article{CLMM93,
  title={Multiple bound states in sharply bent waveguides},
  author={Carini, {J.P.} and Londergan, {J.T.} and Mullen, K. and Murdock, {D.P.}},
  journal={Phys. Rev. B},
  volume={48},
  number={7},
  pages={4503},
  year={1993}  
}

@article{NGPNG09,
  title={The electronic properties of graphene},
  author={Neto, C. and Guinea, F. and Peres, N. and Novoselov, {K.S.} and Geim, {A.K.}},
  journal={Rev. Mod. Phys.},
  volume={81},
  number={1},
  pages={109},
  year={2009},  
}

@article{GeNo07,
  title={The rise of graphene},
  author={Geim, {A.K.} and Novoselov, {K.S.}},
  journal={Nature materials},
  volume={6},
  number={3},
  pages={183--191},
  year={2007},  
}

@article{Geim09,
  title={Graphene: status and prospects},
  author={Geim, {A.K.}},
  journal={Science},
  volume={324},
  number={5934},
  pages={1530--1534},
  year={2009},  
}

@article{MoLu68,
  title={An inequality for the reduced wave operator and the justification of geometrical optics},
  author={Morawetz, {C.S.} and Ludwig, D.},
  journal={Commun. Pure Appl. Math.},
  volume={21},
  number={2},
  pages={187--203},
  year={1968},  
}

@book{Bers,
 AUTHOR 	 = "Bers, L. and John, F. and Schechter, M.",
 TITLE		 = "Partial differential equations", 
 PUBLISHER   = "AMS",
 YEAR 		 = "1964", 
}

@article{Rond09,
  title={{The stability for the Cauchy problem for elliptic equations}},
  author={Alessandrini, G. and Rondi, L. and Rosset, E. and Vessella, S.},
  journal={Inverse problems},
  volume={25},
  number={12},
  pages={123004},
  year={2009}  
}

@article{NaSh14,
  title={{Trapped modes in angular joints of 2D waveguides}},
  author={Nazarov, {S.A.} and Shanin, {A.V}},
  journal={Appl. Anal.},
  volume={93},
  number={3},
  pages={572--582},
  year={2014},
}

@book{londergan1999binding,
  title={Binding and scattering in two-dimensional systems: applications to quantum wires, waveguides and photonic crystals},
  author={Londergan, {J.T.} and Carini, {J.P.} and Murdock, {D.P.}},
  volume={60},
  year={1999},
  publisher={Springer Science \& Business Media}
}

@article{ChNa18,
 title		 = "Non reflection and perfect reflection via {Fano} resonance in waveguides",
 author		 = "Chesnel, L. and Nazarov, {S.A.}",
 journal	 = "Comm. Math. Sci.",
 volume		 = "16",
 number		 = "7",
 pages		 = "1779--1800",
 year		 = "2018",
}

@article{ShTu12,
 title		 = "Total resonant transmission and reflection by periodic structures",
 author		 = "Shipman, {S.P.} and Tu, H.",
 journal	 = "SIAM J. Appl. Math.",
 volume		 = "72",
 number		 = "1",
 pages		 = "216--239",
 year		 = "2012",
}

@article{Post05,
  title={{Branched quantum wave guides with Dirichlet boundary conditions: the decoupling case}},
  author={Post, O.},
  journal={J. Phys. A Math. Theor.},
  volume={38},
  number={22},
  pages={4917},
  year={2005},  
}

@book{ExKo15,
  title={Quantum waveguides},
  author={Exner, P. and Kova{\v{r}}{\'\i}k, H.},
  year={2015},
  publisher={Springer}
}

@article{NaRU15,
  title={{Asymptotics of the spectrum of the Dirichlet Laplacian on a thin carbon nano-structure}},
  author={Nazarov, {S.A.} and Ruotsalainen, K. and Uusitalo, P.},
  journal={C. R. M\'ecanique},
  volume={343},
  number={5-6},
  pages={360--364},
  year={2015},
}

@BOOK{MaNaPl,
 title	 	 = "{Asymptotic theory of elliptic boundary value problems in singularly perturbed domains, Vol. 1}",
 author		 = "{Maz'ya}, {V.G.} and Nazarov, {S.A.} and Plamenevski{\u\i}, {B.A.}",
 year 	 	 = "2000",
 publisher	 = "{Birkh\"{a}user}, Basel"
}

@book {Ilin,
    AUTHOR = {Il'in, A. M.},
     TITLE = {Matching of asymptotic expansions of solutions of boundary
              value problems},
    SERIES = {Translations of Mathematical Monographs},
    VOLUME = {102},
 PUBLISHER = {AMS},
   ADDRESS = {Providence, RI},
      YEAR = {1992},
     PAGES = {x+281},
      ISBN = {},
   MRCLASS = {},
  MRNUMBER = {},
}

@book{VD,
 AUTHOR		 = "Van Dyke, M.",
 TITLE		 = "Perturbation methods in fluid mechanics",
 EDITION	 = "",
 PUBLISHER	 = "The Parabolic Press, Stanford, Calif.",
 YEAR		 = "1975",
 PAGES 		 = "xiv+271",
}

@article{ChNa23,
  title={{Spectrum of the Dirichlet Laplacian in a thin cubic lattice}},
  author={Chesnel, L. and Nazarov, {S.A.}},
  journal	 = "Math. Model. Numer. Anal.",
  volume      = "57",  
 pages		 = "3251--3273",
 year		 = "2023"
}

@article{ChNa25,
  title={On the breathing of spectral bands in periodic quantum waveguides with inflating resonators},
  author={Chesnel, L. and Nazarov, {S.A.}},
  journal	 = "Math. Model. Numer. Anal.",
  volume     = "59", 
  number	 = "4",
 pages		 = "2171--2206",
 year		 = "2025"
}

@article{Naza18,
  title={Breakdown of cycles and the possibility of opening spectral gaps in a square lattice of thin acoustic waveguides},
  author={Nazarov, {S.A.}},
  journal={Izv. Math.},
  volume={82},
  number={6},
  pages={1148--1195},
  year={2018},  
  NOTE = "transl. from Izv. Ross. Akad. Nauk, Ser. Mat. 82,6:78--127, 2018",  
}

@article{Naza23,
  title={{On the one-dimensional asymptotic models of thin Neumann lattices}},
  author={Nazarov, {S.A}},
  journal={Sib. Math. J.},
  volume={64},
  number={2},
  pages={356--373},
  year={2023},  
}

@InCollection{MoVa08,
 Author = {Molchanov, S. and Vainberg, B.},
 Title = {Laplace operator in networks of thin fibers: spectrum near the threshold},
 BookTitle = {Stochastic analysing in mathematical physics. Proceedings of a satellite conference of ICM 2006, Lisbon, Portugal, September 4--8, 2006. Selected papers.}, 
 Pages = {69--94},
 Year = {2008},
}

@article {Naza17bis,
 AUTHOR		 = "Nazarov, {S.A.}",
 TITLE		 = "Almost standing waves in a periodic waveguide with resonator, and near-threshold eigenvalues",
 JOURNAL	 = "Algebra i Analiz",
 VOLUME		 = "28",
 YEAR		 = "2016",
 NUMBER		 = "3",
 PAGES		 = "110--160",
 note		 = "(English transl.: Sb. Math. J. 2017. V. 28, N 3. P. 377--410)" 
}

@article {Hech12,
    AUTHOR = "Hecht, F.", 
	TITLE = "New development in FreeFem++",
   JOURNAL = "J. Numer. Math.",     
    VOLUME = "20", 
	YEAR = "2012",
    NUMBER = "3-4", 
	PAGES = "251--265",      
	note = "\url{https://freefem.org/}"
}

@article{Naza12a,
 AUTHOR		 = "Nazarov, {S.A.}",
 TITLE		 = "Discrete spectrum of cranked, branching, and periodic waveguides",
 JOURNAL	 = "Algebra i analiz",
 VOLUME		 = "23",
 number		 = "2",
 YEAR		 = "2011", 
 pages       = "206--247",
 note		 = "(English transl.: St. Petersburg Math. 23:2, 351--379, 2012.)" 
}

@article{Naza14c,
  title={Asymptotics of eigenvalues of the {Dirichlet} problem in a skewed $\mathscr{T}$-shaped waveguide},
  author={Nazarov, {S.A.}},
  journal={Comput. Math. Math. Phys.},
  volume={54},
  pages={811--830},
  year={2014},
}

@article{Rell43,
 Author = {Rellich, F.},
 Title = {{\"U}ber das asymptotische {Verhalten} der {L{\"o}sungen} von {{\(\Delta u+\lambda u=0\)}} in unendlichen {Gebieten}},
 Journal = {Jahresber. Dtsch. Math.-Ver.}, 
 Volume = {53},
 Pages = {57--65},
 Year = {1943}
}

@article{ExL,
 AUTHOR		 = "Exner, P. and Seba, P. and Stovicek, P.",
 TITLE		 = "On existence of a bound state in an {L}-shaped waveguide",
 JOURNAL	 = "Czech J. Phys.",
 VOLUME		 = "39",
 YEAR		 = "1989",  
 pages       = "1181--1191"
}

@article {KaNa00,
 AUTHOR		 = "Kamotskii, {I.V.} and Nazarov, {S.A.}",
 TITLE		 = "On eigenfunctions localized in a neighborhood of the lateral surface of a thin domain",
 JOURNAL	 = "J. Math. Sci.",
 VOLUME		 = "101",
 YEAR		 = "2000",
 NUMBER		 = "2",
 PAGES		 = "2941--2974"
}

@article{DaRa12,
  title={Plane waveguides with corners in the small angle limit},
  author={Dauge, M. and Raymond, N.},
  journal={J. Math. Phys.},
  volume={53},
  number={12},
  pages={123529},
  year={2012},  
}

@book{RS78,
  title={Methods of modern mathematical physics. IV. Analysis of operators},
  author={Reed, M. and Simon, B.},
  year={1978},
  publisher={Academic Press, New-York}
}

@BOOK{Lad,
  AUTHOR      = "Ladyzhenskaya, {O.A.}",
  TITLE       = "The Boundary Value Problems of Mathematical Physics",
  YEAR        = "1973",
  SERIES      = "",
  VOLUME      = "", 
  PUBLISHER   = "Nauka",
  ADDRESS     = "Moscow",
  note		 = "(English transl.: Springer-Verlag, New York, 1985)" 
}

@article{Naza16,
  title={Transmission conditions in one-dimensional model of a rectangular lattice of thin quantum waveguides},
  author={Nazarov, {S.A.}},
  journal={J. Math. Sci.},
  volume={219},
  number={6},
  pages={994--1015},
  year={2016},  
}

@article{ABGM91,
  title={Quantum bound states in open geometries},
  author={Avishai, Y. and Bessis, D. and Giraud, {B.G.} and Mantica, G. },
  journal={Phys. Rev. B},
  volume={15},
  number={15},
  pages={8028--8034},
  year={1991},  
}

@article{Naza14,
  title={Bounded solutions in a {T}-shaped waveguide and the spectral properties of the Dirichlet ladder},
  author={Nazarov, {S.A.}},
  journal={Comput. Math. Math. Phys.},
  volume={54},
  number={8},
  pages={1261--1279},
  year={2014},  
}

@article{Grie08,
  title={Spectra of graph neighborhoods and scattering},
  author={Grieser, D.},
  journal={Proc. Lond. Math. Soc.},
  volume={97},
  number={3},
  pages={718--752},
  year={2008},  
}

@article{MoVa07,
  title={Scattering solutions in networks of thin fibers: small diameter asymptotics},
  author={Molchanov, S. and Vainberg, B.},
  journal={Commun. Math. Phys.},
  volume={273},
  number={2},
  pages={533--559},
  year={2007},  
}

@book{Post12,
  title={Spectral analysis on graph-like spaces},
  author={Post, O.},
  volume={2039},
  year={2012},  
  publisher={Springer Science \& Business Media}
}

@book{BeKu13,
  title={Introduction to quantum graphs},
  author={Berkolaiko, G. and Kuchment, P.},
  volume={186},
  year={2013},  
  Publisher = {Providence, RI: American Mathematical Society (AMS)},
}

@article{Kuch02,
  title={Graph models for waves in thin structures},
  author={Kuchment, P.},
  journal={Waves in random media},
  volume={12},
  number={4},
  pages={R1},
  year={2002},  
}

@article{Naza17,
  title={The spectra of rectangular lattices of quantum waveguides},
  author={Nazarov, {S.A.}},
  journal={Izv. Math.},
  volume={81},
  number={1},
  pages={29},
  year={2017},
}

@article{Pank17,
  title={Eigenvalue inequalities and absence of threshold resonances for waveguide junctions},
  author={Pankrashkin, K.},
  journal={J. Math. Anal. Appl.},
  volume={449},
  number={1},
  pages={907--925},
  year={2017},  
}

@book {BiSo87,
 AUTHOR		 = "Birman, {M.Sh.} and Solomjak, {M.Z.}",
 TITLE		 = "Spectral theory of selfadjoint operators in {H}ilbert space",
 SERIES		 = "Mathematics and its Applications (Soviet Series)",
 PUBLISHER	 = "D. Reidel Publishing Co.",
 ADDRESS	 = "Dordrecht",
 YEAR		 = "1987",
 PAGES		 = "xv+301",
}

@article{Naza20Threshold,
  title={Threshold resonances and virtual levels in the spectrum of cylindrical and periodic waveguides},
  author={Nazarov, {S.A.}},
  journal={Izv. Math.},
  volume={84},
  number={6},
  pages={1105},
  year={2020},  
}

@article{KoNS16,
 title		 = "Stabilizing solutions at thresholds of the continuous spectrum and anomalous transmission of waves",
 author		 = "Korolkov, {A.I.} and Nazarov, {S.A.} and Shanin, {A.V.}",
 journal	 = "Z. Angew. Math. Mech.",
 volume		 = "96",
 number		 = "10",
 pages		 = "1245--1260",
 year		 = "2016"
}

@book {CoKr98,
 AUTHOR 	 = "Colton, D. and Kress, R.",
 TITLE		 = "Inverse acoustic and electromagnetic scattering theory",
 SERIES		 = "Applied Mathematical Sciences",
 VOLUME		 = "93",
 EDITION	 = "Second",
 PUBLISHER  = "Springer-Verlag",
 ADDRESS	 = "Berlin",
 YEAR 		 = "1998",
 PAGES		 = "xii+334"
}

@book{Henr06,
 title		 = "Extremum problems for eigenvalues of elliptic operators",
 author		 = "Henrot, A.",
 year		 = "2006",
 publisher	 = "{Birkh\"{a}user}"}

@ARTICLE{Kond67,
 AUTHOR  = "{Kondratiev}, {V.A.}",
 TITLE   = "Boundary-value problems for elliptic equations in domains with conical or angular points",
 JOURNAL = "Trans. Moscow Math. Soc.",
 YEAR    = "1967",
 VOLUME  = "16",
 PAGES   = "227-313"}

@Book{Kato95,
    Author = {Kato, T.},
    Title = {{Perturbation theory for linear operators.}},
    Edition = {Reprint of the corr. print. of the 2nd ed. 1980},
    Pages = {xxi + 619},
    Year = {1995},
 	publisher= {Springer-Verlag},
 address = {Berlin}
}

@BOOK{KoMR97,
  AUTHOR 	  = {Kozlov, {V.A.} and Maz'ya, {V.G.} and Rossmann, J.},
  TITLE       = {Elliptic Boundary Value Problems in Domains with Point Singularities},
  YEAR        = {1997},
  SERIES      = {Mathematical Surveys and Monographs},
  VOLUME      = {52}, 
  PUBLISHER   = {AMS},
  ADDRESS     = {Providence}}

@book{LiMa68,
 author 	 = "J.-L. Lions and E. Magenes",
 title 		 = "{Probl\`{e}mes} aux limites non {homog\`{e}nes} et applications",
 year  		 = "1968",
 publisher	 = "Dunod"}

@BOOK{NaPl94,
  AUTHOR      = "Nazarov, {S.A.} and Plamenevski{\u\i}, {B.A.}",
  TITLE       = "Elliptic problems in domains with piecewise smooth boundaries",
  YEAR        = "1994",
  SERIES      = "Expositions in Mathematics",
  VOLUME      = "13", 
  PUBLISHER   = "De Gruyter",
  ADDRESS     = "Berlin, Germany"
}
\bibliographystyle{plain}
\end{document}